\newtheorem{theorem}{Theorem}[section]
\newtheorem{definition}[theorem]{Definition}
\newtheorem{proposition}[theorem]{Proposition}
\newtheorem{lemma}[theorem]{Lemma}
\newtheorem{corollary}[theorem]{Corollary}
\newtheorem{remark}[theorem]{Remark}
\def\endproof{\ \hfill\hbox{\vbox{\hrule\hbox{\vrule
height5pt\kern5pt\vrule height5pt}\hrule}}\par\medskip\rm}
\oddsidemargin \pagestyle{myheadings}
\newcommand{\be}{\begin{equation}}
\newcommand{\ee}{\end{equation}}
\title{{\bf On the dynamics of WKB wave functions whose phase are weak KAM solutions of H-J equation 
}
}
\author{Thierry Paul\footnote{CNRS and CMLS, Ecole Polytechnique 
(Palaiseau), thierry.paul@math.polytechnique.fr} 
\quad Lorenzo Zanelli\footnote{CMLS, Ecole Polytechnique 
(Palaiseau), lorenzo.zanelli@ens.fr}}
\begin{document}

\maketitle

\begin{abstract}
\noindent
In the framework of  toroidal Pseudodifferential operators on the flat torus $\Bbb T^n := (\Bbb R / 2\pi \Bbb Z)^n$ we begin by proving the closure under composition for the class of  Weyl  operators  $\mathrm{Op}^w_\hbar(b)$ with simbols  $b \in S^m (\mathbb{T}^n \times \mathbb{R}^n)$. Subsequently, we consider  $\mathrm{Op}^w_\hbar(H)$ when $H=\frac{1}{2} |\eta|^2 + V(x)$ where $V \in C^\infty (\Bbb T^n;\Bbb R)$  and we exhibit the toroidal version of the equation for the Wigner transform of the solution of the Schr\"odinger equation. Moreover, we prove the convergence (in a weak sense)  of the Wigner transform of the solution of the Schr\"odinger equation to the solution of the Liouville equation on $\Bbb T^n \times \Bbb R^n$ written in the measure sense. These results are applied to the study of some WKB type wave functions in the Sobolev space $H^{1} (\mathbb{T}^n; \Bbb C)$ with phase functions in the class of Lipschitz continuous weak KAM solutions (of positive and negative type) of the Hamilton-Jacobi equation $\frac{1}{2} |P+ \nabla_x v_\pm (P,x)|^2 + V(x) = \bar{H}(P)$ for $P \in \ell \Bbb Z^n$ with $\ell >0$,  and to the study of the backward and forward time propagation of the related Wigner measures supported on the graph of  $P+ \nabla_x v_\pm$.
\end{abstract}

\bigskip

{\bf {\small Keywords}:}   Toroidal Pseudodifferential operators, Wigner measures,  Hamilton-Jacobi equation.

\newpage 

\tableofcontents


\lhead{}  \chead{}

\lhead{}

\renewcommand{\headrulewidth}{0.5 pt} 

\section{Introduction}  \markboth{Introduction}{}
In this paper we study WKB type wave functions on flat torus $\Bbb T^n := (\Bbb R / 2\pi \Bbb Z)^n$, namely  functions of the form
\be\label{def1}
\psi(x) = a(x) e^{iS(x)/\hbar},\  x\in\mathbb T^n,\  n\geq 1
\ee
where $a = a_{\hbar,P}$ is a family of  functions in $L^2 (\mathbb T^n;\Bbb R)$ and $S(x) = P \cdot x+v(x),\ P \in \ell \Bbb Z^n$, $\ell > 0$,  $\hbar^{-1} \in \ell^{-1} \Bbb N$,  the phase $v(x)=v(P,x)$ is a Lipschitz continuous weak KAM solution of the  stationary Hamilton-Jacobi equation 
\begin{equation}
\label{def-eff0-intro}
H(x,P+\nabla_x v(P,x))  =  \bar{H}(P), 
\end{equation}
for Hamitonian  $H(x,\xi) := \frac{1}{2} |\xi|^2 + V(x)$,   $V \in C^\infty (\mathbb{T}^n)$,  see Section \ref{sub-wHJ} for precise definitions.

It is well known that in the case where $v$ is a regular function,  the wave function $\psi$ is, under general conditions on the family $a=a_{\hbar,P}$, a Lagrangian distribution associated to the Lagrangian manifold $\Lambda_P :=\{(x,\eta) \in \Bbb T^n \times \Bbb R^n,\ \eta = P +\nabla_x v(P,x)\}$. Therefore it has an associated monokinetic  Wigner measure of the form
\be\label{mono1}
dw(x,\eta)=\delta(\eta-(P+\nabla_x v(P,x))) |a_0(x)|^2 dx  .
\ee
Moreover it remains of the same type under propagation through the Schr\"odinger equation whose quantum Hamiltonian is the quantization of the function $H(x,\xi)$ (see Section \ref{Weyl} for details on the toroidal quantization) leadding to a Wigner measure 
\be\label{wig2}
dw^t (x,\eta) =  \delta(\eta-(P+\nabla_x v(P,x))) |a_0^t(x)|^2dx
\ee
where $|a_0(x)|^2$ satisfies a transport equation in such a way that $dw^t$ is the pushforward of $dw$ by the Hamiltonian flow of $H$.

The goal of this paper is to show what remains of this construction in the case where $v$ is a solution of \eqref{def-eff0-intro} with  only a Lipschitz continuity property, a regularity far for being used in the framework of standard microlocal analysis. 

Note that propagation of monokinetic Wigner measures with low regularity momentum profiles and application to the classical limit of propagation of WKB type wave functions have been recently studied in \cite{bgmp}. The regularity assumption in \cite{bgmp} is much stronger than ours, but at the contrary the construction in \cite{bgmp} works for any profile with a given regularity as we need our phase function to be a solution of the Hamilton-Jacobi equation. Therefore the two papers are complementary.

The precise definition of our WKB state, especially of the amplitude in \eqref{def1}, is given in Section \ref{SEC-wkb}, Definition \ref{wave-d} where a family of examples are given in the remark \ref{EX-a} following the definition. 

Note that WKB states on the torus with phase functions issued form weak KAM theory have been used in \cite{E1}, \cite{E2} where it has been studied $L^2$-energy quasimode estimates. In \cite{P-Z} a class of WKB states on the torus with regularized phase function have been defined in such a way the associated Wigner measures are coinciding with the Legendre transform of the so-called Mather measures. 

In the present paper we will work with the true solution of Hamilton-Jacobi equation for the phase and will use a kind of regularization for the amplitude, as no canonical function choice is offered for the latter out 
weak KAM theory.
 
 Our first main result concerns the Wigner measure $w$, as defined in Section \ref{wig1}, Definition \ref{def-wc1}, associated to our family of WKB states. It claims, Theorem \ref{TH4}, that $w$ is as expected monokinetic in the sense that it has the form
\be
\label{bof}
dw(x,\eta) =  \delta(\eta - (P+\nabla_x v(P,x))) dm_P (x)
\ee
 where the limit in the measure sense $dm_P (x) = \lim\limits_{\hbar\to 0} |a_{\hbar,P}(x)|^2dx$ exists by Definition \ref{wave-d}. In fact, we also assume that 
 $dm_P \ll \pi_\star(dw_P) =: d\sigma_P$ where $dw_P$ is the Legendre transform of a Mather $P$-minimal measure (see Section \ref{sec-M}).
 This setting implies that any measure $dw(x,\eta)$ as in (\ref{bof}) is asbolutely continuous to $dw_P$ itself, as shown in Lemma \ref{Lem-ac}. We also underline that $d\sigma_P$ solves the continuity equation
 \begin{equation}
 \label{cont-0}
 0 = \int_{\Bbb T^n } \nabla_x f(x) \cdot (P+ \nabla_x v(P,x)) \, d\sigma_P (x) \quad \forall f \in C^\infty (\Bbb T^n), 
 \end{equation}
and this  can be interpreted as the result of an asymptotic  free current density condition for the wave functions $\psi$ of type (\ref{def1}), as we show in Proposition \ref{prop46}. We recall that in the usual construction of WKB wave functions (working with integrability or almost-integrability assumptions on $H$) the determination of the amplitude function $a(x)$ is related to the solution of the continuity equation (\ref{cont-0}) written in the strong sense for the function $\sigma(x) = a^2 (x)$, namely ${\rm div}_x [ (P+ \nabla_x v(P,x)) \sigma(x) ] = 0$.
 
The above assumption on $dm_P$ together with the monokinetic form of $dw_P$ on the graph  of a weak KAM solution of the Hamilton-Jacobi equation allow to  study very much easily the time propagation of such measures, which remains of monokinetic type. This is in fact our second main result, which deals with the classical limit of the Wigner transform of the evolved WKB state.  It is contained within Theorem \ref{th51} and Proposition \ref{prop53} where the propagation, 
\be
\label{bof2}
d w^t (x,\eta) = \delta(\eta - (P+\nabla_x v(P,x)))  g(t,P,x) dm_P (x) 
\ee
both forward and  backward  (they are different in our situation) in time is exhibited. \\

The paper is organized as follows: Section \ref{preli} is devoted to some preliminaries concerning the Weyl quantization on the torus (\ref{Weyl}) and the weak KAM theory (\ref{kam1}). Section \ref{wtd1} concerns the dynamics of the Wigner transform on the  torus and Section \ref{scwt} the classical limit of the Wigner transform, including the Section \ref{SEC-wkb} where the monokinetic property of the Wigner function of our WKB state is established. Its propagtion is studied in the final Section \ref{pwt}.


\noindent
\section{Preliminaries}\label{preli}  \markboth{Preliminaries}{}

\subsection{The Weyl quantization on the torus}
\label{Weyl}
\subsubsection{Settings}
Let us consider the flat torus $\Bbb T^n := (\Bbb R / 2\pi \Bbb Z)^n$.  
The  class of   symbols $b \in S^m_{\rho, \delta} (\mathbb{T}^n \times \mathbb{R}^n)$, $m \in \mathbb{R}$, $0 \le \delta$, $\rho \le1$, consisting of those functions  
in $C^\infty (\mathbb{T}^n \times \mathbb{R}^n;\Bbb R)$ which are  $2\pi$-periodic in $x$ (that is, in each variable $x_j$, $1\leq j\leq n$) and for which
for all $\alpha, \beta \in \mathbb{Z}_+^n$ there exists $C_{\alpha \beta} >0$ such that $\forall$ $(x,\eta) \in \mathbb{T}^n \times \mathbb{R}^n$
\begin{equation}
\label{symb00}
|  \partial_x^\beta \partial_\eta^\alpha  b (x,\eta) |   \le  C_{\alpha \beta m} \langle \eta \rangle^{m- \rho |\alpha| + \delta |\beta|}
\end{equation}
where $\langle\eta\rangle:=(1+|\eta|^2)^{1/2}$. In particular,  the set $S^m_{1,0} (\mathbb{T}^n \times \mathbb{R}^n)$ is denoted by $S^m (\mathbb{T}^n \times \mathbb{R}^n)$. 
\\
The  toroidal Pseudodifferential Operator associated to  $b \in S^m (\mathbb{T}^n \times \mathbb{R}^n)$ reads 
\begin{equation}
b(X,D) \psi(x):=(2\pi)^{-n}\sum_{\kappa \in\mathbb{Z}^n}\int_{\mathbb{T}^n}e^{i\langle x-y,\kappa \rangle}b(y,\kappa)\psi(y)dy, \quad \psi \in C^\infty (\mathbb{T}^n;\Bbb C), 
\end{equation}
see  \cite{R-T}. In particular, we have a map $b(X,D) : C^\infty (\mathbb{T}^n) \longrightarrow \mathcal{D}^\prime  (\mathbb{T}^n)$.  We recall that  $u \in \mathcal{D}^\prime  (\mathbb{T}^n)$ are the linear maps $u: C^\infty (\mathbb{T}^n) \longrightarrow \Bbb C$ such that $\exists$ $C>0$ and $k \in \Bbb N$, for which $|u(\phi)| \le C \sum_{|\alpha|\le k} \| \partial_x^\alpha \phi  \|_\infty$ $\forall \phi \in C^\infty (\Bbb T^n)$, see for example Definition 2.1.1 of \cite{Ho}. 
Given a symbol $b\in S^m(\mathbb{T}^n\times\mathbb{R}^n)$, the (toroidal) Weyl quantization reads
\begin{equation}
\label{weyl}
\mathrm{Op}^w_\hbar(b)\psi(x) := (2\pi)^{-n}\sum_{\kappa \in\mathbb{Z}^n}\int_{\mathbb{T}^n}e^{i\langle x-y,\kappa \rangle}b(y,\hbar\kappa/2)\psi(2y-x)dy,\,\,\,\, \psi\in C^\infty(\mathbb{T}^n).
\end{equation}
Hence, it  follows 
\begin{equation}
\label{eq-O}
\mathrm{Op}^w_{\hbar} (b)  \psi (x) = (b(X,\frac{\hbar}{2}D) \circ T_x \, \psi )(x)
\end{equation}
where $T_x : C^\infty (\mathbb{T}^n) \rightarrow C^\infty (\mathbb{T}^n)$ defined as $(T_x \psi) (y) := \psi (2y-x)$  is linear, invertible and $L^2$-norm preserving.
Starting from quantization in (\ref{weyl}), we  now introduce the  Wigner transform $W_\hbar \psi$  by 
\begin{equation}
W_\hbar \psi (x,\eta) := (2\pi)^{-n}\int_{\Bbb{T}^n} e^{2 \frac{i}{\hbar} \langle z,\eta\rangle}\psi(x-z) \psi^\star (x+z)dz, \quad \eta \in \frac{\hbar}{2} \Bbb Z^n,
\label{WignerT}
\end{equation}
which is well defined also for $\psi \in L^2 (\Bbb T^n)$. 	
For $b \in S^m (\mathbb{T}^n \times \mathbb{R}^n)$ the Wigner distribution reads
\begin{equation}
\label{med4}
\langle \psi , {\rm Op}^w_\hbar (b) \psi\rangle  =  \sum_{\eta \in \frac{\hbar}{2} \mathbb{Z}^n} \int_{\mathbb{T}^n} b(x,\eta) W_\hbar \psi(x,\eta)dx, \quad \psi \in C^\infty (\Bbb T^n).
\end{equation}
For $b \in S^0 (\mathbb{T}^n \times \mathbb{R}^n)$ and  $\psi \in L^2 (\Bbb T^n)$, the mean value $\langle \psi , {\rm Op}^w_\hbar (b) \psi\rangle_{L^2(\Bbb{T}^n)}$ is well defined  thanks to the $L^2$ - boundedness estimate of ${\rm Op}^w_\hbar (b)$,  see Theorem  \ref{Th-Bound0}.

\begin{remark}
Before to recall the notion of toroidal symbols and toroidal amplitudes, we need  first to remind the notion of partial difference operator $\triangle$. Given $f : \Bbb Z^n_\kappa \longrightarrow \Bbb C$, it is defined the 
\begin{equation}
\triangle_{\kappa_j} f(\kappa) := f(\kappa + e_j) - f(\kappa) 
\end{equation}
where $e_j \in \Bbb N^n$, $(e_j)_j =1$ and  $(e_j)_i =1$ if $i \neq j$. The composition  provide $\triangle_{\kappa}^\alpha f(\kappa) := \triangle_{\kappa_1}^{\alpha_1} f(\kappa)...\triangle_{\kappa_n}^{\alpha_n} f(\kappa)$ for any $\alpha \in \Bbb N_0^n$.
We recall now that  toroidal simbols $\widetilde{b} \in S^m_{\rho, \delta} (\mathbb{T}^n \times \mathbb{Z}^n)$, $m \in \mathbb{R}$, $0 \le \delta$, $\rho \le1$, are those functions  
which are smooth in $x$ for all $\kappa \in \Bbb Z^n$, $2\pi$-periodic in $x$ and for which
for all $\alpha, \beta \in \mathbb{Z}_+^n$ there exists $C_{\alpha \beta m} >0$ such that $\forall$ $(x,\kappa) \in \mathbb{T}^n \times \mathbb{Z}^n$
\begin{equation}
\label{symb-D}
|  \partial_x^\beta \triangle_\kappa^\alpha \  \widetilde{b} (x,\kappa) |   \le  C_{\alpha \beta m} \langle \kappa \rangle^{m- \rho |\alpha| + \delta |\beta|}
\end{equation}
where $\langle \kappa \rangle:=(1+|\kappa|^2)^{1/2}$. As usually,  $S^m (\mathbb{T}^n \times \mathbb{Z}^n)$ stands for $S^m_{1,0} (\mathbb{T}^n \times \mathbb{Z}^n)$. In the same way, it is defined the set of toroidal amplitudes $S^m_{\rho, \delta} (\mathbb{T}^n \times \Bbb T^n \times \mathbb{Z}^n)$.
\\
The link between this class of simbols and the euclidean ones $S^m_{\rho, \delta} (\mathbb{T}^n \times \mathbb{R}^n)$ is shown within Theorem 5.2 in \cite{R-T}. More precisely, for  any $\widetilde{b} \in S^m_{\rho, \delta} (\mathbb{T}^n \times \mathbb{Z}^n)$ there exists $b \in S^m_{\rho, \delta} (\mathbb{T}^n \times \mathbb{R}^n)$   such that $\widetilde{b} = b|_{\mathbb{T}^n \times \mathbb{Z}^n}$,  and conversely for any $b$ there exists $\widetilde{b}$ such that this restriction holds true. Moreover, the extended simbol is unique modulo a function in $S^{-\infty} (\mathbb{T}^n \times \mathbb{R}^n)$.  
\end{remark}

\begin{remark}
In \cite{G-P}  it is  considered the  phase space Fourier representation, 
\begin{equation}
\label{pf-T}
b (x,\eta) = F (\widehat{b})  :=  (2\pi)^{-n} \int_{\mathbb{R}^n} \sum_{q \in \mathbb{Z}^n}    \widehat{b} (q,p) e^{ i (\langle p , \eta \rangle + \langle q , x \rangle)}   dp, \quad \quad (q,p) \in \mathbb{Z}^n \times \mathbb{R}^n, \quad (x,\eta)  \in \Bbb T^n \times \Bbb R^n, 
\end{equation}
(in the sense of  distributions) and the operator 
$
U_\hbar (q,p) \psi (x) := e^{i (q \cdot x + \hbar p \cdot q / 2) } \psi (x+\hbar p)
$ 
which is well defined on $L^{2} (\Bbb T^n)$ for any fixed $(q,p) \in \mathbb{Z}^n \times \mathbb{R}^n$. 
In this framework, the Weyl quantization of a simbol $b\in S^m(\mathbb{T}^n\times\mathbb{R}^n)$  is given by
\begin{equation}
\label{Q2}
{\rm Op}_\hbar^w (b) \psi (x)   := (2\pi)^{-n} \int_{\mathbb{R}^n}   \sum_{q \in \mathbb{Z}^n} \widehat{b} (q,p) U_\hbar (q,p) \psi(x)  dp.
\end{equation}
Consequently, the corresponding  Wigner transform and Wigner distribution are
\begin{eqnarray}
\label{med5}
\widehat{W}_\hbar \psi (q,p)  &:=& \langle   \psi ,  U_\hbar (q,p) \psi \rangle_{L^2}. 
\\
\langle \psi, {\rm Op}^w_\hbar (b) \psi \rangle &:=&\int_{\mathbb{R}^n}  \sum_{q \in \mathbb{Z}^n}   \widehat{b} (q,p) \widehat{W}_\hbar \psi (q,p) dp. 
\label{Wigner2}
\end{eqnarray}
In fact, the  Weyl quantizations as in (\ref{weyl}) and (\ref{Q2})  are coinciding (see  Proposition 2.3 in \cite{P-Z}).
\end{remark}

\subsubsection{Composition and Boundedness for Weyl operators}
In the following we recall a result on $L^2(\Bbb T^n)$-boundedness for a class of operators involved in our paper. 

\begin{theorem}[see \cite{G-P}]
\label{Th-Bound0}
Let  ${\rm Op}^w_\hbar (b)$ as in  (\ref{Q2}) with $b \in S^0_{0,0} (\mathbb{T}^n \times \mathbb{R}^n)$. Let $N = n/2 +1$ when $n$ is even, $N = (n+1)/2 +1$ when $n$ is odd. Then, for $\psi \in C^\infty (\mathbb{T}^n)$
\begin{equation}
\|  {\rm Op}_\hbar^w (b)  \psi \|_{L^2(\Bbb T^n)} \le \frac{2^{n+1}}{n+2} \ \frac{  \pi^{(3n-1)/2}}{\Gamma ((n+1)/2)}  \sum_{|\alpha| \le 2N}  \,    \|  \partial_x^\alpha b \|_{L^\infty (\Bbb T^n \times \Bbb R^n)}    \ \| \psi \|_{L^2 (\Bbb T^n)}. 
\end{equation}
\end{theorem}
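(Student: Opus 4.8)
The plan is to realise \({\rm Op}^w_\hbar(b)\) as an infinite matrix acting on Fourier coefficients and to estimate that matrix by Schur's test; this is the toroidal form of the Calderón--Vaillancourt theorem, and it is considerably softer than the Euclidean one because the variable dual to \(x\in\mathbb T^n\) is the \emph{discrete} lattice \(\mathbb Z^n\). Concretely, expand \(\psi=\sum_{l\in\mathbb Z^n}\widehat\psi(l)e^{il\cdot x}\) and insert this into (\ref{weyl}) (equivalently (\ref{Q2})); carrying out the \(y\)-integral and relabelling indices, one finds that in the exponential basis \({\rm Op}^w_\hbar(b)\) has matrix entries
\[
B_{ml}\;=\;\widehat b_x\!\Big(m-l,\ \tfrac{\hbar}{2}(m+l)\Big),\qquad m,l\in\mathbb Z^n,
\]
where \(\widehat b_x(j,\eta):=(2\pi)^{-n}\int_{\mathbb T^n}b(x,\eta)e^{-ij\cdot x}\,dx\) are the Fourier coefficients of \(b\) in the \(x\)-variable. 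The decisive structural point is that the frequency slot of the symbol is sampled only on the discrete set \(\tfrac{\hbar}{2}\mathbb Z^n\) and is merely carried along, so that the coupling between input and output frequencies \(m\neq l\) is controlled entirely by the smoothness of \(b\) in the compact variable \(x\). This is exactly why the final estimate will involve only \(\partial_x\)-derivatives of \(b\) and no \(\partial_\eta\)-derivatives.

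The second step is to produce off-diagonal decay in \(m-l\) by integrating by parts in \(x\): since \((1-\Delta_x)^N e^{-ij\cdot x}=\langle j\rangle^{2N}e^{-ij\cdot x}\), one obtains
\[
|B_{ml}|\;=\;\langle m-l\rangle^{-2N}\,\Big|(2\pi)^{-n}\!\int_{\mathbb T^n}\!\big[(1-\Delta_x)^N b\big]\big(x,\tfrac{\hbar}{2}(m+l)\big)\,e^{-i(m-l)\cdot x}\,dx\Big|\;\le\; c_n\,\langle m-l\rangle^{-2N}\!\!\sum_{|\alpha|\le 2N}\|\partial_x^\alpha b\|_{L^\infty(\mathbb T^n\times\mathbb R^n)},
\]
uniformly in \(m,l\) (here the supremum over the frequency argument \(\tfrac{\hbar}{2}(m+l)\) is finite precisely because \(\delta=0\), and it is dominated by the \(L^\infty\)-norm appearing on the right), with \(c_n\) the explicit constant coming from the multinomial expansion of \((1-\Delta_x)^N\). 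Schur's test then gives
\[
\|{\rm Op}^w_\hbar(b)\|_{L^2\to L^2}=\|(B_{ml})\|_{\ell^2\to\ell^2}\le\Big(\sup_m\sum_l|B_{ml}|\Big)^{1/2}\Big(\sup_l\sum_m|B_{ml}|\Big)^{1/2}\le c_n\Big(\sum_{j\in\mathbb Z^n}\langle j\rangle^{-2N}\Big)\sum_{|\alpha|\le 2N}\|\partial_x^\alpha b\|_{L^\infty},
\]
and since \(\widehat\psi\) decays rapidly for \(\psi\in C^\infty(\mathbb T^n)\) all the interchanges of summation are legitimate (the case \(\psi\in L^2\) follows by density). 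The lattice sum \(\sum_{j\in\mathbb Z^n}\langle j\rangle^{-2N}\) converges precisely when \(2N>n\), which forces \(N\) to be the smallest admissible integer in the even case, with one extra unit taken in the odd case.

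The genuinely delicate part is the one I have been glossing over: extracting the \emph{explicit} constant \(\tfrac{2^{n+1}}{n+2}\,\tfrac{\pi^{(3n-1)/2}}{\Gamma((n+1)/2)}\). For this I would (i) dominate the lattice sum by an integral with an explicit comparison constant, \(\sum_{j\in\mathbb Z^n}\langle j\rangle^{-2N}\le C_n\int_{\mathbb R^n}\langle\xi\rangle^{-2N}\,d\xi=C_n\,\pi^{n/2}\,\Gamma(N-\tfrac{n}{2})/\Gamma(N)\), and substitute the chosen value of \(N\); and (ii) track every normalisation factor — the \((2\pi)^{-n}\) of the quantisation, the change of variables \(z=2y-x\) in the Weyl ordering together with the unfolding of \(\psi\) over one fundamental domain, the constant \(c_n\) from the integrations by parts, and, if one argues from (\ref{Q2}) rather than (\ref{weyl}), the extra powers of \(\pi\) carried by the symplectic Fourier transform \(\widehat b\). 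The \(L^2\)-boundedness itself is routine once the matrix picture is set up; it is this bookkeeping of the sharp constant, following \cite{G-P}, that constitutes the actual work and the only real obstacle.
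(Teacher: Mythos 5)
The paper does not actually prove this theorem: it is imported verbatim from \cite{G-P}, so there is no internal proof to compare against. Judged on its own terms, your argument is the natural toroidal Calder\'on--Vaillancourt proof and it is essentially correct. The matrix-element formula checks out: applying (\ref{weyl}) to $e^{il\cdot x}$ and performing the $y$-integral for each fixed $\kappa$ gives the coefficient of $e^{im\cdot x}$ as $\widehat b_x\bigl(m-l,\tfrac{\hbar}{2}(m+l)\bigr)$, the integration by parts on the (boundaryless) torus produces the $\langle m-l\rangle^{-2N}$ decay uniformly in the frequency slot because only $\sup_\eta$ of the $x$-derivatives is needed, and $2N>n$ holds for both stated choices of $N$, so Schur's test closes. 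Your structural observation — that the discreteness of the dual lattice is exactly why no $\eta$-derivatives appear in the bound, in contrast with the Euclidean Calder\'on--Vaillancourt theorem — is the right one and matches the form of the stated estimate. (A minor point worth a sentence in a written-up version: the $\kappa$-sum in (\ref{weyl}) is not absolutely convergent for $b\in S^0_{0,0}$, but once the $y$-integral is done first the resulting coefficients decay rapidly in $m-l$, so the term-by-term identification of $B_{ml}$ is legitimate.)

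The one genuine gap is the one you flag yourself: the theorem asserts a \emph{specific} numerical constant $\tfrac{2^{n+1}}{n+2}\,\tfrac{\pi^{(3n-1)/2}}{\Gamma((n+1)/2)}$, and your argument as written delivers only ``some explicit dimensional constant'' times $\sum_{|\alpha|\le 2N}\|\partial_x^\alpha b\|_{L^\infty}$. Since the lattice sum $\sum_{j\in\mathbb Z^n}\langle j\rangle^{-2N}$ has no closed form, reproducing that exact prefactor requires a particular integral-comparison and bookkeeping scheme (the one carried out in \cite{G-P}), which you outline but do not execute; a different comparison would give a different, possibly larger, constant. For the purposes of this paper the gap is immaterial — everything downstream (Proposition \ref{Prop-T}, Theorem \ref{TH41}) uses only $\hbar$-uniform $L^2$-boundedness with the norm controlled by finitely many $x$-derivatives of the symbol, which your proof does establish — but as a proof of the theorem \emph{as stated} it is incomplete at exactly the point you identify.
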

\noindent
By using standard arguments (such as Hahn-Banach Theorem, see for example \cite{R-S}) the above class of operators  can be extended as  bounded linear operators on $L^2 (\mathbb{T}^n)$.This is the toroidal counterpart of the well known Calderon-Vaillancourt Theorem for Pdo on $\Bbb R^n$ (see for example \cite{Mar}).\\

We devote now our attention to the composition of these toroidal operators (see also  \cite{G-P}, for a similar result involving a smaller class of simbols).
\begin{theorem}
\label{Th-comp}
Let  $\ell,m \in \Bbb R$, $a \in S^\ell (\mathbb{T}^n \times \mathbb{R}^n)$  and  $b \in S^m (\mathbb{T}^n \times \mathbb{R}^n)$. Then, 
\begin{equation}
\label{comp0}
{\rm Op}^w_\hbar (a) \circ {\rm Op}^w_\hbar (b)  =  {\rm Op}^w_\hbar (  a \sharp b    )
\end{equation} 
where $a \sharp b = a \cdot b + O(\hbar)$ in $S^{\ell+m} (\mathbb{T}^n \times \mathbb{R}^n)$.  Moreover,
\begin{equation}
\label{comm1}
[{\rm Op}^w_\hbar (a) , {\rm Op}^w_\hbar (b)]  =  {\rm Op}^w_\hbar (  a \sharp b - b \sharp a  )
\end{equation} 
where the Moyal bracket reads $\{ a , b \}_{ {\rm M} } := a \sharp b - b \sharp a   = - i\hbar  \{  a , b \} + O(\hbar^2) $ in $S^{\ell+m-1} (\mathbb{T}^n \times \mathbb{R}^n)$.
\end{theorem}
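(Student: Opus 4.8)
The plan is to work in the phase-space Fourier (Heisenberg) representation \eqref{Q2}, so that the composition law is governed by the multiplication rule of the unitaries $U_\hbar(q,p)$. \textbf{Step 1 (the cocycle).} A direct computation from $U_\hbar(q,p)\psi(x) = e^{i(q\cdot x+\hbar p\cdot q/2)}\psi(x+\hbar p)$ gives, for $(q_1,p_1),(q_2,p_2)\in\mathbb{Z}^n\times\mathbb{R}^n$,
\be
U_\hbar(q_1,p_1)\,U_\hbar(q_2,p_2) = e^{\frac{i\hbar}{2}(p_1\cdot q_2 - q_1\cdot p_2)}\,U_\hbar(q_1+q_2,\,p_1+p_2),
\ee
and since $q_1+q_2\in\mathbb{Z}^n$ the right-hand side is again one of our operators; the cocycle is the canonical symplectic form evaluated on the two phase-space points. \textbf{Step 2 (composition on the symbol side).} Insert \eqref{Q2} for both factors, use Step 1, and perform the change of variables $q=q_1+q_2$, $p=p_1+p_2$ (the $q_1\cdot p_1$ terms in the phase cancel): this shows that ${\rm Op}^w_\hbar(a)\circ{\rm Op}^w_\hbar(b)$ is again of the form \eqref{Q2}, with symbol $c=:a\sharp b$ whose Fourier data are the $\hbar$-twisted convolution
\be
\widehat{c}(q,p) = (2\pi)^{-n}\sum_{q'\in\mathbb{Z}^n}\int_{\mathbb{R}^n}\widehat{a}(q',p')\,\widehat{b}(q-q',p-p')\,e^{\frac{i\hbar}{2}(p'\cdot q - q'\cdot p)}\,dp',
\ee
equivalently, after inverse Fourier transform, $a\sharp b$ is given by the oscillatory-integral ``exponential of the Poisson bracket'' formula
\be
(a\sharp b)(x,\eta) = \left. e^{\frac{i\hbar}{2}\left(\partial_{\eta}\cdot\partial_{y}-\partial_{x}\cdot\partial_{\zeta}\right)}\,a(x,\eta)\,b(y,\zeta)\right|_{y=x,\ \zeta=\eta}.
\ee
I would first establish this identity for a dense subclass of ``nice'' symbols (trigonometric polynomials in $x$, Schwartz in $\eta$), where every sum and integral converges absolutely and Fubini applies, and then propagate it to general symbols by the density/continuity argument below.

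\textbf{Step 3 (symbol estimates and the $\hbar$-expansion).} The heart of the proof is to show $a\sharp b\in S^{\ell+m}(\mathbb{T}^n\times\mathbb{R}^n)$ together with the asymptotic expansion $a\sharp b = a\cdot b - \tfrac{i\hbar}{2}\{a,b\} + O(\hbar^2)$, the successive remainders lying in symbol classes of order $\ell+m-1$, $\ell+m-2$, $\dots$; in particular $a\sharp b = a\cdot b + O(\hbar)$ in $S^{\ell+m}$. This is the standard stationary/non-stationary phase analysis of the Weyl product: Taylor-expand $b(y,\zeta)$ around $(x,\eta)$, integrate by parts in $p'$ to trade the remaining oscillatory factors for decay, and control the resulting expressions with Peetre's inequality $\langle\eta\rangle^{s}\lesssim\langle\eta-\zeta\rangle^{|s|}\langle\zeta\rangle^{s}$. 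The discrete variable $q$ is handled exactly as in \cite{R-T}: either by transplanting the known Euclidean Weyl calculus through the equivalence of toroidal symbols $S^m(\mathbb{T}^n\times\mathbb{Z}^n)$ with Euclidean symbols $S^m(\mathbb{T}^n\times\mathbb{R}^n)$ (Theorem 5.2 of \cite{R-T}), or directly by summation-by-parts estimates with the difference operators $\triangle_\kappa$. The $L^2(\mathbb{T}^n)$-boundedness Theorem \ref{Th-Bound0} then guarantees that the composed operator and all remainders are bounded operators, and supplies the continuity (in finitely many symbol seminorms) that legitimizes the density argument invoked in Step 2.

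\textbf{Step 4 (commutator) and main obstacle.} Identity \eqref{comm1} is immediate from \eqref{comp0} by antisymmetrization, and since $\{a,b\}-\{b,a\}=2\{a,b\}$ the leading term of $a\sharp b - b\sharp a$ is $-i\hbar\{a,b\}$ with remainder in $S^{\ell+m-2}\subset S^{\ell+m-1}$, which is exactly the Moyal bracket asymptotics claimed. The only genuinely technical point is Step 3: making the $\hbar$-asymptotics rigorous in the toroidal setting with honest remainder bounds in the symbol seminorms — in effect a Calderón--Vaillancourt/Hörmander-type argument — and in particular treating \emph{simultaneously} the continuous variable $p$ (integration by parts) and the discrete variable $q$ (summation by parts, via the $S^m(\mathbb{T}^n\times\mathbb{Z}^n)\leftrightarrow S^m(\mathbb{T}^n\times\mathbb{R}^n)$ dictionary). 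Everything else is bookkeeping.
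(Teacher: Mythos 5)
Your proposal is correct in outline, but it follows a genuinely different route from the paper. The paper never touches the phase--space Fourier representation: it works with the form (\ref{weyl}) of the Weyl quantization, writes ${\rm Op}^w_\hbar(b)=b(X,\frac{\hbar}{2}D)\circ T_x$ with the reflection $T_x\psi(y)=\psi(2y-x)$, and then reduces everything to the Ruzhansky--Turunen toroidal calculus: Theorem 8.4 of \cite{R-T} to pass to a toroidal amplitude, Theorem 4.2 to reduce the amplitude to a unique toroidal symbol, Theorem 4.3 for the composition of two such operators, and Theorem 5.2 to extend the resulting symbol on $\mathbb{T}^n\times\mathbb{Z}^n$ to a Euclidean one on $\mathbb{T}^n\times\mathbb{R}^n$; the Moyal asymptotics are then read off from the second--order terms of that calculus. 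Your route is instead the one of \cite{G-P} (which the paper explicitly mentions as covering ``a smaller class of simbols''): derive the twisted--convolution/Moyal product directly from the group law of the $U_\hbar(q,p)$. Your cocycle computation and the cancellation giving the phase $e^{\frac{i\hbar}{2}(p'\cdot q-q'\cdot p)}$ are correct, and the sign bookkeeping in Step 4 matches the paper's convention $\{a,b\}=\partial_xa\cdot\partial_\eta b-\partial_\eta a\cdot\partial_x b$, yielding $-i\hbar\{a,b\}$ for the commutator. What your approach buys is an explicit, manifestly symmetric product formula (which would even give an $O(\hbar^3)$ commutator remainder, stronger than the stated $O(\hbar^2)$); what it costs is that the entire symbolic calculus of Step 3 must be rebuilt by hand, whereas the paper simply inherits it from \cite{R-T}. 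The one soft spot in your argument is the density/continuity step: for symbols of positive order (e.g.\ $H\in S^2$), $\widehat{a}(q,p)$ is only a distribution in $p$ and $L^2$--boundedness of the operators does not by itself let you pass from trigonometric--polynomial symbols to general ones; you would need either an oscillatory--integral definition of the twisted convolution from the start, or convergence in the topology of a weaker symbol class $S^{m'}$, $m'>m$. Since you ultimately defer the discrete--variable estimates to the $S^m(\mathbb{T}^n\times\mathbb{Z}^n)\leftrightarrow S^m(\mathbb{T}^n\times\mathbb{R}^n)$ dictionary of \cite{R-T} anyway, the two proofs end up resting on the same foundational input, reached from opposite ends.
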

\begin{proof}
To begin, we observe that $T_\omega \psi(y) := \psi(2y-\omega)$ can be written as
\begin{equation}
T_\omega \psi(y) = (2\pi)^{-n}\sum_{\kappa \in\mathbb{Z}^n}\int_{\mathbb{T}^n}e^{i\langle (2y-\omega) -z,\kappa \rangle} \psi(z)dz, \quad \forall \ \psi \in C^\infty (\mathbb{T}^n;\Bbb C).
\end{equation}
By Theorem 8.4 in \cite{R-T}, it follows 
\begin{equation}
 {\rm Op}^w_\hbar (b) \psi (x) = (b (X, \frac{\hbar}{2} D) \circ T_{\omega = x}  \psi )(x) = (2\pi)^{-n}\sum_{\kappa \in\mathbb{Z}^n}\int_{\mathbb{T}^n}e^{i\langle x -z,\kappa \rangle}  c(\hbar,x,z,\kappa)  \psi(z)dz
\end{equation} 
with {\it toroidal amplitude} $c(\hbar,\cdot) \in C^\infty (\Bbb T^n_x \times \Bbb T^n_z \times \Bbb Z^n_\kappa)$ such that  
$
| \partial_x^\alpha \partial_z^\gamma    c(\hbar,x,z,\kappa) |   \le  C_{\alpha \gamma} \langle \kappa \rangle^{\ell+m}
$.
In particular,  $c = b(z,\frac{\hbar}{2}\kappa) + O(\hbar)$ in $S^{m} (\Bbb T^n \times \Bbb T^n \times \Bbb Z^n)$.
Now, apply Theorem 4.2 in \cite{R-T}, so that there exists a unique toroidal simbol $\sigma(\hbar, \cdot) \in S^{m} (\Bbb T^n \times \Bbb Z^n)$ such that
\begin{equation}
\label{rep-Op}
 {\rm Op}^w_\hbar (b) \psi (x)  = (2\pi)^{-n}\sum_{\kappa \in\mathbb{Z}^n}\int_{\mathbb{T}^n}e^{i\langle x-y,\kappa \rangle} \sigma (\hbar,y,\kappa)\psi(y)dy.
\end{equation} 
where in particular $\sigma  (\hbar,y,\kappa) = b(y,\frac{\hbar}{2}\kappa) + O(\hbar)$ in  $S^{m} (\Bbb T^n \times \Bbb Z^n)$.
By Theorem 4.3 in \cite{R-T}, it follows the existence of $\widehat{a \sharp b}  (\hbar, \cdot) \in S^{\ell +m} (\Bbb T^n \times \Bbb Z^n)$ such that  
\begin{equation}
{\rm Op}^w_\hbar (a) \circ {\rm Op}^w_\hbar (b) \psi  (x) = (2\pi)^{-n}\sum_{\kappa \in\mathbb{Z}^n}\int_{\mathbb{T}^n}e^{i\langle x-y,\kappa \rangle} \widehat{a \sharp b} (\hbar,y,\kappa)\psi(y)dy
\end{equation}
with $\widehat{a \sharp b}(\hbar,y,\kappa)   = a \cdot b (y,\frac{\hbar}{2}\kappa)  + O(\hbar)$ in $S^{\ell +m} (\Bbb T^n \times \Bbb Z^n)$. Now apply this operator on $T_x^{-1} \circ T_x \psi$, use again Theorems 8.4 and 4.2  in order to get    
\begin{equation}
{\rm Op}^w_\hbar (a) \circ {\rm Op}^w_\hbar (b)  =  {\rm Op}^w_\hbar (  \widetilde{a\sharp b}    )
\end{equation}
where $\widetilde{a \sharp b} (\hbar,y,\kappa)  = a \cdot b (y,\kappa) + O(\hbar)$ in $S^{\ell +m} (\Bbb T^n \times \Bbb Z^n)$.
By Theorem 5.2 in \cite{R-T} we get an euclidean simbol $a \sharp b   \in S^{\ell +m} (\Bbb T^n \times \Bbb R^n)$ which is an extention of  $\widetilde{a \sharp b}$ modulo $S^{-\infty} (\Bbb T^n \times \Bbb R^n)$, and thus  such that 
\begin{equation}
{\rm Op}^w_\hbar (a) \circ {\rm Op}^w_\hbar (b)  =  {\rm Op}^w_\hbar ( a \sharp b  )
\end{equation}
where  $a \sharp b (\hbar,y,\kappa)   = a \cdot b (y,\kappa)  + O(\hbar)$ but now in $S^{\ell +m} (\Bbb T^n \times \Bbb R^n)$. By   looking at the second order asymptotics of the simbols,  it follows $a \sharp b - b \sharp a   = - i\hbar  \{  a , b \} + O(\hbar^2)$ in $S^{\ell+m-1} (\mathbb{T}^n \times \mathbb{R}^n)$, and this gives (\ref{comm1}).
\end{proof}

\subsubsection{Wigner measures}\label{wig1}
To begin, let us recall that in the framework of the usual Weyl quantization  on $\Bbb R^n$ it can be considered the following space of test functions (see for example \cite{A-F-G},  \cite{L-P})
\begin{equation}
\mathcal{A} :=  \{ \varphi \in C_0 (\Bbb R_x^n \times \Bbb R_\xi^n) \ | \ \| \varphi \|_\mathcal{A}  :=  \int_{\Bbb R^n} \sup_{x \in \Bbb R^n} | \mathcal{F}_\xi \varphi (x,z)| 	\  dz < + \infty \}
\end{equation}
where $C_0 (\Bbb R_x^n \times \Bbb R_\xi^n)$ denotes the set of continuous functions tending to zero at infinity, and $\mathcal{F}_\xi$ is the usual Fourier transform in the frequency variables, i.e. $ \mathcal{F}_\xi \varphi (x,z) := \int_{\Bbb R^n} e^{- i \xi \cdot z} \varphi (x,\xi) d\xi$. In particular, $\mathcal{A}$ is a Banach space and it is a dense subset of  $C_0 (\Bbb R_x^n \times \Bbb R_\xi^n)$. Hence, its dual space $\mathcal{A}^\prime$ contains $C_0^\prime  (\Bbb R_x^n \times \Bbb R_\xi^n) = \mathcal{M} (\Bbb R_x^n \times \Bbb R_\xi^n)$ the space of not necessarily nonnegative Radon measures on $\Bbb R^{2n}$  of finite mass.
As shown in Proposition III.1 of \cite{L-P}, it holds the inequality
\begin{equation}
\label{estW18}
\Big| \, \int_{\Bbb  R^n}  \int_{\Bbb  R^n}  W_\hbar \psi_\hbar (x,\xi)  \varphi(x,\xi) dxd\xi \,  \Big| \le (2\pi)^{-n}   \| \varphi \|_\mathcal{A}  \cdot \| \psi_\hbar  \|_{L^2},
\end{equation}
and hence for any family of wave functions such that $\|  \psi_\hbar \|_{L^2 (\Bbb R^n)} \le C$  there exists a sequence $\hbar_j \longrightarrow 0^+$ as $j \longrightarrow + \infty$ such that $ W_{\hbar_j} \psi_{\hbar_j}$ is converging  in  $\mathcal{A}^\prime$ to some $W \in \mathcal{A}^\prime$ (thanks Banach--Alaoglu theorem). Moreover, through the use of Husimi transform, it can be proved that  in fact  any such limit $W \in \mathcal{A}^\prime$  fulfills also $W \in \mathcal{M}^+ (\Bbb R_x^n \times \Bbb R_\xi^n)$, i.e. positive Radon measures of finite mass.

We underline that there is an estimate analogous to (\ref{estW18}) for our toroidal framework which takes the form 
\begin{equation}
\label{estW76}
\Big| \sum_{ \eta \in \frac{\hbar}{2} \mathbb{Z}^n} \int_{\mathbb{T}^n}  W_\hbar \psi_\hbar (x,\eta) g(x,\eta) dx \Big| \le  (2\pi)^{-n}  \sup_{(x,\eta) \in \Bbb T^n \times \Bbb R^n} |  g (x,\eta)|   \cdot  \| \psi_\hbar  \|_{L^2}
\end{equation}
for all continuous bounded functions $g :\Bbb R^{2n} \longrightarrow \Bbb R$. 
Indeed, we observe  that  for states $\psi_\hbar \in L^2 (\Bbb T^n)$, by writing the Fourier series $\psi_\hbar (x)= \sum_{\alpha \in \Bbb Z^n} \widehat{\psi}_{\hbar,\alpha} \ e^{i \langle x , \alpha \rangle}$ we have
\begin{itemize}
\item[(i)] $\displaystyle{\sum_{\eta \in \frac{\hbar}{2}  \Bbb Z^n}     W_\hbar \psi_\hbar (x,\eta) = |\psi_\hbar (x)|^2}$, 
\item[(ii)] $
\displaystyle{  (2\pi)^{-n}  \int_{\Bbb T^n}   W_\hbar \psi_\hbar (x,\eta)  dx  = \left\{ \begin{array}{lll}   |\widehat{\psi}_{\hbar,\alpha}|^2  \ \ \text{\rm when} \ \eta = \hbar \alpha,\quad   \ \alpha \in \Bbb Z^n,   \\ 
0 \quad \quad \quad {\rm otherwise}. 
\end{array}    
\right.}
$ 
\end{itemize} 
Hence, by property (ii) it follows the estimate (\ref{estW76}).\\ 
In view of the above observations, we can now introduce the following
\begin{definition}[{\bf Test  functions}]
Let $C_0 (\Bbb T_x^n \times \Bbb R_\eta^n)$ be the set of real valued continuous functions on $\Bbb T_x^n \times \Bbb R_\eta^n$ tending to zero at infinity in $\eta$-variables. We consider the subset of those $\phi \in C_0 (\Bbb T_x^n \times \Bbb R_\eta^n)$ that admit  the phase space Fourier representation $\phi = F (\widehat{\phi})$ as in (\ref{pf-T}) for some  compactly supported $\widehat{\phi} : \Bbb Z^n \times \Bbb R^n \longrightarrow \Bbb C$.
We define the set
\label{def-wc}
\begin{equation}
A := \overline{  \Big\{ \phi \in C_0 (\Bbb T_x^n \times \Bbb R_\eta^n) \ | \    {\rm supp}(\widehat{\phi}) \ {\rm is \ compact}     \Big\} }^{ \, L^\infty }.
\end{equation}
Notice that $A$ is a closed linear subset of $L^{\infty} (\Bbb T_x^n \times \Bbb R_\eta^n)$ hence it becomes a Banach space when equipped by the $L^\infty$-norm. 
We also underline that for any fixed $\phi \in C_0 (\Bbb T_x^n \times \Bbb R_\eta^n)$ such that  ${\rm supp}(\widehat{\phi})$ is  compact then $\phi$ is necessarily a $C^\infty$ function rapidly decreasing in $\eta$-variables, and hence we can directly deal with the set of $C^\infty$ functions vanishing at infinity in the $\eta$-variables $C_0^\infty (\Bbb T_x^n \times \Bbb R_\eta^n)$. Thus, we can write 
\begin{equation}
\label{setA2}
A = \overline{  \Big\{ \phi \in C_0^\infty (\Bbb T_x^n \times \Bbb R_\eta^n) \ | \    {\rm supp}(\widehat{\phi}) \ {\rm is \ compact}     \Big\} }^{ \, L^\infty }.
\end{equation}
Moreover, we easily see that $A \subset C_b ( \Bbb T^n \times \Bbb R^n)$.
\end{definition}

\noindent
 We are now in the position to provide the
\begin{definition}[{\bf Wigner measures}]
\label{def-wc1}
Let us fix  $\{ \psi_\hbar \}_{0 < \hbar \le 1}  \in L^2 (\mathbb{T}^n)$  with $\| \psi_\hbar  \|_{L^2} \le C$ $\forall 0 <\hbar \le 1$. 
We say that  $dw \in \mathcal{M} (\Bbb R_x^n \times \Bbb R_\eta^n)$ is the  Wigner measure of the sequence  $\{ \psi_\hbar \}_{0 < \hbar \le 1}$  if  $\forall \phi \in A$    
\begin{equation}
\label{med}
\sum_{ \eta \in \frac{\hbar}{2} \mathbb{Z}^n} \int_{\mathbb{T}^n} \phi(x,\eta)  W_\hbar \psi_\hbar (x,\eta) dx   \longrightarrow   \int_{\Bbb T^n \times \Bbb R^n}   \phi(x,\eta) dw (x,\eta) \quad \quad 
\end{equation}
for some sequence $\hbar = \hbar_j \ \longrightarrow 0^+$  as $j \longrightarrow + \infty$.
\end{definition}
\noindent

\begin{remark} 
The Wigner transform of $\psi_\hbar \in C^\infty (\Bbb T^n)$ 
\begin{equation}
W_\hbar \psi_\hbar (x,\eta) := (2\pi)^{-n}\int_{\Bbb{T}^n} e^{2 \frac{i}{\hbar} \langle z,\eta\rangle}\psi_\hbar (x-z) \psi^\star_\hbar (x+z)dz, \quad \eta \in \frac{\hbar}{2} \Bbb Z^n,
\label{WignerT23}
\end{equation}
can be rewritten, when acting on test functions $\phi$, as 
\begin{eqnarray}
\label{act-W}
\sum_{\eta \in \frac{\hbar}{2} \mathbb{Z}^n} \int_{\mathbb{T}^n} \phi(x,\eta) W_\hbar \psi_\hbar (x,\eta)dx &=&  \sum_{\kappa \in  \mathbb{Z}^n} \int_{\mathbb{T}^n} 
\phi \Big(x, \frac{2}{\hbar} \kappa\Big) W_\hbar \psi_\hbar \Big(x,\frac{2}{\hbar} \kappa \Big)dx,
\\
W_\hbar \psi_\hbar \Big(x,\frac{2}{\hbar} \kappa \Big) &=& (2\pi)^{-n}\int_{\Bbb{T}^n} e^{i \langle z,\kappa\rangle}\psi_\hbar (x-z) \psi^\star_\hbar (x+z)dz, \quad  \kappa \in  \mathbb{Z}^n. 
\end{eqnarray}
Thus, we notice the $2\pi \Bbb Z^n$ - periodicity properties
\begin{eqnarray}
W_\hbar \psi_\hbar \Big(x,\frac{2}{\hbar} (\kappa + 2\pi \alpha) \Big) &=&  W_\hbar \psi_\hbar \Big(x,\frac{2}{\hbar} \kappa   \Big)  \quad \forall \alpha \in \Bbb Z^n,
\\
W_\hbar \psi_\hbar \Big(x + 2\pi \alpha ,\frac{2}{\hbar} \kappa  \Big) &=&  W_\hbar \psi_\hbar \Big(x,\frac{2}{\hbar} \kappa   \Big)  \quad \forall \alpha \in \Bbb Z^n.
\end{eqnarray}
From (\ref{WignerT23}) we also easily obtain the  estimate
\begin{equation}
\label{sup-W}
\sup_{\eta \in \frac{\hbar}{2} \Bbb Z^n}  \sup_{x \in \Bbb T^n} |W_\hbar \psi_\hbar (x,\eta)| \le (2\pi)^{-n} \| \psi_\hbar \|_{L^2}^2.   
\end{equation}
Notice that if $\eta \notin \frac{\hbar}{2} \Bbb Z^n$ then (\ref{WignerT23}) is not defined, since we are computing the integral over the torus and thus we need the $2\pi \Bbb Z^n$ periodicity with respect to $x$-variables of the function within the integral. For this reason, we cannot regard $W_\hbar \psi_\hbar (x,\eta)$ as a wellposed function belonging to $L^\infty (\Bbb T^{n}_x \times \Bbb R^n_\eta)$ even if we exhibited the estimate (\ref{sup-W}). This is one of the main differences with the Weyl quantization on $\Bbb R^{n}$ where the Wigner transform $W_\hbar \psi_\hbar (x,\xi)$, when $\psi_\hbar \in L^2 (\Bbb R^n)$,  is a well defined function in $L^\infty (\Bbb R^{n}_x \times \Bbb R^n_\xi)$ for any $\hbar >0$.
\end{remark}
In the toroidal framework of this paper, under the general assumption  $\| \psi_\hbar  \|_{L^2} \le C$ with $C >0$ independent of $\hbar$ we obtain semiclassical limits in $A^\prime$ (see Lemma \ref{t-depW}) and for suitably defined wave functions (as for example the WKB ones shown in Section \ref{SEC-wkb}) we can recover semiclassical limits as  probability measures on $\Bbb T^n \times \Bbb R^n$. 
\begin{lemma}
\label{t-depW}
Let $\{ \psi_\hbar (t) \}_{0 < \hbar \le 1}$ a sequence in $C([-T,T]; L^2 (\Bbb T^n))$ such that $\| \psi_\hbar (t)  \|_{L^2} \le C_T$ for all $t \in [-T,T]$ and $0 < \hbar \le 1$.  Then, there is a sequence $\hbar_j \longrightarrow 0^+$ as $j \longrightarrow + \infty$ such that $W_{\hbar_j} \psi_{\hbar_j} \rightharpoonup W$ in $L^\infty ([-T,+T];A^\prime)$ with $A$ as in Def \ref{def-wc}.
\end{lemma}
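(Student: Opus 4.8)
The plan is to upgrade to a time-dependent setting the Banach--Alaoglu argument sketched before Definition \ref{def-wc1}. I would regard $t\mapsto W_\hbar\psi_\hbar(t,\cdot)$ as a uniformly bounded curve in the Banach space $A'$, so that the whole family $\{W_\hbar\psi_\hbar\}_{0<\hbar\le1}$ sits in a fixed ball of $L^\infty([-T,T];A')$, and then extract a weak-$\ast$ convergent subsequence there. The structural fact that makes this work is that $L^\infty([-T,T];A')$ is the dual of the \emph{separable} space $L^1([-T,T];A)$, whence bounded sets are sequentially weak-$\ast$ compact.

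First I would fix $t$ and $\hbar$ and observe, using properties (i)--(ii) stated after \eqref{estW76} together with \eqref{estW76} and \eqref{sup-W}, that $\phi\mapsto\sum_{\eta\in\frac\hbar2\mathbb{Z}^n}\int_{\mathbb{T}^n}\phi(x,\eta)\,W_\hbar\psi_\hbar(t,x,\eta)\,dx$ is a bounded linear functional on $A$ whose norm is controlled by a constant depending only on $n$ and $C_T$ (through $\|\psi_\hbar(t)\|_{L^2}\le C_T$). Thus $W_\hbar\psi_\hbar(t,\cdot)\in A'$ with a bound uniform in $t\in[-T,T]$ and $0<\hbar\le1$. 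Next I would check measurability in $t$: for fixed $\hbar$ and $\phi\in A$, the representation \eqref{act-W} writes $\langle W_\hbar\psi_\hbar(t,\cdot),\phi\rangle$ as an absolutely convergent series of jointly continuous sesquilinear expressions in $\psi_\hbar(t)$, so, $t\mapsto\psi_\hbar(t)$ being continuous into $L^2(\mathbb{T}^n)$, the scalar function $t\mapsto\langle W_\hbar\psi_\hbar(t,\cdot),\phi\rangle$ is continuous --- first for $\phi$ with compactly supported $\widehat\phi$, then for general $\phi\in A$ by density and the uniform bound just obtained. Hence $t\mapsto W_\hbar\psi_\hbar(t,\cdot)$ is a bounded, weak-$\ast$ measurable $A'$-valued map, i.e.\ an element of $L^\infty([-T,T];A')$.

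Then I would invoke separability: $A$ is a closed linear subspace of the separable Banach space $C_0(\mathbb{T}^n_x\times\mathbb{R}^n_\eta)$ (by Definition \ref{def-wc}, see \eqref{setA2}), hence $A$ and therefore the Bochner space $L^1([-T,T];A)$ are separable, and the dual of the latter is $L^\infty([-T,T];A')$. The bounded sequence $\{W_\hbar\psi_\hbar\}$ then admits, by the sequential form of the Banach--Alaoglu theorem, a subsequence $\hbar_j\to0^+$ with $W_{\hbar_j}\psi_{\hbar_j}\rightharpoonup W$ weak-$\ast$ to some $W\in L^\infty([-T,T];A')$, meaning
\[
\int_{-T}^{T}\Big(\sum_{\eta\in\frac{\hbar_j}{2}\mathbb{Z}^n}\int_{\mathbb{T}^n} g(t,x,\eta)\,W_{\hbar_j}\psi_{\hbar_j}(t,x,\eta)\,dx\Big)\,dt \;\longrightarrow\; \int_{-T}^{T}\langle W(t,\cdot),g(t,\cdot)\rangle\,dt
\]
for every $g\in L^1([-T,T];A)$. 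If one prefers to avoid quoting the Bochner-duality theorem, the same conclusion follows by a hands-on diagonal argument over a countable dense subset of $L^1([-T,T];A)$, combined with the uniform bound to pass to the whole space.

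The hard part is not any single estimate --- the Wigner bound \eqref{estW76} is already at our disposal --- but the soft functional-analytic bookkeeping: that $A$ is separable, that $t\mapsto W_\hbar\psi_\hbar(t,\cdot)$ is genuinely weak-$\ast$ measurable into $A'$, and that $L^\infty([-T,T];A')=\big(L^1([-T,T];A)\big)'$, so that sequential weak-$\ast$ compactness applies. One must also keep in mind the caveat emphasized in the Remark containing \eqref{sup-W}: $W_\hbar\psi_\hbar(t,x,\eta)$ is not an honest function on $[-T,T]\times\mathbb{T}^n\times\mathbb{R}^n$ (it is defined only for $\eta\in\frac\hbar2\mathbb{Z}^n$), so the convergence $W_{\hbar_j}\psi_{\hbar_j}\rightharpoonup W$ is to be read exclusively in the paired, time-integrated sense displayed above.
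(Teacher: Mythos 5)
Your proposal is correct and follows essentially the same route as the paper: a uniform bound on $W_\hbar\psi_\hbar(t)$ in $A'$ via \eqref{estW76}, the identification of $L^\infty([-T,T];A')$ as the dual of the separable space $L^1([-T,T];A)$, and sequential weak-$\ast$ compactness from Banach--Alaoglu. The paper's proof is terser, omitting the measurability-in-$t$ and separability-of-$A$ checks that you rightly supply.
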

\begin{proof}
Since we are assuming $\psi_\hbar \in C([-T,T]; L^2 (\Bbb T^n))$ with $\| \psi_\hbar (t)  \|_{L^2} \le C_T$ then the estimate (\ref{estW76}) implies that for $0 < \hbar \le 1$, the family $W_{\hbar} \psi_{\hbar}$ is bounded in $L^\infty ([-T,+T];A^\prime)$. However, $L^\infty ([-T,+T];A^\prime)$ is the dual of the separable space  $L^1 ([-T,+T] ; A)$ and hence the application of the Banach-Alaoglu Theorem provides the existence of a converging subsequence $W_{\hbar_j} \psi_{\hbar_j} \rightharpoonup W$ in $L^\infty ([-T,+T];A^\prime)$.
\end{proof}

We devote now our attention on the following (locally finite) Borel complex measure on $\Bbb T^n \times \Bbb R^n$. 
Let $\mathcal{X}_\Omega$ be the characteristic function of a Borel set $\Omega \subseteq \Bbb T^n \times \Bbb R^n$, we define
\begin{equation}
\label{PO}
\mathbb{P}_\hbar (\Omega) := \sum_{ \eta \in \frac{\hbar}{2} \mathbb{Z}^n} \int_{\mathbb{T}^n} \mathcal{X}_\Omega (x,\eta)  W_\hbar \psi_\hbar (x,\eta) dx.  
\end{equation}
which is a (complex valued) countably additive set function on the Borel sigma algebra of $\Bbb T^n \times \Bbb R^n$. 
In particular, we notice that if $\| \psi_\hbar \|_{L^2} = 1$ then $|\mathbb{P}_\hbar (\Omega) | \le 1$ for all $\Omega \subseteq \Bbb T^n \times \Bbb R^n$ and $|\mathbb{P}_\hbar ( \Bbb T^n \times \Bbb R^n) | = 1$. As usual, we say that  $\mathbb{P}_\hbar$ is weak (i.e. narrow) convergent to a Borel complex measure $\mathbb{P}$ if $\forall f \in C_b ( \Bbb T^n \times \Bbb R^n)$ it holds 
\begin{equation}
\label{NC}
 \int_{\Bbb T^n \times \Bbb R^n} f(x,\eta) d\mathbb{P}_{\hbar} (x,\eta)  \longrightarrow   \int_{\Bbb T^n \times \Bbb R^n} f(x,\eta) d\mathbb{P} (x,\eta)
\end{equation}
as $\hbar \ \longrightarrow 0^+$. 
In fact, since $f \in C_b ( \Bbb T^n \times \Bbb R^n)$,  it holds
\begin{equation}
\label{NC2}
 \int_{\Bbb T^n \times \Bbb R^n} f(x,\eta) d\mathbb{P}_{\hbar} (x,\eta) = \sum_{ \eta \in \frac{\hbar}{2} \mathbb{Z}^n} \int_{\mathbb{T}^n} f (x,\eta)  W_{\hbar} \psi_{\hbar} (x,\eta) dx .
\end{equation}

\begin{definition}
The family of (complex Borel) measures $\{ \mathbb{P}_\hbar \}_{0 < \hbar \le 1}$ on the probability space $\Bbb T^n \times \Bbb R^n$ (equipped with the Borel sigma algebra) is called {\it tight} if 
\begin{equation}
\label{tight-C}
\lim_{R \rightarrow + \infty}  \, \sup_{0 < \hbar \le 1} \,  \int_{ \Bbb T^n \times \{ \Bbb R^{n} \backslash B_R \} }  d\mathbb{P}_{\hbar} (x,\eta)  = 0.
\end{equation}
Thanks to a well-known Prokhorov's Theorem, the set of measures  $\{ \mathbb{P}_\hbar \}_{0 < \hbar \le 1}$ is relatively compact with respect to the weak  topology if and only if  is tight. Notice that  the condition (\ref{tight-C}) reads equivalently as $\lim_{R \rightarrow + \infty}  \sup_{0 < \hbar \le 1} \mathbb{P}_{\hbar} (\Bbb T^n \times \{ \Bbb R^{n} \backslash B_R \}) = 0  $.
\end{definition}

\begin{remark}
When $\mathbb{P}_\hbar = \mathbb{P}_\hbar^\pm$ is associated to the class of WKB wave functions  $\varphi_\hbar^{\pm}$ described in Section \ref{SEC-wkb}, we will directly prove the weak convergence (with test functions in A)  to some meaningful  probability measures of  monokinetic type (see Theorem \ref{TH4}). On the other hand,  within Lemma \ref{L-T1} we will also prove that such measures $\mathbb{P}_\hbar^\pm$ fulfill the tightness condition (\ref{tight-C}), and in this way we can apply the next result on time propagation of tightness.  This ensures the existence of the Wigner probability measure associated to the solution of the Schr\"odinger equation, and its coincidence with the solution of the underlying classical continuity equation,  see Theorem \ref{th51} and  Proposition \ref{prop53}.  
\end{remark}

\begin{proposition}[{\bf Propagation of tightness}]
\label{Prop-T}
Let $H = \frac{1}{2} |\eta|^2 + V(x)$ with $V \in C^\infty (\Bbb T^n)$, $\psi_\hbar \in L^{2} (\Bbb T^n)$ be such that $\| \psi_\hbar   \|_{L^2} \le C$ for all $0 < \hbar \le 1$. Assume that $\mathbb{P}_\hbar$ as in (\ref{PO}) is tight. 
Define $\psi_\hbar (t) := e^{ -\frac{i}{\hbar} {\rm Op}_\hbar (H) t} \psi_\hbar$. Then, $\mathbb{P}_\hbar (t)$ is tight for any $t \in \Bbb R$.
\end{proposition}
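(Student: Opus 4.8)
The plan is to control the $\eta$-mass of $\mathbb{P}_\hbar(t)$ at large radius by differentiating in $t$ the quantity $\langle \psi_\hbar(t), \mathrm{Op}_\hbar^w(\chi_R)\psi_\hbar(t)\rangle$ for a suitable cutoff $\chi_R$ and using the Moyal commutator estimate of Theorem \ref{Th-comp}. Concretely, fix a function $\chi \in C^\infty(\Bbb R^n;[0,1])$ with $\chi(\eta) = 1$ for $|\eta|\ge 2$ and $\chi(\eta)=0$ for $|\eta|\le 1$, set $\chi_R(\eta) := \chi(\eta/R)$, and view $b_R(x,\eta) := \chi_R(\eta)$ as a symbol in $S^0(\Bbb T^n\times\Bbb R^n)$ (indeed $b_R$ is $x$-independent, bounded, with $\partial_\eta$-derivatives decaying in $R$). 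The first step is the elementary observation, via property (ii) of the Wigner transform on the torus, that
\[
\sum_{\eta\in\frac{\hbar}{2}\Bbb Z^n}\int_{\Bbb T^n} \mathcal{X}_{\{|\eta|\ge 2R\}}(x,\eta)\, W_\hbar\psi_\hbar(t)(x,\eta)\,dx \;\le\; \big|\langle \psi_\hbar(t),\mathrm{Op}_\hbar^w(b_R)\psi_\hbar(t)\rangle\big| + o_R(1),
\]
so it suffices to bound the right-hand side uniformly in $\hbar$ and $t$ by a quantity tending to $0$ as $R\to\infty$. Here one must be slightly careful because $W_\hbar\psi_\hbar$ is a signed quantity on the torus; I would instead work with the Husimi (anti-Wick) version of $\mathbb{P}_\hbar(t)$, which is a genuine nonnegative measure with the same large-$\eta$ tail up to $O(\hbar)$, so that $\mathbb{P}_\hbar(t)(\Bbb T^n\times\{|\eta|\ge R\})$ is genuinely dominated by a nonnegative expectation of a cutoff symbol.

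The main step is a Gronwall argument. Writing $f_R(t) := \langle\psi_\hbar(t),\mathrm{Op}_\hbar^w(b_R)\psi_\hbar(t)\rangle$, one computes
\[
\frac{d}{dt} f_R(t) = \frac{i}{\hbar}\big\langle \psi_\hbar(t),[\mathrm{Op}_\hbar^w(H),\mathrm{Op}_\hbar^w(b_R)]\psi_\hbar(t)\big\rangle.
\]
By Theorem \ref{Th-comp}, $[\mathrm{Op}_\hbar^w(H),\mathrm{Op}_\hbar^w(b_R)] = \mathrm{Op}_\hbar^w(\{H,b_R\}_{\mathrm M})$ with $\{H,b_R\}_{\mathrm M} = -i\hbar\{H,b_R\} + O(\hbar^2)$ in the appropriate symbol class, so that $\frac{i}{\hbar}[\mathrm{Op}_\hbar^w(H),\mathrm{Op}_\hbar^w(b_R)] = \mathrm{Op}_\hbar^w(\{H,b_R\}) + O(\hbar)$. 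Since $H = \tfrac12|\eta|^2 + V(x)$, the Poisson bracket is $\{H,b_R\}(x,\eta) = -\nabla_x V(x)\cdot\nabla_\eta b_R(x,\eta) = -\tfrac1R\nabla_x V(x)\cdot(\nabla\chi)(\eta/R)$, which is a symbol in $S^0$ supported in $\{R\le|\eta|\le 2R\}$ with $L^\infty$-norm (and all $x$-derivative norms) bounded by $C/R$ uniformly, with $C$ depending only on $\|V\|_{C^{2N+1}}$ and $\|\chi\|_{C^{2N}}$. Applying the boundedness Theorem \ref{Th-Bound0} to $\mathrm{Op}_\hbar^w(\{H,b_R\})$ and also to the $O(\hbar)$ remainder symbol (whose seminorms are likewise $O(1/R)$), and using $\|\psi_\hbar(t)\|_{L^2}\le C$ (unitarity of $e^{-\frac{i}{\hbar}\mathrm{Op}_\hbar(H)t}$), we obtain
\[
\Big|\frac{d}{dt}f_R(t)\Big| \;\le\; \frac{C_1}{R}
\]
with $C_1$ independent of $\hbar$, $R$, and $t$. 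Integrating from $0$ to $t$ gives $f_R(t) \le f_R(0) + C_1|t|/R$.

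The last step is to pass back from $f_R$ to the tail of $\mathbb{P}_\hbar(t)$ and invoke the hypothesis at time $0$. By the Husimi comparison, $\mathbb{P}_\hbar(t)(\Bbb T^n\times\{|\eta|\ge 2R\}) \le f_R(t) + O_R(\hbar) \le f_R(0) + C_1|t|/R + O_R(\hbar)$, and $f_R(0) \le \mathbb{P}_\hbar(0)(\Bbb T^n\times\{|\eta|\ge R\}) + O_R(\hbar)$, which by the assumed tightness of $\mathbb{P}_\hbar$ is $\le \varepsilon + O_R(\hbar)$ for $R$ large, uniformly in $\hbar$. Thus $\sup_{0<\hbar\le1}\mathbb{P}_\hbar(t)(\Bbb T^n\times\{|\eta|\ge 2R\}) \to 0$ as $R\to\infty$, which is precisely the tightness of $\mathbb{P}_\hbar(t)$; a fixed-$t$ statement is all that is claimed, but the bound is in fact locally uniform in $t$. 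I expect the main obstacle to be the bookkeeping around the signed nature of $W_\hbar\psi_\hbar$ on the torus — i.e. justifying rigorously that tightness of the signed set functions $\mathbb{P}_\hbar$ transfers to, and is controlled by, the nonnegative Husimi measures, and that the $O(\hbar)$ and $O(\hbar^2)$ remainders in Theorem \ref{Th-comp} really do come with symbol seminorms that are $O(1/R)$ rather than merely $O(1)$; this requires tracking the $R$-dependence through the asymptotic expansion, which is routine but must be done with care.
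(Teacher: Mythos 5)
Your proposal is correct and follows essentially the same route as the paper: a commutator/Gronwall argument on $f_R(t)=\langle\psi_\hbar(t),\mathrm{Op}^w_\hbar(Y_R)\psi_\hbar(t)\rangle$, with Theorem \ref{Th-comp} giving $\{Y_R,H\}=O(1/R)$ and a remainder that is also small in $R$ (the paper tracks it as $O(R^{-2})$ through the second derivatives of $Y_R$), then Theorem \ref{Th-Bound0} and unitarity to get $|f_R'(t)|\le K/R$. The one point where you diverge is the issue you flag as the main obstacle, the signed nature of $W_\hbar\psi_\hbar$: you propose a Husimi comparison with $O_R(\hbar)$ errors, whereas the paper disposes of this exactly via property (ii) of the toroidal Wigner transform, namely $(2\pi)^{-n}\int_{\Bbb T^n}W_\hbar\psi_\hbar(x,\eta)\,dx=|\widehat\psi_{\hbar,\alpha}|^2\ge 0$ for $\eta=\hbar\alpha$. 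Since both $Y_R$ and $\mathcal{X}_U$ depend only on $\eta$, this makes $\mathbb{P}_\hbar(t)(\Bbb T^n\times U)=\sum_\alpha\mathcal{X}_U(\hbar\alpha)|\widehat\psi_{\hbar,\alpha}(t)|^2$ a genuinely nonnegative sum, termwise dominated by $f_R(t)$ when $U=\Bbb R^n\setminus B_R$ and dominating $f_R(0)$ when $U=\Bbb R^n\setminus B_{R/2}$; this gives the sandwich $\mathbb{P}_\hbar(t)(M_R)\le\mathbb{P}_\hbar(M_{R/2})+|t|K/R$ with no Husimi errors to control. Your version would also work, but the exact positivity identity makes the bookkeeping unnecessary.
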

\begin{proof}
Let $Y \in C^{\infty} (\Bbb R^n_\eta ;[0,1])$ be such that $Y(\eta) = 1$ on $|\eta| >1$ and $Y(\eta) = 0$ on $|\eta| < 1/2$; for $R >0$ define $Y_R (\eta) := Y(\eta/R)$. Then, 
$|\nabla_\eta Y| \le C/R$ and $|\nabla_\eta^2 Y| \le C/R^2$ for some $C >0$.  In fact, we can regard $Y \in C^{\infty}_b (\Bbb T^n_x \times \Bbb R^n_\eta ;[0,1])$.
\begin{equation}
\frac{d}{ds} \langle \psi_\hbar (s), {\rm Op}_\hbar (Y_R) \psi_\hbar (s) \rangle_{L^2} = \frac{i}{\hbar} \langle  \psi_\hbar (s), [{\rm Op}_\hbar (Y_R) , {\rm Op}_\hbar (H)   ] \psi_\hbar (s)     \rangle_{L^2}. 
\end{equation}
Recalling Theorem \ref{Th-comp}, the commutator reads $[{\rm Op}_\hbar (Y_R) , {\rm Op}_\hbar (H)   ] = {\rm Op}_\hbar (\{ Y_R , H  \}_M)$ where the Moyal bracket has the asymptotics $\{ Y_R , H  \}_M =  -i \hbar \{ Y_R , H  \} + D_\hbar$ in $S^2 (\Bbb T^n \times \Bbb R^n)$ where the remainder $D_\hbar \simeq O(\hbar^2)$ and involves the second order derivatives of $Y_R $ and $H$. But $|\partial_x^\alpha \partial_\eta^\beta H(z)| \le c_1$ and $|\partial_x^\alpha \partial_\eta^\beta Y_R (z)| \le c_2/R^2$ for $|\alpha + \beta| = 2$;  hence $| D_\hbar | \simeq R^{-2}$ as $R  \longrightarrow + \infty$ (uniformly on $\hbar$). Moreover $\{ Y_R , H  \}(z) = \partial_x Y_R \partial_\eta H -  \partial_\eta Y_R \partial_x H  = -  \partial_\eta Y_R \partial_x H $ hence  $| \{ Y_R , H  \}(z)|  \le c_3 /R$. By recalling the $L^2$ - boundedness of the Weyl operators with simbols in $S^0_{0,0}(\Bbb T^n \times \Bbb R^n)$ as shown in Theorem \ref{Th-Bound0} and using the assumption $\| \psi_\hbar   \|_{L^2} \le C$, we deduce that
\begin{equation}
\Big| \frac{d}{ds} \langle \psi_\hbar (s), {\rm Op}_\hbar (Y_R) \psi_\hbar (s) \rangle_{L^2} \Big| \le K \cdot R^{-1} 
\end{equation}
for some $K > 0$ independent on $\hbar$ and $t$. Thus
\begin{equation}
\langle \psi_\hbar (t), {\rm Op}_\hbar (Y_R) \psi_\hbar (t) \rangle_{L^2}  =  \langle \psi_\hbar (0), {\rm Op}_\hbar (Y_R) \psi_\hbar (0) \rangle_{L^2}  +   \int_0^t   \frac{d}{ds} \langle \psi_\hbar (s), {\rm Op}_\hbar (Y_R) \psi_\hbar (s) \rangle_{L^2} ds 
\end{equation}
and 
\begin{eqnarray}
|\langle \psi_\hbar (t), {\rm Op}_\hbar (Y_R) \psi_\hbar (t) \rangle_{L^2} | &\le&  | \langle \psi_\hbar (0), {\rm Op}_\hbar (Y_R) \psi_\hbar (0) \rangle_{L^2}|  + \Big|  \int_0^t   \frac{d}{ds} \langle \psi_\hbar (s), {\rm Op}_\hbar (Y_R) \psi_\hbar (s) \rangle_{L^2} ds \Big| 
\nonumber
\\
&\le&  | \langle \psi_\hbar (0), {\rm Op}_\hbar (Y_R) \psi_\hbar (0) \rangle_{L^2}|  + t \ K \cdot R^{-1} 
\end{eqnarray}
Notice that, from the property (ii) of $W_\hbar \psi_\hbar$, it follows
\begin{eqnarray}
\langle \psi_\hbar , {\rm Op}_\hbar (Y_R) \psi_\hbar  \rangle_{L^2} &=&
  \sum_{ \eta \in \frac{\hbar}{2} \mathbb{Z}^n} \int_{\mathbb{T}^n}  W_\hbar \psi_\hbar (x,\eta) Y_R (\eta) dx 
  \\
 &=&  \sum_{ \eta \in \frac{\hbar}{2} \mathbb{Z}^n} Y_R (\eta)  \int_{\mathbb{T}^n}  W_\hbar \psi_\hbar (x,\eta)  dx = 
  \sum_{ \alpha \in \mathbb{Z}^n}  Y_R (\hbar \alpha)  |\widehat{\psi}_{\hbar,\alpha}|^2, 
\end{eqnarray}
thus any term of the series is non negative. The same holds true for 
\begin{eqnarray}
\mathbb{P}_\hbar  (\Bbb T^n \times U) &=&
 \sum_{ \eta \in \frac{\hbar}{2} \mathbb{Z}^n} \int_{\mathbb{T}^n}  W_\hbar \psi_\hbar (x,\eta) \mathcal{X}_U (\eta) dx 
 \\
&=&  \sum_{ \eta \in \frac{\hbar}{2} \mathbb{Z}^n} \mathcal{X}_U  (\eta)  \int_{\mathbb{T}^n}  W_\hbar \psi_\hbar (x,\eta)  dx = 
\sum_{ \alpha \in \mathbb{Z}^n}  \mathcal{X}_U (\hbar \alpha)  |\widehat{\psi}_{\hbar,\alpha}|^2, 
\label{P-U}
\end{eqnarray}
where $U$ is any Borel set in $\Bbb R^n$.\\
By defining $M_R :=   \Bbb T^n \times \{ \Bbb R^{n} \backslash B_R \}$, and recalling that $Y_R (\eta) = 0$ for $|\eta| < R/2$ whereas $Y_R(\eta) = 1$ for $|\eta| > R$, we can write
\begin{eqnarray}
\mathbb{P}_\hbar (t) (M_R)  &\le& \langle \psi_\hbar (t), {\rm Op}_\hbar (Y_R) \psi_\hbar (t) \rangle_{L^2} 
\\
&\le&  \langle \psi_\hbar (0), {\rm Op}_\hbar (Y_R) \psi_\hbar (0) \rangle_{L^2}  + t \ K \cdot R^{-1} 
\\
&\le&  \mathbb{P}_\hbar  (M_{R/2})   + t \ K \cdot R^{-1} 
\end{eqnarray} 
and hence (recalling the tightness assumption on $\mathbb{P}_\hbar$) 
\begin{equation}
\lim_{R \rightarrow + \infty}   \sup_{0 < \hbar \le 1} \mathbb{P}_\hbar (t) (M_R)  = 0.
\end{equation}
\end{proof}

\subsection{A quick review of weak KAM theory and Aubry-Mather theory}\label{kam1}
\subsubsection{Weak solutions of Hamilton-Jacobi equation}
\label{sub-wHJ}
The  {\it weak KAM theory} deals with the existence of Lipschitz continuous solutions of the stationary Hamilton-Jacobi equation
\begin{equation}
\label{def-eff0}
H(x,P+\nabla_x v(P,x))  =  \bar{H}(P), \quad P \in \Bbb R^n, 
\end{equation}
for Tonelli Hamiltonians  $H \in C^\infty (\mathbb{T}^n \times \mathbb{R}^n;\mathbb{R})$, that is to say, for functions $H$ such that $\eta \mapsto H(x,\eta)$ 
is strictly convex and uniformly superlinear in the fibers of the canonical projection $\pi : \mathbb{T}^n \times \mathbb{R}^n \longrightarrow \mathbb{T}^n$.  
The function $\bar{H}(P)$ is called the {\it effective Hamiltonian} and, as showed in \cite{C-I-P} (see also \cite{E-G}),  
it can be expressed by the inf-sup formula
\begin{equation}
\label{def-eff}
\bar{H}(P) =  \inf_{v \in C^\infty  (\mathbb{T}^n;\mathbb{R})} \  \sup_{x \in \mathbb{T}^n}  \ H(x,P+\nabla_x v(x)) 
\end{equation}
which is a convex function of $P \in \Bbb R^n$ (hence continuous).
The Lax-Oleinik semigroup of negative and positive type is defined as
$$
T_t^{\mp} u (x) :=  \inf_{\gamma} \left\{ u(\gamma(0))  \pm  \int_0^t L(\gamma(s),\dot{\gamma}(s))  - P \cdot \dot{\gamma}(s)  \  ds \right\},
$$ 
where the infimum is taken over all absolutely continuous curves $\gamma : [0,t] \longrightarrow \mathbb{T}^n$ such that $\gamma(t)=x$. 
A  function $v_{-} \in C^{0,1}( \mathbb{T}^n ; \Bbb R)$   is said to be a {\it weak KAM solution of negative type} for (\ref{def-eff0}) if $\forall$ 
$t\ge0$
\begin{equation}
\label{back-}
T_t^{-} v_{-}  = v_{-}  - t \, \bar{H}(P) ,  
\end{equation}
whereas  it is said to be a {\it weak KAM solution of positve type} if  $\forall$ $t\ge0$ 
\begin{equation}
\label{back+}
T_t^{+} v_{+}  = v_{+} + t \, \bar{H}(P). 
\end{equation}
As  a consequence, for any weak KAM solution it holds 
\begin{equation}
\label{inc-G}
\overline{{\rm Graph} (P + \nabla_x v_\pm (P,\cdot))}  \subset  \{ (x,\eta) \in \Bbb T^n \times \Bbb R^n \ | \ H(x,\eta) = \bar{H}(P) \}
\end{equation}
Geometrically, equations (\ref{back-}) and (\ref{back+}) imply also that we are looking at functions for which the graphs are invariant under the 
backward (resp. forward) Euler-Lagrange flow, namely 
\begin{equation}
\varphi_H^t \Big( {\rm Graph}(P + \nabla_x v_- (P,\cdot))  \Big) \subseteq {\rm Graph}(P + \nabla_x v_- (P,\cdot)) \quad \forall t \le 0
\end{equation}
and 
\begin{equation}
\varphi_H^t \Big( {\rm Graph}(P + \nabla_x v_+ (P,\cdot))  \Big) \subseteq {\rm Graph}(P + \nabla_x v_+ (P,\cdot)) \quad \forall t \ge 0
\end{equation}
see Theorems 4.13.2 and 4.13.3 in \cite{F}. Moreover, it is proved that the maps $x \longmapsto (x,P + \nabla_x v_\pm (P,x))$ are continuous on  ${\rm  dom}(\nabla_x v_\pm)$.
As showed within Th. 7.6.2 of \cite{F}, all the Lipschitz continuous weak KAM solutions of negative type coincide with the so-called {\it viscosity solutions} in the sense of Crandall-Lions  \cite{C-L}.  

\subsubsection{Mather measures}
\label{sec-M}
The Aubry-Mather theory proves the existence of  invariant and Action-minimizing measures as well as  invariant and Action-minimizing sets in the 
phase space. Here  we recall only those results which we are going to use in what follows, and for an exahustive treatment we address the reader to \cite{Ma1}, \cite{M1}, \cite{So}.\\
Recall that a compactly supported Borel probability measure $d\mu$  on the tangent bundle $T \mathbb{T}^n$ is called {\it invariant}  with respect to the Lagrangian flow  
$\phi^t : \mathbb{T}^n \times \mathbb{R}^n \longrightarrow \mathbb{T}^n \times \mathbb{R}^n$  related to a Lagrangian function $L(x,\xi)$, which we suppose to be
Legendre-related to a Tonelli Hamiltonian $H(x,p)$, if 
\begin{equation*}
\int_{\mathbb{T}^n \times \mathbb{R}^n}  f (\phi^t (x,\xi))   d \mu (x,\xi) =  \int_{\mathbb{T}^n \times \mathbb{R}^n}  f (x,\xi)   d \mu (x,\xi),
\end{equation*}
for all   $t \in \mathbb{R}$ and all $f \in C^\infty_0 (\mathbb{T}^n \times \mathbb{R}^n;\mathbb{R}$. Recall also that a Borel probability measure $d\mu$ is said to be {\it closed} if  
for every $ g \in C^\infty (\mathbb{T}^n;\mathbb{R})$ one has
\begin{equation*}
\int_{\mathbb{T}^n \times \mathbb{R}^n} \nabla_ x g (x)  \cdot \xi \  d \mu (x,\xi) =  0.
\end{equation*}
One says that an  invariant   compactly supported Borel probability measure $d\mu_P$ is a {\it Mather measure} if it satisfies the Mather {\it $P$-minimal problem} 
for all $P \in \mathbb{R}^n$, that is,
\begin{equation*}
\label{mather-P}
\int_{\mathbb{T}^n \times \mathbb{R}^{n}}  \bigl( \, L (x,\xi) - P  \cdot \xi  \, \bigr) \ d\mu_P (x,\xi)  =  
\inf_{d\mu}   \int_{\mathbb{T}^n \times \mathbb{R}^{n}} \bigl( \, L (x,\xi) - P  \cdot \xi  \, \bigr) \ d\mu(x,\xi),   
\end{equation*}
where the infimum is taken over all invariant compactly supported Borel probability measures $d\mu$. 
Moreover, the miminizing value of the Action is related to the effective Hamiltonian as
\begin{equation*}
\label{mather-H}
- \bar{H}(P)  =   \int_{\mathbb{T}^n \times \mathbb{R}^{n}} \bigl( \, L (x,\xi) - P  \cdot \xi \, \bigr) \ d\mu_P (x,\xi).
\end{equation*}
It has been also proved that the Mather measures of a Tonelli-Lagrangian are those which minimize the action in the class of all (compactly supported) closed measures (see for example \cite{B}). 
This fact will be useful in the proof of  Theorem 1.2. 
As for the Mather set, it involves the supports of all Mather's measures, and is defined to be
\begin{equation}
\label{def-M}
\widetilde{\mathcal{M}}_P :=   \overline{ \bigcup_{d\mu_P} {\rm supp} \ d\mu_P }.
\end{equation}
We recall that Mather proved in \cite{M1} that the set $\widetilde{\mathcal{M}}_P$ is not empty, compact and  Lipschitz graphs above $\Bbb T^n$, namely the restriction of $\pi : \Bbb T^n \times \Bbb R^n \rightarrow \Bbb T^n$ to $\widetilde{\mathcal{M}}_P$ is an injective map and 
$
\pi^{-1} : \pi (\widetilde{\mathcal{M}}_P) \rightarrow \widetilde{\mathcal{M}}_P
$
is Lipschitz. The projected Mather set  $\pi (\widetilde{\mathcal{M}}_P)$  is denoted by $\mathcal{M}_P$.\\
By following the Remark 4.11 in \cite{So}, one can take a countably dense set of Mather measures $\{ d\mu_{j,P} \}_{j \in \Bbb N}$ such that 
\begin{equation}
d\bar{\mu}_P := \sum_{j \in \Bbb N} d\mu_{j,P}
\end{equation}
is a Mather measure with full support on the Mather set $\widetilde{\mathcal{M}}_P$. For any fixed Mather measure $d\mu_P$, we denote by 
\begin{equation}
\label{def-dwP}
dw_P := \mathcal{L}_\star (d\mu_P), \quad \quad d \sigma_P :=  \pi_\star (d w_P) = \pi_\star (d\mu_P), 
\end{equation} 
the push forward by the Legendre transform  $\mathcal{L} (x,\xi) = (x,\nabla_\xi L(x,\xi))$ and by the canonical projection $\pi(x,\eta)=x$.

\subsubsection{Aubry sets}
\label{sec-Au}
As for the definition of the Aubry sets $\widetilde{\mathcal{A}}_P$ (in the tangent bundle of a manifold) involving regular $P$-minimizers we refer to \cite{F}; we recall here that its Legendre transform
can be given by
\begin{equation}
\label{def-aubry}
\mathcal{A}^*_P =  \bigcap_{ v \in S^{\mp}_P}  \Big\{  (x, P + \nabla_x v (P,x)) \ | \  x \in \mathbb{T}^n \ {\rm s.t.} \ \exists \  \nabla_x v(P,x)  \Big\}   
\end{equation}
where the intersection is taken over all Lipschitz continuous weak KAM solutions $S^{\mp}_P$ of negative (resp. positive) type of the Hamilton-Jacobi  equation  (\ref{def-eff0}).
This set is invariant under the Hamiltonian dynamics and one has the meaningful inclusion
\begin{equation}
\label{inc-MA}
\mathcal{M}_P^\star :=  \mathcal{L} (\widetilde{\mathcal{M}}_P)  \subseteq \mathcal{A}_P^* .
\end{equation}
The $\mathcal{A}_P^\star$ is compact,  the restriction of $\pi : \Bbb T^n \times \Bbb R^n \rightarrow \Bbb T^n$ to $\mathcal{A}_P^\star$ is an injective map and 
$
\pi^{-1} : \pi (\mathcal{A}_P^\star) \rightarrow \mathcal{A}_P^\star
$
is Lipschitz (see \cite{F}, \cite{So}).

\section{The dynamics of the Wigner transform on the  torus}\label{wtd1}   \markboth{The dynamics of the Wigner transform on the torus}{}
\subsection{The Schr\"odinger equation on the torus}
Let us consider the classical Hamiltonian $H=\frac{1}{2} |\eta|^2 + V(x)$,
with $V \in C^\infty (\Bbb T^n; \Bbb R)$. Thus we have $H \in S^2 (\mathbb{T}^n \times \mathbb{R}^n)$, namely the simbol class described in (\ref{symb00}) with $m=2$. 
We now consider the  Schr\"odinger equation:
\begin{eqnarray}
\label{eqSch} 
i \hbar \partial_t \psi_\hbar (t,x) &=&  {\rm Op}^w_\hbar (H) \psi_\hbar (t,x)
\\
\psi_\hbar (0,x) &=& \varphi_{\hbar}  (x)
\nonumber
\end{eqnarray}
where ${\rm Op}^w_\hbar (H)$ is the Weyl quantization of $H$ as in  (\ref{weyl}). As for the initial datum,  we can  require  $\varphi_{\hbar} \in W^{2,2} (\Bbb T^n;\Bbb C)$ and $\|  \varphi_\hbar \|_{L^2} \le C$ $\forall$ $0 < \hbar \le 1$. The one parameter group of unitary operators $e^{-\frac{i}{\hbar}  \mathrm{Op}^w_{\hbar} (H)  t}$ can be defined on the whole $L^{2}  (\mathbb{T}^n; \Bbb C)$.
In fact,  this is because the Schr\"odinger operator $\hat{H}_\hbar := - \frac{1}{2}\hbar^2 \Delta_x + V(x)$ is coinciding with ${\rm Op}^w_\hbar (H)$. This is the content of the Lemma \ref{equi-op} shown in the Appendix.

\subsection{The equation for the Wigner transform}
\noindent
In this section we provide a result on the equation for the Wigner transform of the solution of the Schr\"odinger equation written on the torus. The well known arguments  within the framework of the Weyl quantization on $\Bbb R^n$ (see  \cite{A-F-G}, \cite{A-F-P}, \cite{L-P}) must  be adapted for the Weyl quantization on $\Bbb T^n$.\\

\noindent
The first result reads as follows
\begin{proposition}
\label{TH21}
Let $\psi_\hbar$ be the solution of  (\ref{eqSch}), and $f \in C^\infty ([0,t] \times \mathbb{T}^n \times \Bbb R^n;\Bbb R)$ such that $\forall s \in [0,t]$ it holds $f(s,\cdot) \in A$  as in Def \ref{def-wc}. Then, 
\begin{equation}
\label{w-trans}
\int_0^t \sum_{ \eta \in \frac{\hbar}{2} \mathbb{Z}^n}\int_{\mathbb{T}^n}  \Big[ \Big( \partial_s f    +   \eta \cdot  \nabla_x  f   \Big) (s,x,\eta) W_\hbar \psi_\hbar (s,x,\eta) +   f (s,x,\eta)    \mathcal{E}_\hbar \psi_\hbar (s,x,\eta) \Big] dxds = 0
\\
\end{equation}
where 
\begin{eqnarray}
\label{E0}
\mathcal{E}_\hbar \psi_\hbar (s,x,\eta) := \frac{i}{(2\pi)^n\hbar}   \int_{\Bbb{T}^n} \displaystyle{e^{ 2 \frac{i}{\hbar} \langle z,\eta \rangle} }  \{V(x+z) - V(x-z) \}  \psi_\hbar (s,x-z)\overline{\psi}_\hbar (s,x+z)  dz. 
\end{eqnarray}
\end{proposition}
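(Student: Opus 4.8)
The plan is to differentiate the expectation value $\langle \psi_\hbar(s), {\rm Op}^w_\hbar(f(s,\cdot)) \psi_\hbar(s)\rangle_{L^2}$ in the time variable $s$, integrate the resulting identity over $[0,t]$, and then identify the individual terms with the three pieces appearing in \eqref{w-trans}. Concretely, setting $A_\hbar(s) := {\rm Op}^w_\hbar(f(s,\cdot,\cdot))$ and using that $\psi_\hbar(s) = e^{-\frac{i}{\hbar}{\rm Op}^w_\hbar(H)s}\varphi_\hbar$ solves \eqref{eqSch}, one has
\begin{equation}
\frac{d}{ds}\langle \psi_\hbar(s), A_\hbar(s)\psi_\hbar(s)\rangle_{L^2} = \langle \psi_\hbar(s), \partial_s A_\hbar(s)\psi_\hbar(s)\rangle_{L^2} + \frac{i}{\hbar}\langle \psi_\hbar(s), [\,{\rm Op}^w_\hbar(H), A_\hbar(s)\,]\psi_\hbar(s)\rangle_{L^2}.
\end{equation}
The first term on the right, by the Wigner-distribution identity \eqref{med4}, equals $\sum_{\eta}\int_{\Bbb T^n} \partial_s f(s,x,\eta) W_\hbar\psi_\hbar(s,x,\eta)\,dx$. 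The time integral of the left-hand side telescopes; since $f(s,\cdot)\in A \subset S^0$ the operators are $L^2$-bounded by Theorem \ref{Th-Bound0}, so all the pairings are finite and the boundary terms are controlled. The heart of the matter is therefore the commutator term.

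For the commutator I would split $H = \frac{1}{2}|\eta|^2 + V(x)$ into its kinetic and potential parts. For the kinetic part $H_0(\eta) = \frac12|\eta|^2$, one expects $\frac{i}{\hbar}[{\rm Op}^w_\hbar(H_0), A_\hbar(s)] = {\rm Op}^w_\hbar(\eta\cdot\nabla_x f(s,\cdot)) + (\text{lower order})$; in fact, because $H_0$ is a quadratic polynomial in $\eta$ with no $x$-dependence, the Moyal expansion of Theorem \ref{Th-comp} truncates exactly, so $\frac{i}{\hbar}\{H_0, f\}_M = \{H_0, f\} = \eta\cdot\nabla_x f$ with no remainder at all. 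Pairing with $W_\hbar\psi_\hbar$ via \eqref{med4} produces exactly the $\eta\cdot\nabla_x f$ term of \eqref{w-trans}. For the potential part, I would compute $\frac{i}{\hbar}[{\rm Op}^w_\hbar(V), A_\hbar(s)]$ directly from the integral kernel \eqref{weyl} rather than via the symbol calculus: writing out $\langle \psi_\hbar(s), \frac{i}{\hbar}[{\rm Op}^w_\hbar(V), A_\hbar(s)]\psi_\hbar(s)\rangle_{L^2}$ and using the definition \eqref{WignerT} of the Wigner transform, one gets precisely $\sum_\eta\int_{\Bbb T^n} f(s,x,\eta)\,\mathcal{E}_\hbar\psi_\hbar(s,x,\eta)\,dx$ with $\mathcal{E}_\hbar$ as in \eqref{E0} — this is the standard computation showing that the potential contributes the finite-difference quotient $\frac{1}{\hbar}\{V(x+z)-V(x-z)\}$ paired against $e^{2i\langle z,\eta\rangle/\hbar}\psi_\hbar(x-z)\overline{\psi}_\hbar(x+z)$. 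Here the key technical point is that since $V$ is $2\pi$-periodic in $x$, $V(x\pm z)$ is again periodic and the integral over $\Bbb T^n$ makes sense; the toroidal Fourier expansion of $V$ can be used to make the manipulation rigorous term by term.

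Assembling: integrating $\frac{d}{ds}\langle\psi_\hbar(s),A_\hbar(s)\psi_\hbar(s)\rangle$ over $[0,t]$ gives
\begin{equation}
\langle\psi_\hbar(t),A_\hbar(t)\psi_\hbar(t)\rangle_{L^2} - \langle\psi_\hbar(0),A_\hbar(0)\psi_\hbar(0)\rangle_{L^2} = \int_0^t\!\sum_\eta\int_{\Bbb T^n}\!\Big[\big(\partial_s f + \eta\cdot\nabla_x f\big)W_\hbar\psi_\hbar + f\,\mathcal{E}_\hbar\psi_\hbar\Big]dx\,ds.
\end{equation}
To obtain \eqref{w-trans} as stated (with right-hand side $0$) one must additionally observe that the hypotheses force the left-hand side to vanish; presumably $f$ is to be taken compactly supported in $s\in(0,t)$, or else \eqref{w-trans} is meant as the weak-formulation identity with the boundary terms implicitly absorbed — I would state the boundary-term version and then specialize. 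The main obstacle I anticipate is purely technical: justifying the interchange of the $s$-derivative with the (infinite) sum over $\eta\in\frac\hbar2\Bbb Z^n$ and the $x$-integral, and the use of the exact (non-asymptotic) symbol identity for $H_0$ on the torus — this requires knowing that $\psi_\hbar(s)\in W^{2,2}(\Bbb T^n)$ for all $s$ (which follows from $\varphi_\hbar\in W^{2,2}$ and the fact that ${\rm Op}^w_\hbar(H) = -\frac12\hbar^2\Delta_x + V$ generates a strongly continuous group preserving $W^{2,2}$, cf. Lemma \ref{equi-op}), so that ${\rm Op}^w_\hbar(H)\psi_\hbar(s)\in L^2$ and all pairings are differentiable in $s$ with the Leibniz rule valid. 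Everything else is the standard Wigner-equation computation transplanted from $\Bbb R^n$ to $\Bbb T^n$, with the periodicity of $V$ making the potential term well-defined.
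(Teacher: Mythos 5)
Your proposal is correct in substance, but it follows a genuinely different route from the paper. You work on the observable side: you differentiate $\langle \psi_\hbar(s),\mathrm{Op}^w_\hbar(f(s,\cdot))\psi_\hbar(s)\rangle_{L^2}$, split $H$ into kinetic and potential parts, use the exact truncation of the Moyal expansion for the quadratic symbol $\tfrac12|\eta|^2$, and compute the potential commutator from the kernel. The paper instead works on the state side: it differentiates the integrand $\psi_\hbar(t,x-z)\overline{\psi}_\hbar(t,x+z)$ of the Wigner transform directly, substitutes the Schr\"odinger equation, converts the Laplacian difference into $\mathrm{div}_x\nabla_z$ of that product, and matches it with $\eta\cdot\nabla_x W_\hbar\psi_\hbar$ by an integration by parts in $z$ --- no operator calculus at all, only Leibniz and integration by parts. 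The trade-off is real: the paper's route needs nothing beyond elementary identities, while yours must not invoke Theorem \ref{Th-comp} as a black box, since that theorem only provides an asymptotic expansion with an $O(\hbar)$ remainder, whereas Proposition \ref{TH21} is an exact identity at fixed $\hbar$; you correctly anticipate this by insisting on the exact (non-asymptotic) identity for the kinetic part and on a direct kernel computation for the potential part, so the gap is only the short verification that the toroidal Weyl calculus (e.g.\ via the commutator $[-\tfrac{\hbar^2}{2}\Delta, U_\hbar(q,p)]$ in the representation \eqref{Q2}) reproduces exactly $\mathrm{Op}^w_\hbar(\eta\cdot\nabla_x f)$, which indeed it does. (Amusingly, your strategy is precisely the one the paper uses later for Theorem \ref{TH41}, where only the $\hbar\to0$ limit is needed and the asymptotic calculus suffices.) Your remark about the boundary terms at $s=0$ and $s=t$ is well taken: as stated, with $f\in C^\infty([0,t]\times\cdots)$ and no vanishing condition at the endpoints, the right-hand side of \eqref{w-trans} should be $\sum_\eta\int f W_\hbar\psi_\hbar\,dx\big|_{s=0}^{s=t}$ rather than $0$; the paper's own proof passes from the pointwise-in-time identity to the time-integrated one without addressing this, so your decision to state the boundary-term version and then specialize is the right repair, not a defect of your argument.
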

\begin{proof}
We interpret all the subsequent partial derivatives in the distributional sense of $A^\prime$.  
To begin,
\begin{eqnarray}
\partial_t W_\hbar \psi  &=&  (2\pi)^{-n} \int_{\Bbb{T}^n} \displaystyle{e^{ 2 \frac{i}{\hbar} \langle z,\eta \rangle} }  \partial_t  \psi_\hbar (t,x-z)\overline{\psi}_\hbar (t,x+z)  dz
\nonumber\\
&+&  (2\pi)^{-n} \int_{\Bbb{T}^n} \displaystyle{e^{ 2 \frac{i}{\hbar} \langle z,\eta \rangle} }  \psi_\hbar (t,x-z) \partial_t  \overline{\psi}_\hbar  (t,x+z) dz.
\label{partialW}
\end{eqnarray}
Since $\psi_\hbar$ solves the Schr\"odinger equation, it follows
\begin{eqnarray}
&& \partial_t  \psi_\hbar (t,x-z)\overline{\psi} (t,x+z) + \psi_\hbar (t,x-z) \partial_t  \overline{\psi}_\hbar  (t,x+z)
\\
&=&  \frac{i\hbar}{2} [  (\Delta_x  \psi_\hbar (t,x-z) ) \overline{\psi}_\hbar  (t,x+z)  -  \psi_\hbar (t,x-z)  \Delta_x \overline{\psi}_\hbar  (t,x+z) ]
\label{g43}\\
&+&  \ i \hbar^{-1}  [   V(x+z) - V(x-z)  ] \psi_\hbar (t,x-z)\overline{\psi}_\hbar (t,x+z).
\nonumber
\end{eqnarray} 
Now recall the simple equality $(\Delta_x f)  g -  f  \Delta_x g = {\rm div}_x [(\nabla_x f)  g -  f  \nabla_x g]$, so that 
\begin{eqnarray}
&& (\Delta_x  \psi_\hbar (t,x-z) ) \overline{\psi}_\hbar  (t,x+z)  -  \psi_\hbar (t,x-z)  \Delta_x \overline{\psi}_\hbar  (t,x+z) 
\nonumber\\
&=& 2 \, {\rm div}_x \nabla_z  [  \psi_\hbar (t,x-z)  \overline{\psi}_\hbar  (t,x+z)   ].
\label{s-equi}
\end{eqnarray}
Then, insert (\ref{s-equi}) in (\ref{g43}), so that 
\begin{eqnarray}
&& \partial_t  \psi_\hbar (t,x-z)\overline{\psi} (t,x+z) + \psi_\hbar (t,x-z) \partial_t  \overline{\psi}_\hbar  (t,x+z)
\\
&=&  i \hbar   \  {\rm div}_x \nabla_z  [  \psi_\hbar (t,x-z)  \overline{\psi}_\hbar  (t,x+z)   ] 
 + \frac{ i}{ \hbar} \ [   V(x+z) - V(x-z)  ] \psi_\hbar (t,x-z)\overline{\psi}_\hbar (t,x+z).
\label{W-t}
\end{eqnarray} 
Moreover, an easy computation involving integration by parts shows
\begin{equation}
\eta \cdot \nabla_x W_\hbar \psi_\hbar =  - i \hbar \ (2\pi)^{-n}\int_{\Bbb{T}^n} e^{2 \frac{i}{\hbar} \langle z,\eta\rangle} {\rm div}_z \nabla_x  [  \psi_\hbar (t,x-z)  \overline{\psi}  (t,x+z)   ]dz .
\label{W-eta}
\end{equation}
Hence, by  (\ref{W-t}) and (\ref{W-eta}) we directly get the statement.
\end{proof}

\begin{lemma}
\label{reg-delta}
Let $\epsilon >0$ and $g(\epsilon,\cdot \,): \Bbb T^n \longrightarrow \Bbb R^+$  defined as
\begin{equation}
g (\epsilon,y)  := \frac{1}{(2\pi)^{n}} \sum_{\kappa_0 \in \Bbb Z^n}  e^{-\epsilon |\kappa_0|^2}  \displaystyle{e^{ - i \langle y , \kappa_0 \rangle} } = \frac{1}{(2\pi)^{n}} \sum_{\xi \in \Bbb Z^n}  \Big(  \frac{\pi}{ \epsilon} \Big)^{n\over 2}  e^{-   |\xi - y|^2  (4\epsilon)^{-1}}.
\end{equation}
Then, $\forall \psi \in C^\infty (\Bbb T^n;\Bbb C)$
\begin{equation}
\lim_{\epsilon \rightarrow 0^+} \int_{\Bbb T^n} g (\epsilon,y - y_0)  \psi(y_0)  dy_0 = \psi(y).
\end{equation}
\end{lemma}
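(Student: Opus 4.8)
The plan is to recognize $g(\epsilon,\cdot)$ as a summability (Jacobi--theta) kernel on $\Bbb T^n$ and to carry out the whole computation through Fourier series, in the spirit of the rest of the paper. First I would observe that, reading off the first (exponential) representation, $g(\epsilon,\cdot)$ is the $2\pi$-periodic function whose Fourier coefficients are $\widehat{g(\epsilon,\cdot)}_\alpha=(2\pi)^{-n}e^{-\epsilon|\alpha|^2}$, $\alpha\in\Bbb Z^n$; the stated equality with the second (Gaussian) representation is exactly the Poisson summation formula, and it is this Gaussian form that makes the positivity $g(\epsilon,\cdot)>0$ manifest. The integral to be studied is then the torus convolution $(g(\epsilon,\cdot)\ast\psi)(y):=\int_{\Bbb T^n}g(\epsilon,y-y_0)\psi(y_0)\,dy_0$.

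Next I would plug the exponential series for $g(\epsilon,y-y_0)$ into the integral and interchange sum and integral — legitimate for each fixed $\epsilon>0$, since $\sum_{\kappa_0}e^{-\epsilon|\kappa_0|^2}<\infty$ and $\psi$ is bounded on $\Bbb T^n$ — to obtain, with the Fourier convention $\psi(y)=\sum_{\alpha\in\Bbb Z^n}\widehat\psi_\alpha\,e^{i\langle y,\alpha\rangle}$ used in the paper,
\[
\int_{\Bbb T^n}g(\epsilon,y-y_0)\psi(y_0)\,dy_0=\sum_{\alpha\in\Bbb Z^n}e^{-\epsilon|\alpha|^2}\,\widehat\psi_\alpha\,e^{i\langle y,\alpha\rangle}.
\]
(Equivalently, convolution on the torus multiplies Fourier coefficients, so the $\alpha$-th coefficient of $g(\epsilon,\cdot)\ast\psi$ equals $(2\pi)^n\cdot(2\pi)^{-n}e^{-\epsilon|\alpha|^2}\widehat\psi_\alpha=e^{-\epsilon|\alpha|^2}\widehat\psi_\alpha$.)

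Finally I would let $\epsilon\to0^+$. For each fixed $\alpha$ one has $e^{-\epsilon|\alpha|^2}\to1$, while $|e^{-\epsilon|\alpha|^2}\widehat\psi_\alpha|\le|\widehat\psi_\alpha|$; since $\psi\in C^\infty(\Bbb T^n)$ the coefficients $(\widehat\psi_\alpha)_{\alpha\in\Bbb Z^n}$ are rapidly decreasing, hence absolutely summable, so dominated convergence over the counting measure on $\Bbb Z^n$ gives $\sum_\alpha e^{-\epsilon|\alpha|^2}\widehat\psi_\alpha e^{i\langle y,\alpha\rangle}\to\sum_\alpha\widehat\psi_\alpha e^{i\langle y,\alpha\rangle}=\psi(y)$, in fact uniformly in $y$. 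This is the claim. There is no real obstacle; the one point that needs care is precisely this interchange of limit and summation, which is where the smoothness of $\psi$ enters. If one wanted to run the argument for merely continuous $\psi$, one would instead use the three approximate-identity properties $g(\epsilon,\cdot)\ge0$, $\int_{\Bbb T^n}g(\epsilon,y)\,dy=1$ (orthogonality, from the exponential form), and $\int_{\Bbb T^n\setminus U}g(\epsilon,y)\,dy\to0$ for every neighbourhood $U$ of $0$ (concentration, from the Gaussian form), together with the uniform continuity of $\psi$ on the compact torus.
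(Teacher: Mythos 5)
Your proof is correct, but it takes a genuinely different route from the paper's. The paper proves the convergence by exploiting the Gaussian representation: after establishing the second form of $g(\epsilon,\cdot)$ via Poisson summation (which you also invoke, but only to justify the identity of the two representations and the positivity), it unfolds the periodized Gaussian sum into a single integral over $\Bbb R^n$ against $\psi\,\mathcal{X}_{Q_n}$ and concludes by the classical Gaussian approximate-identity argument on $\Bbb R^n$. You instead stay entirely on the Fourier side: the convolution acts as the Fourier multiplier $e^{-\epsilon|\alpha|^2}$, and since $\psi\in C^\infty(\Bbb T^n)$ its coefficients are absolutely summable, so dominated convergence on $\Bbb Z^n$ gives the limit, in fact uniformly in $y$. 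Each approach has its advantages: yours is shorter, never needs the Gaussian form for the convergence step, and delivers uniform convergence for free, but it genuinely uses the smoothness (or at least absolute summability of the Fourier coefficients) of $\psi$; the paper's kernel argument is the one that generalizes directly to merely continuous $\psi$, which is exactly the fallback you correctly sketch at the end via the three approximate-identity properties. Both interchanges you perform (sum/integral for fixed $\epsilon>0$, and limit/sum as $\epsilon\to0^+$) are properly justified, so there is no gap.
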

\begin{proof}
Let $G (\kappa_0,\epsilon,y) := e^{-\epsilon |\kappa_0|^2}  e^{ - i \langle y , \kappa_0 \rangle} $, then $\widehat{G} (\xi,\epsilon,y) := \int_{\Bbb R^n} e^{-i  \langle \xi , \kappa_0 \rangle} G (\kappa_0,\epsilon,y) d\kappa_0$ reads 
$$ 
\widehat{G} (\xi,\epsilon,y) =  \Big(  \frac{\pi}{ \epsilon} \Big)^{n\over 2} e^{-    | \xi - y|^2  (4\epsilon)^{-1}}
$$
By applying the Poisson's summation formula (see for example \cite{D-K}), 
\begin{equation}
\label{ge2}
g (\epsilon,y)  = \frac{1}{(2\pi)^{n}} \sum_{\xi \in \Bbb Z^n}  \Big(  \frac{\pi}{ \epsilon} \Big)^{n\over 2}  e^{-   |\xi - y|^2  (4\epsilon)^{-1}} = \frac{1}{(2\pi)^{n}} \sum_{\xi \in \Bbb Z^n}  \Big(  \frac{\pi}{ \epsilon} \Big)^{n\over 2}  e^{-   |2\pi \xi - 2\pi y|^2  (16 \pi^2 \epsilon)^{-1}}    .
\end{equation}
Now recall the identification $\Bbb T^n = (\Bbb R / 2\pi \Bbb Z)^n$, fix the periodicity domain $y_0 \in Q_n := [0,2\pi]^n$, so  that
\begin{eqnarray}
\lefteqn{ \lim_{\epsilon \rightarrow 0^+} \int_{Q_n}  g (\epsilon,y - y_0 )  \psi(y_0 )  dy_0 }
\\
&=&\lim_{\epsilon \rightarrow 0^+} \Big(  \frac{1}{ 4\pi \epsilon} \Big)^{n\over 2}    \sum_{\xi \in \Bbb Z^n} \int_{\Bbb R^n}  e^{-   |2\pi \xi - 2\pi (y - y_0)|^2  (16 \pi^2 \epsilon)^{-1}}   \psi(y_0)  \mathcal{X}_{Q_n} (y_0) dy_0 
\\
&=&\lim_{\epsilon \rightarrow 0^+} \Big(  \frac{1}{ 4\pi \epsilon} \Big)^{n\over 2}    \int_{\Bbb R^n}  e^{-   |y - y_0 |^2  (4  \epsilon)^{-1}}   \psi(y_0)  \mathcal{X}_{Q_n} (y_0) dy_0
=  \psi(y).
\end{eqnarray} 
\end{proof}

In the following, we provide the evolution equation for the Wigner transform $W_\hbar \psi_\hbar$ of the solution of the Schr\"odinger's equation on the torus, 
\begin{eqnarray}
\label{eq-transW}
\partial_t W_\hbar \psi_\hbar + \eta \cdot \nabla_x W_\hbar \psi_\hbar + \mathcal{K}_\hbar \star_\eta W_\hbar \psi_\hbar = 0
\end{eqnarray}
written in the distributional sense. More precisely, $\forall f \in C^\infty ([0,t] \times \mathbb{T}^n \times \Bbb R^n;\Bbb R)$ such that  $f(s,\cdot) \in A$ $\forall s \in [0,t]$ as in Def \ref{def-wc} it holds
\begin{equation}
\label{w-trans01}
\int_0^t \sum_{ \eta \in \frac{\hbar}{2} \mathbb{Z}^n}\int_{\mathbb{T}^n}  \Big[ \Big( \partial_s f    +   \eta \cdot  \nabla_x  f   \Big) (s,x,\eta) W_\hbar \psi_\hbar (s,x,\eta) +   f  (s,x,\eta) \  \mathcal{K}_\hbar \star_\eta  W_\hbar \psi_\hbar   (s,x,\eta) \Big] dxds = 0
\\
\end{equation}
where for $ \eta \in \frac{\hbar}{2} \Bbb Z^n$
\begin{eqnarray}
\label{Kappa}
 \mathcal{K}_\hbar (s,x,\eta) &:=& \frac{i}{(2\pi)^n\hbar}   \int_{\Bbb{T}^n} \displaystyle{e^{ 2 \frac{i}{\hbar} \langle z,\eta \rangle} }  \{V(x+z) - V(x-z) \}   dz, 
 \\
 \mathcal{K}_\hbar \star_\eta  W_\hbar \psi_\hbar   (s,x,\eta) &:=& \sum_{\kappa_0 \in \Bbb Z^n}  \mathcal{K}_\hbar \Big(s,x, \eta - \frac{\hbar}{2} \kappa_0 \Big)  W_\hbar \psi_\hbar   \Big(s,x,\frac{\hbar}{2} \kappa_0 \Big) .
 \label{Kappa2}
\end{eqnarray}

\begin{theorem}
\label{TH22}
Let $\psi_\hbar$ be the solution of  (\ref{eqSch}). Then, it holds 
\begin{equation}
\label{w-trans}
\partial_t W_\hbar \psi_\hbar + \eta \cdot \nabla_x W_\hbar \psi_\hbar + \mathcal{K}_\hbar \star_\eta W_\hbar \psi_\hbar = 0
\end{equation}
in the distributional sense as in (\ref{w-trans01}).
\begin{proof}
We exhibit a short proof based on the previous result, namely we simply show that convolution  (\ref{Kappa2}) is well defined and coincides with the remainder term (\ref{E0}). Since $V \in C^\infty (\Bbb T^n;\Bbb R)$, the related Fourier components $V_\omega := (2\pi)^{-n} \int_{\Bbb T^n}  e^{i \omega z} V(z) dz$, $\omega \in \Bbb Z^n$, fulfill $|V_\omega| \le c_j \langle \omega \rangle^{j}$ $\forall j \in \Bbb N$ and some $c_j >0$. An easy computation shows that
\begin{equation}
 \mathcal{K}_\hbar \Big(s,x, \frac{\hbar}{2} \kappa \Big) =   \frac{i}{(2\pi)^n\hbar}  (e^{- i \kappa \cdot x} V_\kappa -  e^{+ i \kappa \cdot x} V_\kappa^\star ), \quad \kappa \in \Bbb Z^n.
\end{equation}
Moreover, $\| W_\hbar \psi_\hbar   (s,\cdot)  \|_\infty \le (2\pi)^{-n} C^2$ $\forall s \in \Bbb R$. Thus, the series in   (\ref{Kappa2})  is absolutely convergent, and we can write down the regularization (useful in the subsequent computations): 
\begin{eqnarray}
\mathcal{K}_\hbar \star_\eta  W_\hbar \psi_\hbar  &=& \lim_{\epsilon \rightarrow 0^+} \sum_{\kappa_0 \in \Bbb Z^n} e^{-\epsilon |\kappa_0|^2} \mathcal{K}_\hbar \Big(s,x, \eta - \frac{\hbar}{2} \kappa_0 \Big)  W_\hbar \psi_\hbar   \Big(s,x,\frac{\hbar}{2} \kappa_0 \Big) .
\end{eqnarray}
We look at the regularization:
\begin{eqnarray}
\label{247-k}
\lefteqn{ \sum_{\kappa_0 \in \Bbb Z^n} e^{-\epsilon |\kappa_0|^2}  \mathcal{K}_\hbar \Big(s,x, \eta - \frac{\hbar}{2} \kappa_0 \Big)  W_\hbar \psi_\hbar   \Big(s,x,\frac{\hbar}{2} \kappa_0 \Big) }
\\
&=& \sum_{\kappa_0 \in \Bbb Z^n}   e^{-\epsilon |\kappa_0|^2}  \frac{i}{(2\pi)^{n}\hbar}    \int_{\Bbb{T}^n} \displaystyle{e^{ 2 \frac{i}{\hbar} \langle z,\eta - \frac{\hbar}{2} \kappa_0 \rangle} }  \{V(x+z) - V(x-z) \}   dz 
\\
&\times&
\frac{1}{(2\pi)^{n}}   \int_{\Bbb{T}^n} e^{2 \frac{i}{\hbar} \langle \tilde{z},\frac{\hbar}{2} \kappa_0 \rangle}\psi_\hbar (s,x-\tilde{z}) \psi_\hbar^\star (s,x+\tilde{z}) d \tilde{z}
\nonumber
\\
&=& \frac{i}{(2\pi)^{n}\hbar} \int_{\Bbb T^n} \int_{\Bbb{T}^n}  \displaystyle{e^{ 2 \frac{i}{\hbar} \langle z,\eta  \rangle} }  \, \Big[ \frac{1}{(2\pi)^{n}} \sum_{\kappa_0 \in \Bbb Z^n}  e^{-\epsilon |\kappa_0|^2}  \displaystyle{e^{ - i \langle z - \tilde{z} , \kappa_0 \rangle} }   \Big]
\\
&\times& \{V(x+z) - V(x-z) \}   \psi_\hbar (s,x-\tilde{z}) \psi_\hbar^\star (s,x+\tilde{z}) dz d \tilde{z}   \, 
\nonumber
\end{eqnarray}
However, for any fixed $\epsilon > 0$, the function
\begin{equation}
g (\epsilon, z- \tilde{z})  := \frac{1}{(2\pi)^{n}} \sum_{\kappa_0 \in \Bbb Z^n}  e^{-\epsilon |\kappa_0|^2}  \displaystyle{e^{ - i \langle z - \tilde{z} , \kappa_0 \rangle} }  
\end{equation}
defines a tempered distribution on $C^\infty (\Bbb T^n;\Bbb C)$ converging to $\delta(z- \tilde{z})$ as $\epsilon \rightarrow 0^+$ (see Lemma \ref{reg-delta}).\\ 
To conclude,
\begin{eqnarray}
\nonumber
\lefteqn{\mathcal{K}_\hbar \star_\eta  W_\hbar \psi_\hbar  }
\\
&=&  \lim_{\epsilon \rightarrow 0^+}  \frac{i}{(2\pi)^n\hbar} \int_{\Bbb T^n} \int_{\Bbb{T}^n}  \displaystyle{e^{ 2 \frac{i}{\hbar} \langle z,\eta  \rangle} } 
g (\epsilon, z- \tilde{z})  \{V(x+z) - V(x-z) \}   \psi_\hbar (s,x-\tilde{z}) \psi_\hbar^\star (s,x+\tilde{z}) dz  d \tilde{z}  \, 
\nonumber
\\
&=&  \frac{i}{(2\pi)^n\hbar} \int_{\Bbb T^n}  \lim_{\epsilon \rightarrow 0^+} \int_{\Bbb{T}^n}  \displaystyle{e^{ 2 \frac{i}{\hbar} \langle z,\eta  \rangle} } 
g (\epsilon, z- \tilde{z})  \{V(x+z) - V(x-z) \}   \psi_\hbar (s,x-\tilde{z}) \psi_\hbar^\star (s,x+\tilde{z}) dz d \tilde{z}   \, 
\nonumber
\\
&=&  \frac{i}{(2\pi)^n\hbar}  \int_{\Bbb{T}^n}  \displaystyle{e^{ 2 \frac{i}{\hbar} \langle \tilde{z} ,\eta  \rangle} }  \{V(x+\tilde{z}) - V(x-\tilde{z}) \}   \psi_\hbar (s,x-\tilde{z}) \psi_\hbar^\star (s,x+\tilde{z}) d\tilde{z}
=: \mathcal{E}_\hbar \psi_\hbar. 
\nonumber
\end{eqnarray}
\end{proof}

\end{theorem}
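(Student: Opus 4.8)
The plan is to reduce the statement to Proposition \ref{TH21}. That proposition already furnishes the weak identity (\ref{w-trans01}) but with the potential contribution written as the remainder term $\mathcal{E}_\hbar \psi_\hbar$ of (\ref{E0}); hence it suffices to show that the convolution $\mathcal{K}_\hbar \star_\eta W_\hbar \psi_\hbar$, defined by the series (\ref{Kappa2}), is absolutely convergent and equals $\mathcal{E}_\hbar \psi_\hbar$ pointwise in $(s,x,\eta)$ for $\eta \in \frac{\hbar}{2}\Bbb Z^n$. Everything below is carried out at a fixed $\hbar$, with $\psi_\hbar(s,\cdot) \in C^\infty(\Bbb T^n)$, so that no uniformity in $\hbar$ is required and Lemma \ref{reg-delta} is directly applicable.

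First I would make the kernel explicit: inserting the Fourier expansion $V(y) = \sum_{\omega \in \Bbb Z^n} V_\omega e^{-i\langle \omega,y\rangle}$ into (\ref{Kappa}) and performing the $z$-integral over the torus gives $\mathcal{K}_\hbar(s,x,\frac{\hbar}{2}\kappa) = \frac{i}{(2\pi)^n\hbar}(e^{-i\kappa\cdot x}V_\kappa - e^{i\kappa\cdot x}V_\kappa^\star)$ for $\kappa \in \Bbb Z^n$. Since $V \in C^\infty(\Bbb T^n)$ its Fourier coefficients are rapidly decreasing, hence so is $\kappa \mapsto \mathcal{K}_\hbar(s,x,\frac{\hbar}{2}\kappa)$; combined with the uniform bound $\|W_\hbar\psi_\hbar(s,\cdot)\|_\infty \le (2\pi)^{-n}\|\psi_\hbar\|_{L^2}^2$ coming from (\ref{sup-W}), this makes the series (\ref{Kappa2}) absolutely convergent. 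Consequently $\mathcal{K}_\hbar \star_\eta W_\hbar \psi_\hbar$ may be computed as the limit, as $\epsilon \to 0^+$, of the Gaussian-regularized partial sums $\sum_{\kappa_0 \in \Bbb Z^n} e^{-\epsilon|\kappa_0|^2}\, \mathcal{K}_\hbar(s,x,\eta-\tfrac{\hbar}{2}\kappa_0)\, W_\hbar\psi_\hbar(s,x,\tfrac{\hbar}{2}\kappa_0)$.

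Next I would substitute the integral representations (\ref{Kappa}) for $\mathcal{K}_\hbar$ and (\ref{WignerT23}) for $W_\hbar\psi_\hbar$ into the regularized sum. Thanks to the damping factor $e^{-\epsilon|\kappa_0|^2}$ all sums and integrals may be freely interchanged; collecting the $\kappa_0$-dependence yields precisely the kernel $g(\epsilon,z-\tilde z) = \frac{1}{(2\pi)^n}\sum_{\kappa_0} e^{-\epsilon|\kappa_0|^2} e^{-i\langle z-\tilde z,\kappa_0\rangle}$ of Lemma \ref{reg-delta}, multiplied by $e^{2\frac{i}{\hbar}\langle z,\eta\rangle}\{V(x+z)-V(x-z)\}\psi_\hbar(s,x-\tilde z)\psi_\hbar^\star(s,x+\tilde z)$ and integrated over $(z,\tilde z) \in \Bbb T^n \times \Bbb T^n$. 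Letting $\epsilon \to 0^+$ and invoking Lemma \ref{reg-delta} in the $\tilde z$ variable — against the smooth function $\tilde z \mapsto \psi_\hbar(s,x-\tilde z)\psi_\hbar^\star(s,x+\tilde z)$, the factors $e^{2\frac{i}{\hbar}\langle z,\eta\rangle}$ and $V(x\pm z)$ being smooth and bounded — collapses the double integral to the single integral defining $\mathcal{E}_\hbar\psi_\hbar$ in (\ref{E0}). This proves $\mathcal{K}_\hbar \star_\eta W_\hbar \psi_\hbar = \mathcal{E}_\hbar \psi_\hbar$, and inserting this identity into the conclusion of Proposition \ref{TH21} gives the weak formulation (\ref{w-trans01}), i.e. the transport equation in the distributional sense.

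The step I expect to require the most care is the interchange of the limit $\epsilon \to 0^+$ with the outer integration in $z$. This is handled by dominated convergence: for each fixed $\epsilon$ the inner integral against $g(\epsilon,z-\tilde z)$ is bounded uniformly in $z$ and $\epsilon$ by $\sup_{\tilde z}|\psi_\hbar(s,x-\tilde z)\psi_\hbar^\star(s,x+\tilde z)|$ — the standard approximate-identity estimate $\big|\int_{\Bbb T^n} g(\epsilon,z-\tilde z) h(\tilde z)\, d\tilde z\big| \le \|h\|_\infty$, which one reads off from the Gaussian form of $g$ given by the Poisson summation identity in Lemma \ref{reg-delta} — and converges pointwise in $z$ to $\psi_\hbar(s,x-z)\psi_\hbar^\star(s,x+z)$; the bounded factor $\{V(x+z)-V(x-z)\}$ then supplies an integrable majorant on the compact torus. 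All the smoothness needed to apply Lemma \ref{reg-delta} is available since we work at fixed $\hbar$ with $C^\infty$ data, so the argument closes.
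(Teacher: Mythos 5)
Your proposal is correct and follows essentially the same route as the paper's own proof: reduce to Proposition \ref{TH21}, compute $\mathcal{K}_\hbar$ explicitly via the Fourier coefficients of $V$ to get absolute convergence of the series (\ref{Kappa2}), insert the Gaussian regularization, and invoke Lemma \ref{reg-delta} to collapse the double integral to $\mathcal{E}_\hbar\psi_\hbar$ (whether the delta is applied in $z$ or in $\tilde z$ is immaterial by symmetry of the kernel). Your explicit dominated-convergence justification for interchanging the limit $\epsilon\to 0^+$ with the outer integration is a welcome addition of rigor at a step the paper passes over silently, but it does not constitute a different argument.
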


\section{Semiclassical limits of Wigner transforms on the torus}\label{scwt}    \markboth{Semiclassical limits of Wigner transforms on the torus}{}

\subsection{The Liouville equation}

This section is devoted to the Liouville equation written in the measure sense on $\Bbb T^n \times \Bbb R^n$ solved  by the semiclassical asymptotics of the toroidal Wigner transform. 

\begin{theorem}
\label{TH41}
Let $\psi_\hbar(t) := e^{-\frac{i}{\hbar}  \mathrm{Op}^w_{\hbar} (H)  t} \varphi_\hbar$ where $\varphi_\hbar \in L^2 (\Bbb T^n;\Bbb C)$ and $\|  \varphi_\hbar \|_{L^2} \le C$. Let   $\{w_t \}_{t \in [-T,T]}$  be a limit of  $W_\hbar \psi_\hbar (t)$ in $L^\infty ([-T,+T];A^\prime)$ along a sequence of values of $\hbar\to 0$. Then,  
\begin{equation}
\label{eq-Li1}
\partial_t w_t + \eta \cdot \nabla_x w_t  - \nabla_x V(x) \cdot \nabla_\eta  w_t = 0
\end{equation}
in the distributional sense.
\end{theorem}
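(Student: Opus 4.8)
The strategy is to pass to the limit $\hbar \to 0$ in the distributional identity \eqref{w-trans01} furnished by Theorem \ref{TH22}. I fix a test function $f \in C^\infty_c([0,t] \times \Bbb T^n \times \Bbb R^n;\Bbb R)$ with $f(s,\cdot) \in A$ for all $s$, and I examine the three terms of \eqref{w-trans01} separately. The first two terms, $\int_0^t \sum_\eta \int_{\Bbb T^n} (\partial_s f + \eta \cdot \nabla_x f) W_\hbar \psi_\hbar \, dx\, ds$, are handled directly by the assumed $L^\infty([-T,T];A^\prime)$ convergence $W_\hbar \psi_\hbar \rightharpoonup w_t$ along the chosen subsequence (Lemma \ref{t-depW}): both $\partial_s f$ and $(x,\eta) \mapsto \eta \cdot \nabla_x f$ belong to $A$ (the latter because $\widehat{\eta \cdot \nabla_x f}$ is still compactly supported — multiplication by $\eta$ corresponds to $\partial_p$ on the Fourier side, preserving compact support), so these terms converge to $\int_0^t \int_{\Bbb T^n \times \Bbb R^n} (\partial_s f + \eta \cdot \nabla_x f)\, dw_s(x,\eta)\, ds$, which is exactly the pairing of $w_t$ against $\partial_t$ and $\eta\cdot\nabla_x$ in the distributional sense.

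The heart of the matter is the potential term $\int_0^t \sum_\eta \int_{\Bbb T^n} f(s,x,\eta)\, \mathcal{K}_\hbar \star_\eta W_\hbar \psi_\hbar \, dx\, ds$, which must converge to $-\int_0^t \int_{\Bbb T^n\times\Bbb R^n} \nabla_x V(x)\cdot\nabla_\eta f(s,x,\eta)\, dw_s\, ds$. Here I would move the convolution onto the test function: by the definition \eqref{Kappa2} and a discrete Fubini (justified by the absolute convergence established in the proof of Theorem \ref{TH22}, using $\|W_\hbar\psi_\hbar\|_\infty \le (2\pi)^{-n}C^2$ and the rapid decay of the Fourier coefficients $V_\omega$),
\[
\sum_{\eta \in \frac{\hbar}{2}\Bbb Z^n} f(s,x,\eta)\, \big(\mathcal{K}_\hbar \star_\eta W_\hbar\psi_\hbar\big)(s,x,\eta) = \sum_{\eta \in \frac{\hbar}{2}\Bbb Z^n} \big(\widetilde{\mathcal{K}}_\hbar \star_\eta f\big)(s,x,\eta)\, W_\hbar\psi_\hbar(s,x,\eta),
\]
where $\widetilde{\mathcal K}_\hbar$ is the appropriate reflected kernel. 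Using the explicit formula $\mathcal{K}_\hbar(s,x,\tfrac{\hbar}{2}\kappa) = \tfrac{i}{(2\pi)^n\hbar}(e^{-i\kappa\cdot x}V_\kappa - e^{i\kappa\cdot x}V_\kappa^\star)$ from the proof of Theorem \ref{TH22}, I expand $f$ in its phase-space Fourier representation \eqref{pf-T}; the $\kappa$-sum against $\widehat f(q,p)$ becomes a finite difference in $p$ of step $\hbar$, i.e. $\tfrac{1}{\hbar}(\widehat f(q,p+\hbar\cdot) - \widehat f(q, p - \hbar\cdot)) \to $ a first $p$-derivative, which under $F$ corresponds precisely to $-\nabla_\eta f$ paired against $-\nabla_x V$. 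More concretely, one shows the symbol $\widetilde{\mathcal K}_\hbar \star_\eta f$ converges, uniformly on $\Bbb T^n \times \Bbb R^n$ and with uniformly compact $\eta$-support (hence in $A$ as a function of $(x,\eta)$ for each $s$, and uniformly in $s$), to $-\nabla_x V(x)\cdot\nabla_\eta f(s,x,\eta)$; then the $L^\infty([-T,T];A^\prime)$ convergence of $W_\hbar\psi_\hbar$ closes the argument, absorbing the $o(1)$ error via the uniform bound $\|W_\hbar\psi_\hbar(s,\cdot)\|_{A^\prime} \le (2\pi)^{-n}C^2$.

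The main obstacle is precisely this Taylor-expansion / convolution-to-derivative step for the potential term: one must control the difference quotient uniformly in $\hbar$, which requires $V\in C^\infty$ with its Fourier coefficients decaying faster than any polynomial (available since $V \in C^\infty(\Bbb T^n)$) so that the $\kappa$-sum (equivalently the $\omega$-support of $\widehat{V}$ weighted against the smooth rapidly-decaying $\widehat f$) converges and the remainder is genuinely $O(\hbar)$ in the relevant norm; the compactness of $\mathrm{supp}(\widehat f)$ is what keeps everything finite and makes the limiting object land back in $A$. A secondary technical point is the interchange of the $\epsilon\to 0^+$ regularization (Lemma \ref{reg-delta}) with the limit $\hbar\to 0$, but since by Theorem \ref{TH22} the regularized convolution already equals the honest kernel $\mathcal{E}_\hbar\psi_\hbar$ for each fixed $\hbar$, I can work directly with $\mathcal{E}_\hbar\psi_\hbar$ and avoid that issue entirely. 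Collecting the three limits gives $\int_0^t \int_{\Bbb T^n\times\Bbb R^n}(\partial_s f + \eta\cdot\nabla_x f - \nabla_x V\cdot\nabla_\eta f)\, dw_s\, ds = 0$ for all admissible $f$, which is \eqref{eq-Li1} in the distributional sense. $\hfill\square$
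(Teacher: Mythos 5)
Your proposal is correct in outline, but it follows a genuinely different route from the paper. The paper never passes to the limit in the Wigner equation of Theorem \ref{TH22}: its proof of Theorem \ref{TH41} works in the Heisenberg picture, writing $\frac{d}{dt}\langle \psi_\hbar(t),\mathrm{Op}^w_\hbar(\phi)\psi_\hbar(t)\rangle=(i\hbar)^{-1}\langle\psi_\hbar(t),[\mathrm{Op}^w_\hbar(H),\mathrm{Op}^w_\hbar(\phi)]\psi_\hbar(t)\rangle$, invoking the composition Theorem \ref{Th-comp} to expand the Moyal bracket as $i\hbar\{H,\phi\}+r$ with $r=O(\hbar^2)$ in $S^0$, discarding $\mathrm{Op}^w_\hbar(r)$ via the Calder\'on--Vaillancourt bound of Theorem \ref{Th-Bound0}, and then passing to the limit in the two Wigner distributions $\langle\psi_\hbar,\mathrm{Op}^w_\hbar(\phi)\psi_\hbar\rangle$ and $\langle\psi_\hbar,\mathrm{Op}^w_\hbar(\{H,\phi\})\psi_\hbar\rangle$ using the $A'$ convergence and the observation that $\{H,\phi\}\in A$. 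All the work specific to the potential is thus absorbed into the abstract symbolic calculus, which makes the argument short and immediately extendable to more general Hamiltonian symbols. Your route is the Lions--Paul-style kinetic one: you actually use Theorem \ref{TH22} (which the paper proves but then does not exploit here), transfer the pseudo-convolution $\mathcal{K}_\hbar\star_\eta$ onto the test function, and do the Fourier-side Taylor expansion by hand. This is more elementary and self-contained, at the price of the explicit bookkeeping you correctly identify as the main obstacle: the $O(\hbar^{-1})$ prefactor of $\mathcal{K}_\hbar$ cancels only because $\sum_\kappa(e^{-i\kappa\cdot x}V_\kappa-e^{i\kappa\cdot x}V_\kappa^\star)=V-\overline{V}=0$, and the first-order term of the difference quotient must be controlled uniformly using the rapid decay of $V_\kappa$ against $\sup|\nabla_\eta^2 f|$. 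Two small corrections to your write-up: $\widetilde{\mathcal K}_\hbar\star_\eta f$ does not have compact $\eta$-support (it is only rapidly decaying, which suffices); and the limit $-\nabla_xV\cdot\nabla_\eta f$ belongs to $A$ only as an $L^\infty$-limit of compact-Fourier-support functions, since multiplication by $\nabla_xV$ convolves the compact $q$-support of $\widehat{f}$ with the full (rapidly decaying) Fourier support of $V$ --- but that is exactly what the closure in Definition \ref{def-wc} provides, so the pairing argument closes as you claim.
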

\begin{proof}
To begin, we prove that
\begin{equation}
\label{eq-Liou}
\frac{d}{dt}  \int_{\mathbb{T}^n \times \Bbb R^n}  \phi(x,\eta) dw_t (x,\eta)  +   \int_{\mathbb{T}^n \times \Bbb R^n}   \{ \phi, H \} (x,\eta)  dw_t (x,\eta)   = 0
\end{equation}
for any $\phi \in A$, see (\ref{setA2}).  To this aim, we observe that the Schr\"odinger equation implies 
\begin{equation}
\label{eq-Hg}
\frac{d}{dt} \langle \psi_\hbar (t) , \mathrm{Op}^w_{\hbar} (\phi)  \psi_\hbar (t) \rangle_{L^2} =  (i \hbar)^{-1}  \langle \psi_\hbar (t) ,  [ \mathrm{Op}^w_{\hbar} (H),   \mathrm{Op}^w_{\hbar} (\phi) ]  \psi_\hbar (t) \rangle_{L^2}.
\end{equation}
Hence
\begin{equation}
\label{eq-Hg22}
 \langle \psi_\hbar (t) , \mathrm{Op}^w_{\hbar} (\phi)  \psi_\hbar (t) \rangle_{L^2}  -  \langle \varphi_\hbar , \mathrm{Op}^w_{\hbar} (\phi)  \varphi_\hbar  \rangle_{L^2}  = \int_0^t (i \hbar)^{-1}  \langle \psi_\hbar (s) ,  [ \mathrm{Op}^w_{\hbar} (H),   \mathrm{Op}^w_{\hbar} (\phi) ]  \psi_\hbar (s) \rangle_{L^2} ds.
\end{equation}
where $\psi_\hbar (t=0) =:  \varphi_\hbar \in L^{2} (\Bbb T^n;\Bbb C)$ with $\| \varphi_\hbar\|_{L^2} \le C$ $\forall 0 < \hbar \le 1$.
Morever, thanks to Theorem \ref{Th-comp}, the Weyl simbol of the commutator (namely the Moyal bracket of simbols $H$ and $\phi$) reads
\begin{equation}
\{ H, \phi \}_{{\rm M}}   =   i \hbar \{ H, \phi \} + r 
\end{equation}
where  $r$ has order $O(\hbar^2)$ when estimated in $S^{2+m} (\Bbb T^n \times \Bbb R^n)$ for any $m \in \Bbb R$, and thus also in $S^{0} (\Bbb T^n \times \Bbb R^n)$, 
\begin{equation}
|  \partial_x^\beta \partial_\eta^\alpha  r (x,\eta) |   \le  C_{\alpha \beta} \, \hbar^2 \langle \eta \rangle^{-  |\alpha| }.
\end{equation}
The related remainder operator $\mathrm{Op}^w_{\hbar} (r)$ is thus $L^2$-bounded, with (time independent) norm estimate thanks to Theorem \ref{Th-Bound0} with order $O(\hbar^2)$. This directly gives 
\begin{equation}
\lim_{\hbar \rightarrow 0^+} \hbar^{-1} \Big| \int_0^t \langle \psi_\hbar (s) ,  \mathrm{Op}^w_{\hbar} (r)  \psi_\hbar (s)  \rangle_{L^2}  ds \Big| \le \lim_{\hbar \rightarrow 0^+} t \,  \hbar^{-1} \|  \mathrm{Op}^w_{\hbar} (r) \|_{L^2 \rightarrow L^2} = 0,
\end{equation}
since   $\| \psi_\hbar (s)\|_{L^2} = \| \psi_\hbar (s=0)\|_{L^2} = \| \varphi_\hbar\|_{L^2} \le C$.
The first term in (\ref{eq-Hg22}) reads
\begin{equation}
\label{348}
 \langle \psi_\hbar (t) , \mathrm{Op}^w_{\hbar} (\phi)  \psi_\hbar (t) \rangle_{L^2} =   \sum_{\eta \in \frac{\hbar}{2} \mathbb{Z}^n} \int_{\mathbb{T}^n} \phi (x,\eta) W_\hbar \psi_\hbar (t,x,\eta)dx.
\end{equation}
Let  $w_t (x,\eta) $ be  a family of Radon measures of finite mass  on $\Bbb T^n \times \Bbb R^n$  for any $t \in [-T,T]$ which is a limit of $W_\hbar \psi_\hbar$ in 
$L^\infty ([-T,+T];A^\prime)$ along  a sequence of values of $\hbar\to 0$ . The related semiclassical limit of (\ref{348}) reads 
\begin{equation}
\int_{\mathbb{T}^n \times \Bbb R^n}  \phi(x,\eta) dw_t (x,\eta).
\end{equation}
If we now look at 
\begin{equation}
\sum_{\eta \in \frac{\hbar}{2} \mathbb{Z}^n} \int_{\mathbb{T}^n} \{ H, \phi \} (x,\eta) W_\hbar \psi_\hbar (t,x,\eta)dx
\end{equation}
we recall that $\phi$ is rapidly decreasing in $\eta$-variables and the phase space transform $\widehat{\phi}$ has compact support, hence also $\{ H, \phi \} \in A$. As a consequence, we can extract a subsequence of the above one so  that the  semiclassical limit of the righthand side of (\ref{eq-Hg22}) reads
\begin{equation}
\int_0^t \int_{\mathbb{T}^n \times \Bbb R^n}   \{ H, \phi \} (x,\eta)  dw_s (x,\eta) ds.  
\end{equation}
We therefore deduce that $\forall  t \in \Bbb R$
\begin{equation}
 \int_{\mathbb{T}^n \times \Bbb R^n}  \phi(x,\eta) dw_t (x,\eta) - \int_{\mathbb{T}^n \times \Bbb R^n}  \phi(x,\eta) dw_0 (x,\eta) =  \int_0^t \int_{\mathbb{T}^n \times \Bbb R^n}   \{ H, \phi \} (x,\eta)  dw_s (x,\eta) ds,
\end{equation}
and observe that the righthand side is differentiable for any $t \in \Bbb R$ (and thanks to the equivalence, the lefthand side too). 
We now take the time derivative of both sides and get equation (\ref{eq-Liou}). On the other hand, since $H$ is smooth,  it is easily seen  that equation (\ref{eq-Liou}) has a unique solution, and it is given by the push forward of the initial data  $w_t = (\varphi_H^t)_\star (w_0)$ involving the Hamiltonian flow. However, this is also the unique solution of the Liouville equation written in the following weak sense 
\begin{equation}
\label{eq-Li22}
\int_0^t \int_{\mathbb{T}^n \times \Bbb R^n} [ \partial_t f(t,x,\eta) +   \{ f, H \} (t,x,\eta) ] \ dw_s (x,\eta) ds  = 0 \quad \forall f \in C^\infty_0 ([0,t] \times \Bbb T^n \times \Bbb R^n), 
\end{equation}
see for example \cite{A}. 
\end{proof}

\subsection{WKB wave functions of positive and negative type}
\label{SEC-wkb}

We begin this section introducing a class of WKB-type wave functions in $H^{1}(\Bbb T^n;\Bbb C)$ associated with weak KAM solutions of the stationary Hamilton-Jacobi equation.

\begin{definition}
\label{wave-d}
Let $P \in \ell \, \Bbb Z^n$ for some $\ell >0$ and   $\hbar^{-1} \in  \ell^{-1} \Bbb N$.  
Let $v_{\pm} (P,\cdot) \in C^{0,1} (\Bbb T^n;\Bbb R)$ be  weak KAM solutions of the H-J equation (\ref{def-eff0}) (in the sense of \cite{F}, see  subsection \ref{sub-wHJ}). Select $a_{\hbar,P}^\pm   \in H^{1}(\Bbb T^n; \Bbb R^+)$ such that
\begin{equation}
\label{def-inc}
{\rm dom}(a_{\hbar,P}^\pm ) \subseteq  {\rm dom}(\nabla_x v_\pm (P ,\cdot))
\end{equation}
$\|  a_{\hbar,P}^\pm   \|_{L^2} = 1$ and $\hbar \, \|  a_{\hbar,P}^\pm   \|_{H^{1}} \longrightarrow 0$ as $\hbar \longrightarrow 0^+$. We suppose that the following weak limit upon passing through a subsequence $\hbar_j \longrightarrow 0^+$
\begin{equation}
\label{def-dm}
\exists \ dm_P^\pm (x) :=  \lim_{\hbar_j \longrightarrow 0^+} \ |a_{\hbar_j,P}^\pm (x)|^2 dx 
\end{equation} 
fulfills $dm_P \ll \pi_\star(dw_P) =: d\sigma_P$ where $dw_P$ is a Mather $P$-minimal measure as in (\ref{def-dwP}). The WKB wave functions of negative type are defined by
\begin{equation}
\label{wf-d}
\varphi_{\hbar}^- (x) := a_{\hbar,P}^- (x) \ e^{\frac{i}{\hbar} [P \cdot x + v_- (P,x)]  }.
\end{equation}  
The WKB wave functions of positive type are given by
\begin{equation}
\label{wf-dp}
\varphi_{\hbar}^+ (x) := a_{\hbar,P}^+ (x) \ e^{\frac{i}{\hbar} [P \cdot x + v_+ (P,x)]  }.
\end{equation}   
\end{definition}

\begin{remark}[{\bf Example}]
\label{EX-a}
About the previous definition, we exhibit an explicit construction  for $a_{\hbar,P}^\pm$.    
In fact, consider  $\rho \in C_0^\infty(\mathbb{R}^n)$ such that $0\leq \rho$, $\mathrm{supp}\,\rho \subset Q_n := [0,2\pi]^n$ 
and $\int\rho (x) dx=1$. For a fixed $\alpha>0$ let
\begin{equation}
\Phi_{\alpha,\hbar }(x):= \hbar^{-n\alpha}\sum_{k\in\mathbb{Z}^n}\rho \Bigl(\frac{x-2\pi k}{\hbar^\alpha}\Bigr).
\label{eqMillifier}\end{equation}
Then $\displaystyle{\int_{\mathbb{T}^n}}\Phi_{\alpha,\hbar}(x)dx=1,$ and if $f\in L^1(\mathbb{T}^n)$ we have, by the periodicity,
$$\Phi_{\alpha,\hbar}\star f(x)=\int_{\mathbb{T}^n}\Phi_{\alpha,\hbar}(x-y)f(y)dy=\int_{Q_n} \rho(z)f(x-\hbar^\alpha z)dz$$
Fix a fixed ($P$-dependent)  Borel positive measure $d m_P^\pm$ on $\Bbb T^n$ with ${\rm supp} (d m_P^\pm) \subseteq  {\rm dom}(\nabla_x v_\pm (P ,\cdot))$,  an amplitude function can be given by
\begin{equation}
\label{ampl}
a_{\hbar,P}^\pm(x)  :=  \Big\{  \int_{\mathbb{T}^n} \frac{1}{c_0} \Big( \hbar^\epsilon +  \Phi_{\gamma,\hbar}(x-y)\Big) d m_P (y)  \Big\}^{1/2} \Big|_{ {\rm dom}( \nabla v_\pm)}, 
\end{equation}
where $ \epsilon,  \gamma > 0$ with  $0 < \epsilon +  \gamma(n+1) < 1$,  $c_0=c_0(\hbar)=|\!| \hbar^\epsilon+\rho |\!|_{L^1(Q_n)}=
1+O(\hbar^\epsilon)$.  Notice that $a>\hbar^{\epsilon/2} c_0^{-1/2}$ then since $x \mapsto a_{\hbar,P}^\pm(x)$ is $2\pi$-periodic (in each variable), it is a well-defined function on the torus. 
The function (\ref{ampl}) is in $C^k (\Bbb T^n ; \Bbb R^+)$, $\forall$ $k \in \Bbb N$,  and fulfills (see Prop 4.5 in \cite{P-Z})
\begin{itemize}
\item[{\bf (i)}]
$
\displaystyle{\int_{\mathbb{T}^n}    |a_{\hbar,P}^\pm(x)|^2   dx  =  1}; 
$
\item[{\bf (ii)}]
$
\displaystyle{
\hbar^2  \int_{\mathbb{T}^n}   | \nabla_x a_{\hbar,P}^\pm (x)|^2   dx  \le     |\!|\nabla_x \rho  |\!|^2_{L^\infty} \, \hbar^{2 (1 - \epsilon - (n+1)\gamma)  } };
$
\item[{\bf (iii)}]
$
\displaystyle{
\lim_{\hbar \rightarrow 0+} \int_{\mathbb{T}^n}  f(x)  |a_{\hbar,P}^\pm (x)|^2 dx = \int_{\mathbb{T}^n} f(x) dm_P^\pm (x), \quad \forall f  \in  
C^{0}(\mathbb{T}^n;\mathbb{R}),
}
$
\item[{\bf (iv)}] 
$\displaystyle{\lim_{\hbar \rightarrow 0+} \int_{\mathbb{T}^n}  f(x)  |a_{\hbar,P}^\pm (x)|^2 dx = \int_{\mathbb{T}^n} f(x) dm_P^\pm (x),}$ 
$\forall$ bounded Borel measurable $f \colon\mathbb{T}^n\longrightarrow\mathbb{R}$ whose discontinuity set has zero $dm_P$-measure. 
\end{itemize}
\end{remark}

In the following, we provide two useful Lemma involving our class of WKB functions. 

\begin{lemma}
Let $\varphi_{\hbar}^\pm$ be as in Definition \ref{wave-d}. Then,  $\varphi_{\hbar}^\pm \in H^1(\Bbb T^n;\Bbb C)$. 
\end{lemma}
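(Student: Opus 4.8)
The plan is to verify directly that $\varphi_\hbar^\pm \in L^2(\Bbb T^n;\Bbb C)$ together with its distributional gradient, by exploiting the product structure $\varphi_\hbar^\pm = a_{\hbar,P}^\pm\, e^{iS_\pm/\hbar}$ with $S_\pm(x) = P\cdot x + v_\pm(P,x)$. First I would note that the exponential factor has modulus one, so $|\varphi_\hbar^\pm(x)| = |a_{\hbar,P}^\pm(x)|$ and hence $\|\varphi_\hbar^\pm\|_{L^2} = \|a_{\hbar,P}^\pm\|_{L^2} = 1 < \infty$; in particular $\varphi_\hbar^\pm \in L^2(\Bbb T^n;\Bbb C)$. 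It remains to check that $\nabla_x \varphi_\hbar^\pm \in L^2(\Bbb T^n;\Bbb C^n)$.

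For the gradient, the key point is that although $v_\pm$ is only Lipschitz, it is differentiable almost everywhere with $\nabla_x v_\pm \in L^\infty(\Bbb T^n;\Bbb R^n)$, and moreover the inclusion (\ref{inc-G}) forces the graph of $P+\nabla_x v_\pm$ to lie on the energy level $\{H = \bar H(P)\}$; since $H(x,\eta) = \tfrac12|\eta|^2 + V(x)$ and $V$ is smooth on the compact torus, this gives the pointwise bound $|P + \nabla_x v_\pm(P,x)|^2 = 2(\bar H(P) - V(x)) \le 2(\bar H(P) + \|V\|_\infty)$ on ${\rm dom}(\nabla_x v_\pm)$, i.e. $\nabla_x v_\pm \in L^\infty$ with an explicit bound. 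Then, on the full-measure set ${\rm dom}(a_{\hbar,P}^\pm) \subseteq {\rm dom}(\nabla_x v_\pm)$ the chain rule for Sobolev functions composed with Lipschitz maps (applied to the real and imaginary parts) gives the a.e. identity
\begin{equation}
\label{grad-wkb}
\nabla_x \varphi_\hbar^\pm(x) = \Big( \nabla_x a_{\hbar,P}^\pm(x) + \frac{i}{\hbar}\, a_{\hbar,P}^\pm(x)\, (P + \nabla_x v_\pm(P,x)) \Big) e^{\frac{i}{\hbar}[P\cdot x + v_\pm(P,x)]}.
\end{equation}

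Taking modulus and using $\|\nabla_x v_\pm\|_\infty < \infty$ from the previous paragraph, we get
\begin{equation}
\label{grad-est-wkb}
\|\nabla_x \varphi_\hbar^\pm\|_{L^2} \le \|\nabla_x a_{\hbar,P}^\pm\|_{L^2} + \frac{1}{\hbar}\big(|P| + \|\nabla_x v_\pm\|_{L^\infty}\big) \|a_{\hbar,P}^\pm\|_{L^2},
\end{equation}
and both terms on the right are finite: $a_{\hbar,P}^\pm \in H^1(\Bbb T^n;\Bbb R^+)$ by hypothesis in Definition \ref{wave-d}, so $\|\nabla_x a_{\hbar,P}^\pm\|_{L^2} < \infty$, and $\|a_{\hbar,P}^\pm\|_{L^2} = 1$. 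Hence $\varphi_\hbar^\pm \in H^1(\Bbb T^n;\Bbb C)$ for each fixed $\hbar$. (One also reads off from (\ref{grad-est-wkb}) that $\hbar\|\varphi_\hbar^\pm\|_{H^1}$ stays bounded as $\hbar \to 0^+$, using $\hbar\|a_{\hbar,P}^\pm\|_{H^1} \to 0$, which is the form in which this estimate will be used later.)

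The only genuinely delicate point is the justification of the chain rule (\ref{grad-wkb}): one must argue that the distributional derivative of the product $a_{\hbar,P}^\pm \cdot e^{iS_\pm/\hbar}$ is indeed the expression obtained formally, given that $S_\pm$ is merely Lipschitz rather than $C^1$. This is handled by the standard fact that a Lipschitz function on $\Bbb T^n$ belongs to $W^{1,\infty}$ with distributional gradient equal to its a.e. classical gradient, that $e^{it}$ is smooth, so $e^{iS_\pm/\hbar} \in W^{1,\infty}(\Bbb T^n;\Bbb C)$ by composition, and that the product of an $H^1$ function with a $W^{1,\infty}$ function lies in $H^1$ with the Leibniz rule valid (e.g. by mollifying $a_{\hbar,P}^\pm$, passing to the limit, and using dominated convergence together with the uniform $L^\infty$ bounds on $\nabla_x v_\pm$ and on $a_{\hbar,P}^\pm$ itself which is bounded since it is continuous on the compact torus in the example, or in general by the $H^1 \hookrightarrow$ local arguments). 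Everything else is a one-line modulus estimate.
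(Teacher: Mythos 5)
Your proof is correct and follows essentially the same route as the paper: compute $\|\varphi_\hbar^\pm\|_{L^2}=\|a_{\hbar,P}^\pm\|_{L^2}$, expand the gradient by the Leibniz rule, and bound $\|P+\nabla_x v_\pm\|_{L^\infty}$ via the energy-shell inclusion (\ref{inc-G}) together with the $H^1$ hypothesis on $a_{\hbar,P}^\pm$. The only difference is that you explicitly justify the Leibniz rule for the product of an $H^1$ function with the Lipschitz (hence $W^{1,\infty}$) phase factor, a point the paper passes over silently.
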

\begin{proof}
The $L^2$-norm simply reads
$
\| \varphi_{\hbar}^\pm  \|_{L^2} = \|   a_{\hbar,P}^\pm  \|_{L^2} < + \infty,
$
whereas
$$
\| \nabla_x \varphi_{\hbar}^\pm  \|_{L^2} \le \frac{1}{\hbar} \| (P + \nabla_x v_\pm ) a_{\hbar,P}^\pm    \|_{L^2} +  \| \nabla_x a_{\hbar,P}^\pm \|_{L^2}
$$
Recalling (\ref{inc-G}) and the setting of $a_{\hbar,P}$, it follows 
$$
\| \nabla_x \varphi_{\hbar}^\pm  \|_{L^2} \le \frac{1}{\hbar} \| P + \nabla_x v_\pm    \|_{L^\infty} +  \| a_{\hbar,P}^\pm \|_{H^1} < + \infty \quad \forall \ 0 < \hbar \le 1.
$$
\end{proof}

\begin{lemma}
\label{L-T1}
Let $\varphi_{\hbar}^\pm$ be as in Definition \ref{wave-d}. Let $\Bbb P_\hbar^{\pm}$ be as in (\ref{PO}) associated to  $\varphi_{\hbar}^\pm$. Then, the family of measures $\{ \Bbb P_\hbar^{\pm} \}_{0 \le \hbar \le 1}$ is tight.
\end{lemma}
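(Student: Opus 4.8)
The plan is to reduce the tightness condition (\ref{tight-C}) to a bound on the second momentum moment of $\varphi_\hbar^\pm$ that is uniform in $\hbar$, and then to obtain that bound from the $H^1$-control built into Definition \ref{wave-d} together with the Hamilton--Jacobi equation for the phase. Writing the Fourier series $\varphi_\hbar^\pm(x) = \sum_{\alpha\in\Bbb Z^n}\widehat\varphi_{\hbar,\alpha}^\pm\,e^{i\langle x,\alpha\rangle}$ and setting $M_R := \Bbb T^n\times\{\Bbb R^n\setminus B_R\}$, I would start from the identity (\ref{P-U}) established in the proof of Proposition \ref{Prop-T}, which gives
\begin{equation}
\mathbb{P}_\hbar^\pm(M_R) = \sum_{\alpha\in\Bbb Z^n,\ \hbar|\alpha|\ge R} |\widehat\varphi_{\hbar,\alpha}^\pm|^2 .
\end{equation}
Since $\hbar|\alpha|\ge R$ implies $1\le \hbar^2|\alpha|^2 R^{-2}$, this is bounded by $R^{-2}\,\hbar^2\sum_{\alpha\in\Bbb Z^n}|\alpha|^2|\widehat\varphi_{\hbar,\alpha}^\pm|^2$, i.e.\ (by Parseval's identity, in the normalization of property (ii)) by a constant multiple of $R^{-2}\,\hbar^2\|\nabla_x\varphi_\hbar^\pm\|_{L^2}^2$. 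Hence it suffices to show that $\hbar^2\|\nabla_x\varphi_\hbar^\pm\|_{L^2}^2$ is bounded uniformly for $0<\hbar\le1$.

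For that bound I would repeat the computation from the preceding $H^1$-membership lemma: since $\mathrm{dom}(a_{\hbar,P}^\pm)\subseteq\mathrm{dom}(\nabla_x v_\pm(P,\cdot))$, one has, a.e.\ on $\mathrm{dom}(a_{\hbar,P}^\pm)$,
\begin{equation}
\hbar\nabla_x\varphi_\hbar^\pm = \bigl(\hbar\,\nabla_x a_{\hbar,P}^\pm + i\,(P+\nabla_x v_\pm(P,x))\,a_{\hbar,P}^\pm\bigr)\,e^{\frac{i}{\hbar}(P\cdot x+v_\pm(P,x))},
\end{equation}
so that, using $\|a_{\hbar,P}^\pm\|_{L^2}=1$,
\begin{equation}
\hbar\,\|\nabla_x\varphi_\hbar^\pm\|_{L^2} \le \hbar\,\|\nabla_x a_{\hbar,P}^\pm\|_{L^2} + \|P+\nabla_x v_\pm\|_{L^\infty}\,\|a_{\hbar,P}^\pm\|_{L^2} \le \hbar\,\|a_{\hbar,P}^\pm\|_{H^1} + \|P+\nabla_x v_\pm\|_{L^\infty}.
\end{equation}
The first term tends to $0$ by the assumption $\hbar\|a_{\hbar,P}^\pm\|_{H^1}\to0$, hence is bounded for all $0<\hbar\le1$; the second is finite because $v_\pm\in C^{0,1}(\Bbb T^n;\Bbb R)$ --- equivalently, by the inclusion (\ref{inc-G}) one has $|P+\nabla_x v_\pm(P,x)|^2 = 2(\bar H(P)-V(x))$ wherever $\nabla_x v_\pm$ exists, and this is bounded on $\Bbb T^n$ since $V\in C^\infty(\Bbb T^n)$. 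This yields a constant $C_1>0$ with $\hbar^2\|\nabla_x\varphi_\hbar^\pm\|_{L^2}^2\le C_1$ for every $0<\hbar\le1$.

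Combining the two steps gives $\sup_{0<\hbar\le1}\mathbb{P}_\hbar^\pm(M_R)\le C_1'\,R^{-2}\to0$ as $R\to+\infty$, which is exactly (\ref{tight-C}), and the argument applies verbatim to both signs. The only delicate point --- and the one I regard as the main (though mild) obstacle --- is justifying the displayed formula for $\hbar\nabla_x\varphi_\hbar^\pm$ in the present low-regularity setting where $v_\pm$ is merely Lipschitz: one invokes that $\nabla_x v_\pm$ exists a.e.\ and lies in $L^\infty$, that $a_{\hbar,P}^\pm\in H^1$ with $\mathrm{dom}(a_{\hbar,P}^\pm)\subseteq\mathrm{dom}(\nabla_x v_\pm)$, and the product/chain rule for Sobolev functions, exactly as in the preceding lemma, so no genuinely new difficulty arises. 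A purely bookkeeping point is to keep the Fourier normalization consistent between (\ref{P-U}) and Parseval's identity, which only affects the value of the dimensional constant.
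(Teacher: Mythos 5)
Your proof is correct, but it takes a genuinely different route from the paper's. The paper also starts from the identity (\ref{P-U}), but then estimates each Fourier coefficient $\widehat{\varphi}^\pm_{\hbar,\alpha}$ individually: it writes $e^{\frac{i}{\hbar}(P-\hbar\alpha)\cdot x}$ as a gradient, integrates by parts once, and uses $\|\nabla_x v_\pm\|_{L^\infty}<\infty$ together with the boundedness of $\hbar\|\nabla_x a^\pm_{\hbar,P}\|_{L^2}$ to obtain $|\widehat{\varphi}^\pm_{\hbar,\alpha}|\le C_{n,P}^{1/2}\,|\hbar\alpha-P|^{-1}$, and finally sums $|\hbar\alpha-P|^{-2}$ over $|\hbar\alpha|>R$, comparing with $\int_{\Bbb R^n\setminus B_R}|y-P|^{-2}dy$. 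You instead run a Chebyshev-type argument: the tail mass in $|\eta|\ge R$ is at most $R^{-2}$ times the second momentum moment $\hbar^2\|\nabla_x\varphi^\pm_\hbar\|_{L^2}^2$, which is uniformly bounded by exactly the same two analytic ingredients. What your route buys is a clean quantitative tail bound $O(R^{-2})$ uniform in $\hbar$, and it sidesteps the delicate final summation in the paper: a single integration by parts yields only one power of decay, and the resulting lattice sum $\sum_{|\hbar\alpha|>R}|\hbar\alpha-P|^{-2}$ (like the comparison integral $\int_{\Bbb R^n\setminus B_R}|y-P|^{-2}\,dy$) is in fact divergent for $n\ge 2$, so your global second-moment argument is the more robust of the two. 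The only caveats are ones you already flag --- the a.e.\ product rule for $\nabla_x\varphi^\pm_\hbar$ (identical to the paper's preceding $H^1$ lemma) and the Fourier normalization constant --- plus the minor point that passing from $\hbar\|a^\pm_{\hbar,P}\|_{H^1}\to 0$ to a bound uniform over $0<\hbar\le 1$ uses the same implicit uniformity the paper itself invokes when it writes $\sup_{0<\hbar\le1}\|\hbar\nabla_x a^\pm_{\hbar,P}\|_{L^2}$.
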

\begin{proof}
Let  $M_R := \Bbb T^n \times \{ \Bbb R^n \backslash B_R  \}$ and $U_R := \Bbb R^n \backslash B_R $. Thanks to (\ref{P-U}) 
\begin{eqnarray}
\label{s-UR}
\mathbb{P}^\pm_\hbar  (\Bbb T^n \times U_R)  =  \sum_{ \alpha \in \mathbb{Z}^n}  \mathcal{X}_{U_R}  (\hbar \alpha)  |\widehat{\phi}_{\hbar,\alpha}^\pm|^2 
\end{eqnarray}
where the Fourier components read
\begin{eqnarray}
\widehat{\phi}_{\hbar,\alpha}^\pm &:=& (2\pi)^{-n} \int_{\Bbb T^n}  e^{- i \alpha \cdot x}  \varphi_{\hbar}^\pm (x) dx =  
(2\pi)^{-n} \int_{\Bbb T^n}  e^{- i \alpha \cdot x}  a_{\hbar,P}^\pm (x) \ e^{\frac{i}{\hbar} [P \cdot x + v_\pm (P,x)]  } dx
\\
&=& (2\pi)^{-n} \int_{\Bbb T^n}  a_{\hbar,P}^\pm (x) \ e^{\frac{i}{\hbar}  v_\pm (P,x)  }  e^{ \frac{i}{\hbar} (- \hbar \alpha + P) \cdot x}   dx
\end{eqnarray} 
and $P \in \ell \Bbb Z^n$ for some fixed $\ell >0$; moreover we underline thar the series (\ref{s-UR}) is computed over $|\hbar \alpha| > R$ (or equivalently $|\alpha| > R \hbar^{-1}$). In the case $R > |P|$, it holds the equality
\begin{eqnarray}
\widehat{\phi}_{\hbar,\alpha}^\pm &=&\frac{(-i\hbar) }{|- \hbar \alpha + P|^2} (- \hbar \alpha + P) \cdot (2\pi)^{-n} \int_{\Bbb T^n}    a_{\hbar,P}^\pm (x) \ e^{\frac{i}{\hbar}  v_\pm (P,x)  } \nabla_x  e^{ \frac{i}{\hbar} (- \hbar \alpha + P) \cdot x} dx.
\end{eqnarray} 
The integration by parts gives
\begin{eqnarray}
\widehat{\phi}_{\hbar,\alpha}^\pm &=&\frac{(i\hbar) }{|- \hbar \alpha + P|^2} (- \hbar \alpha + P) \cdot (2\pi)^{-n} \int_{\Bbb T^n}  \nabla_x   a_{\hbar,P}^\pm (x) \ e^{\frac{i}{\hbar}  v_\pm (P,x)  }  e^{ \frac{i}{\hbar} (- \hbar \alpha + P) \cdot x} dx
\\
&-&\frac{1 }{|- \hbar \alpha + P|^2} (- \hbar \alpha + P) \cdot (2\pi)^{-n} \int_{\Bbb T^n}   a_{\hbar,P}^\pm (x) ( \nabla_x v_\pm (P,x) ) e^{\frac{i}{\hbar}  v_\pm (P,x)  }  e^{ \frac{i}{\hbar} (- \hbar \alpha + P) \cdot x} dx
\nonumber
\end{eqnarray}
We are now in the position to provide an estimate for  $|\widehat{\phi}_{\hbar,\alpha}^\pm|$, indeed some easy computations together with the application of Cauchy-Schwarz inequality give
\begin{eqnarray}
|\widehat{\phi}_{\hbar,\alpha}^\pm | &\le&  \frac{ (2\pi)^{-n/2} }{|- \hbar \alpha + P|} \Big( \| \hbar \nabla_x   a_{\hbar,P}^\pm \|_{L^2} + \| \nabla_x v_\pm (P, \cdot \,) \|_{L^\infty}   \Big)
\end{eqnarray}
Recalling (\ref{inc-G}) we have  $ \| \nabla_x v_\pm (P,\cdot \,) \|_{L^\infty} < + \infty$ for any fixed $P \in \ell \Bbb Z^n$. We also remind that $\| \hbar \nabla_x   a_{\hbar,P}^\pm \|_{L^2}  \rightarrow 0 $ as $\hbar \rightarrow 0^+$.
To conclude, by defining 
\begin{equation}
C_{n,P} := (2\pi)^{-n} \Big( \sup_{0< \hbar \le 1} (\| \hbar \nabla_x   a_{\hbar,P}^\pm \|_{L^2}) + \| \nabla_x v_\pm (P,\cdot) \|_{L^\infty}   \Big)^2
\end{equation}
it follows (when $R > |P|$)
\begin{eqnarray}
\label{s-UR2}
|\mathbb{P}^\pm_\hbar  (\Bbb T^n \times U_R)|  \le  \sum_{\alpha \in \mathbb{Z}^n,  |\hbar \alpha| >R}   \frac{ C_{n,P} }{|- \hbar \alpha + P|^2} \le \int_{\Bbb R^n / B_R (0)}   \frac{ C_{n,P} }{|- y + P|^2} dy
\end{eqnarray}
The last ($\hbar$-independent) upper bound implies that 
\begin{equation}
\lim_{R \rightarrow + \infty} \,  \sup_{0 < \hbar \le 1} \, |\mathbb{P}^\pm_\hbar  (\Bbb T^n \times U_R)|   = 0.
\end{equation}
\end{proof}

We next exhibit  a property of the involved monokinetic measures.
\begin{proposition}
\label{MM}
Let  $dm_P^\pm$ as in  (\ref{def-dm}) and $v_- (P,\cdot) \in C^{0,1} (\Bbb T^n;\Bbb R)$ be a weak KAM solution of negative type for the H-J equation (\ref{def-eff0}).   Define the lifted Borel measure on $\Bbb T^n \times \Bbb R^n$ by 
\begin{equation}
\label{mono}
\int_{\Bbb T^n \times \Bbb R^n}  \phi (x,\eta) d\widetilde{m}_P^\pm (x,\eta)  :=   \int_{\Bbb T^n \times \Bbb R^n}  \phi (x,P+ \nabla_x v_- (P,x)) dm_P^\pm (x), \quad  \forall \ \phi \in A.
\end{equation}   
Then,   $d\widetilde{m}_P^\pm$ does not depend on the choice of $v_- (P,\cdot)$, namely 
\begin{equation}
\int_{\Bbb T^n \times \Bbb R^n}  \phi (x,\eta) d\widetilde{m}_P^\pm (x,\eta)  =   \int_{\Bbb T^n}  \phi (x,P+ \nabla_x v_-^{\prime} (P,x)) dm_P^\pm (x)
\end{equation} 
for any other weak KAM of negative type $v_-^{\prime} (P,x)$. Moreover, for any weak KAM  of positve type $v_{+} (P,x)$ it holds
\begin{equation}
\int_{\Bbb T^n \times \Bbb R^n}  \phi (x,\eta) d\widetilde{m}_P^\pm (x,\eta)  =   \int_{\Bbb T^n}  \phi (x,P+ \nabla_x v_+ (P,x)) dm_P^\pm (x)
\end{equation} 
Finally, there exists a Borel measurable function $g^\pm (P,\cdot)    : \Bbb T^n \rightarrow \Bbb R^+$ such that
\begin{equation}
\label{abs}
\int_{\Bbb T^n \times \Bbb R^n}  \phi (x) dm_P^\pm (x)  =   \int_{\Bbb T^n}  \phi (x)  \  g^\pm  (P,x)    d\sigma_P^\pm (x).
\end{equation} 
\end{proposition}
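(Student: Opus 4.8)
The plan is to reduce the whole statement to one fact from weak KAM theory --- that all Lipschitz weak KAM solutions of (\ref{def-eff0}), of negative and of positive type alike, share the same differential at every point of the projected Mather set $\mathcal{M}_P$ --- and then to perform some routine measure theory. First I would localize $dm_P^\pm$: by (\ref{def-dwP}) one has $d\sigma_P=\pi_\star(d\mu_P)$, and since a Mather measure $d\mu_P$ is supported in the compact set $\widetilde{\mathcal{M}}_P$ (see (\ref{def-M})), the measure $d\sigma_P$ is concentrated on $\mathcal{M}_P=\pi(\widetilde{\mathcal{M}}_P)$, i.e.\ $\sigma_P(\mathbb{T}^n\setminus\mathcal{M}_P)=0$. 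The absolute continuity hypothesis $dm_P^\pm\ll d\sigma_P$ from Definition \ref{wave-d} then forces $m_P^\pm(\mathbb{T}^n\setminus\mathcal{M}_P)=0$, so every integral appearing in the statement may be computed over $\mathcal{M}_P$ alone.

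The substantive step is the following claim: for each $x\in\mathcal{M}_P$, every Lipschitz weak KAM solution $v$ of (\ref{def-eff0}) --- of negative or of positive type --- is differentiable at $x$, and $P+\nabla_x v(P,x)$ equals a quantity $p(x)$ that does not depend on $v$. To see this, recall from (\ref{inc-MA}) that $\mathcal{M}_P\subseteq\pi(\mathcal{A}_P^\star)$, while $\pi$ restricted to $\mathcal{A}_P^\star$ is injective (Section \ref{sec-Au}); hence there is a unique $p(x)$ with $(x,p(x))\in\mathcal{A}_P^\star$. By the description (\ref{def-aubry}) of $\mathcal{A}_P^\star$, valid with the intersection taken over weak KAM solutions of either sign, the point $(x,p(x))$ lies in $\{(y,P+\nabla_y v(P,y)):y\in\mathrm{dom}(\nabla_x v)\}$ for every such $v$; comparing first coordinates yields $x\in\mathrm{dom}(\nabla_x v)$ and $P+\nabla_x v(P,x)=p(x)$. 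In particular $\nabla_x v_-(P,x)=\nabla_x v_-^{\prime}(P,x)=\nabla_x v_+(P,x)$ for all $x\in\mathcal{M}_P$.

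To conclude, for $\phi\in A\subseteq C_b(\mathbb{T}^n\times\mathbb{R}^n)$ the maps $x\mapsto\phi(x,P+\nabla_x v(P,x))$ are bounded and Borel (indeed continuous on $\mathrm{dom}(\nabla_x v)\supseteq\mathcal{M}_P$, by Section \ref{sub-wHJ}), and by the claim they agree $m_P^\pm$-almost everywhere when $v$ ranges over $\{v_-,v_-^{\prime},v_+\}$; integrating against $dm_P^\pm$ and invoking the localization of the first paragraph yields the asserted independence of the lift from the chosen negative-type solution and its coincidence with the positive-type lift. For the last identity (\ref{abs}), the Radon--Nikodym theorem applied to $dm_P^\pm\ll d\sigma_P$ produces a density $g^\pm(P,\cdot):=dm_P^\pm/d\sigma_P\in L^1(d\sigma_P)$, which is nonnegative since both measures are nonnegative, and then $\int_{\mathbb{T}^n}\phi(x)\,dm_P^\pm(x)=\int_{\mathbb{T}^n}\phi(x)\,g^\pm(P,x)\,d\sigma_P(x)$ for every bounded Borel $\phi$ (with $d\sigma_P^\pm$ understood as $d\sigma_P=\pi_\star(dw_P)$). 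The main obstacle --- really the only point with content --- is the claim of the second paragraph: everything rests on invoking the appropriate form of the weak KAM theorem, namely that the Aubry set is a Lipschitz graph common to all weak KAM solutions and that each such solution is differentiable on its projection with the prescribed momentum; once (\ref{def-aubry}), (\ref{inc-MA}) and the injectivity of $\pi|_{\mathcal{A}_P^\star}$ are taken as known, what remains is bookkeeping about $d\sigma_P$-null sets and about the composition $\phi(x,P+\nabla_x v(P,x))$ being defined only on $\mathrm{dom}(\nabla_x v)$.
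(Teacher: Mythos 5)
Your proposal is correct and follows essentially the same route as the paper: localize ${\rm supp}(dm_P^\pm)$ inside $\pi(\mathcal{A}_P^\star)$ via $dm_P^\pm\ll d\sigma_P$ and the inclusions ${\rm supp}(d\sigma_P)\subseteq\mathcal{M}_P\subseteq\pi(\mathcal{A}_P^\star)$, use the characterization (\ref{def-aubry}) of the Aubry set (together with injectivity of $\pi|_{\mathcal{A}_P^\star}$) to see that all weak KAM solutions of either sign are differentiable there with the same gradient, and finish with Radon--Nikodym. Your write-up is in fact somewhat more explicit than the paper's on the key step that every weak KAM solution is differentiable at each point of $\mathcal{M}_P$ with common momentum $p(x)$, which the paper compresses into an appeal to "the localization of the Aubry set".
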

\begin{proof}
For any $v_\pm (P,\cdot) \in C^{0,1} (\Bbb T^n;\Bbb R)$ which is a weak KAM solution of Hamilton-Jacobi equation (\ref{def-eff0}), the map $x \mapsto \nabla_x v_\pm (P,x)$ is continuous  and uniformely bounded on its domain of definition ${\rm dom}(\nabla_x v_\pm (P,\cdot)) \subseteq \Bbb T^n$. Moreover, since we assumed $dm_P^\pm  \ll  d\sigma_P$ then ${\rm supp }(dm_P^\pm ) \subseteq {\rm supp }( d\sigma_P)$. 
By recalling that  ${\rm supp }( d\sigma_P)  \subseteq    \pi(\mathcal{M}_P^\star)  \subseteq    \pi(\mathcal{A}_P^\star)$ and thanks to the localization the Aubry set $\mathcal{A}_P^\star$ shown in Section \ref{sec-Au}, it follows
 \begin{equation}
 \int_{\Bbb T^n \times \Bbb R^n}  \phi (x,\eta) d\widetilde{m}_P^\pm (x,\eta) =   \int_{\Bbb T^n}  \phi (x,P+ \nabla_x v_{\pm} (P,x)) dm_P^\pm (x)
 \end{equation}
for  any $v_{\pm} (P,\cdot) \in C^{0,1} (\Bbb T^n;\Bbb R)$   weak KAM solutions of Hamilton-Jacobi equation. Finally, the assumption on the absolute continuity of $dm_P^\pm$ with respect to $ d\sigma_P$ together with the well known Radon-Nikodym derivative  provides the existence of  $g^\pm  (P,x)$ satisfying  (\ref{abs}).
\end{proof}

\begin{lemma}
\label{Lem-ac}
Let   
\begin{equation}
\label{mP}
d\widetilde{m}_P^\pm (x,\eta) := \delta (\eta-P-\nabla_x v_\pm (P,x)) dm_P^\pm (x) 
\end{equation} 
be as in Proposition \ref{MM}. Then, $d\widetilde{m}_P^\pm$ is absolutely continuous to $dw_P$ the Legendre transform of a Mather P-minimal measure. In particular,  there exists a Borel measurable function $g^\pm (P,\cdot)    : \Bbb T^n \rightarrow \Bbb R^+$ such that 
\begin{equation}
\label{mP2}
d\widetilde{m}_P^\pm (x,\eta) = g^\pm (P,x) dw_P (x,\eta)
\end{equation} 
where $dw_P (x,\eta) =  \delta (\eta-P-\nabla_x v_\pm (P,x)) d\sigma_P (x) $.
\end{lemma}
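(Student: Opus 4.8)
The statement is essentially a reformulation of the last conclusion of Proposition \ref{MM}, lifted to the phase space. The key observation is that the measure $dw_P := \mathcal{L}_\star(d\mu_P)$ is itself a monokinetic measure supported on the graph of $P+\nabla_x v_\pm(P,\cdot)$; this follows because the Mather set $\widetilde{\mathcal M}_P$ is a Lipschitz graph, its Legendre transform $\mathcal M_P^\star$ is contained in the Aubry set $\mathcal A_P^\star$ (inclusion \eqref{inc-MA}), and $\mathcal A_P^\star$ lies on the graph of $P+\nabla_x v_\pm$ for \emph{every} weak KAM solution $v_\pm$ (formula \eqref{def-aubry}). Hence for $dw_P$-a.e.\ point $(x,\eta)$ we have $\eta = P+\nabla_x v_\pm(P,x)$ wherever $\nabla_x v_\pm(P,x)$ exists, and the Rademacher differentiability of the Lipschitz function $v_\pm$ together with ${\rm supp}(d\sigma_P)\subseteq\pi(\mathcal A_P^\star)$ guarantees that $\nabla_x v_\pm$ is defined $d\sigma_P$-a.e.\ on the relevant set. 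This is what justifies writing $dw_P(x,\eta)=\delta(\eta-P-\nabla_x v_\pm(P,x))\,d\sigma_P(x)$ in the statement.

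First I would make precise the identity $dw_P(x,\eta) = \delta(\eta - P - \nabla_x v_\pm(P,x))\,d\sigma_P(x)$: for any $\phi\in A$ (or more generally $\phi\in C_b(\Bbb T^n\times\Bbb R^n)$) write $\int \phi\,dw_P = \int_{T\Bbb T^n}\phi(\mathcal L(x,\xi))\,d\mu_P(x,\xi)$, use that $\mu_P$ is supported on $\widetilde{\mathcal M}_P$ which maps under $\mathcal L$ into $\mathcal A_P^\star\subseteq{\rm Graph}(P+\nabla_x v_\pm)$, and conclude $\int\phi\,dw_P = \int_{\Bbb T^n}\phi(x,P+\nabla_x v_\pm(P,x))\,d\sigma_P(x)$ since $\sigma_P = \pi_\star\mu_P = \pi_\star w_P$. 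Next, the hypothesis $dm_P^\pm\ll d\sigma_P$ from Definition \ref{wave-d} and the Radon--Nikodym theorem produce a Borel function $g^\pm(P,\cdot):\Bbb T^n\to\Bbb R^+$ with $dm_P^\pm = g^\pm(P,x)\,d\sigma_P$ --- this is exactly \eqref{abs}. Then for $\phi\in A$,
\begin{equation}
\int_{\Bbb T^n\times\Bbb R^n}\phi(x,\eta)\,d\widetilde m_P^\pm(x,\eta) = \int_{\Bbb T^n}\phi(x,P+\nabla_x v_\pm(P,x))\,g^\pm(P,x)\,d\sigma_P(x) = \int_{\Bbb T^n\times\Bbb R^n}\phi(x,\eta)\,g^\pm(P,x)\,dw_P(x,\eta),
\end{equation}
which is precisely \eqref{mP2}, and since $g^\pm\ge 0$ this shows $d\widetilde m_P^\pm\ll dw_P$.

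The only subtlety --- and the step I would be most careful about --- is the well-definedness of the Dirac-comb notation on the \emph{graph of a Lipschitz function}: $\nabla_x v_\pm(P,x)$ exists only almost everywhere, so one must check that the set where it fails to exist is $d\sigma_P$-negligible. This is handled exactly as in the proof of Proposition \ref{MM}: ${\rm supp}(d\sigma_P)\subseteq\pi(\mathcal M_P^\star)\subseteq\pi(\mathcal A_P^\star)$, and on $\pi(\mathcal A_P^\star)$ the gradient $\nabla_x v_\pm$ is defined and the map $x\mapsto(x,P+\nabla_x v_\pm(P,x))$ is continuous by the results quoted from \cite{F}, \cite{So} in Section \ref{sec-Au}. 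Moreover the independence of $d\widetilde m_P^\pm$ from the choice of weak KAM solution (and of its type $\pm$), already established in Proposition \ref{MM}, shows the formula \eqref{mP} is unambiguous. Beyond this point the argument is a routine application of Radon--Nikodym, so I would keep the write-up short and simply cite Proposition \ref{MM} for the lifting and the change-of-variables identity.
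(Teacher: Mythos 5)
Your proposal is correct and follows essentially the same route as the paper's proof: both reduce the claim to the Radon--Nikodym derivative of $dm_P^\pm$ with respect to $d\sigma_P$ combined with the identity $dw_P(x,\eta)=\delta(\eta-P-\nabla_x v_\pm(P,x))\,d\sigma_P(x)$, which in turn rests on ${\rm supp}(dw_P)\subseteq\mathcal{A}_P^\star\subseteq{\rm Graph}(P+\nabla_x v_\pm(P,\cdot))$ (the paper cites Lemma 3.1 of \cite{F-G-S} for this inclusion, while you derive it from \eqref{inc-MA} and \eqref{def-aubry}, which amounts to the same thing). Your additional care about the $d\sigma_P$-negligibility of the non-differentiability set of $v_\pm$ is a welcome precision but does not change the argument.
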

\begin{proof}
By  the assumption within Definition \ref{wave-d}, it holds  $dm_P \ll \pi_\star(dw_P) =: d\sigma_P$ where $dw_P$ is Legendre transform of a Mather $P$-minimal measure  $d\mu_P$ as in (\ref{def-dwP}). Equivalently, we can also take $\pi_\star (d\mu_P) =: d\sigma_P$  since the push forward by the canonical projection is the same.
Thus, there exists a Borel measurable function $g^\pm (P,\cdot)    : \Bbb T^n \rightarrow \Bbb R^+$ such that
\begin{equation}
d\widetilde{m}_P^\pm (x,\eta) = g^\pm (P,x)  \delta (\eta-P-\nabla_x v_\pm (P,x)) d\sigma_P (x). 
\end{equation} 
In fact,  it holds the equality $\delta (\eta-P-\nabla_x v_\pm (P,x)) d\sigma_P (x) = dw_P (x,\eta)$ thanks to the inclusion 
$$
{\rm supp}(dw_P) \subseteq  \mathcal{A}_P^\star \subseteq {\rm Graph} (P+\nabla_x v_\pm (P,\cdot \,)),
$$
see Lemma 3.1 shown in \cite{F-G-S}. The (\ref{mP2}) follows directly.
\end{proof}

We are now ready to provide the result involving the semiclassical limits of the Wigner transform for the above class of WKB-type wave functions.
\begin{theorem}
\label{TH4}
Let $P \in \ell \, \Bbb Z^n$ for some $\ell >0$, $\hbar^{-1} \in \ell^{-1} \Bbb N$, $v_\pm$ be weak KAM solutions of H-J equation (\ref{def-eff0}) and $\varphi_{\hbar}^{\pm}$ be the associated WKB wave functions as in Def. \ref{wave-d}, $dm_P^\pm$ as in  Def. \ref{wave-d}.  Then,   
\begin{equation}
\label{WKB-c}
\lim_{\hbar \rightarrow 0^+} W_\hbar \varphi_{\hbar}^{\pm} (x,\eta) =  \delta (\eta-P-\nabla_x v_\pm (P,x)) dm_P^\pm (x) =:   d\widetilde{m}_P^\pm (x,\eta)
\end{equation} 
in $A^\prime$ and passing through a subsequence.
\end{theorem}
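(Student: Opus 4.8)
The plan is to test $W_\hbar \varphi_\hbar^\pm$ against an arbitrary $\phi \in A$, using the phase-space Fourier representation $\phi = F(\widehat\phi)$ with $\widehat\phi$ compactly supported, so that it suffices to evaluate the pairing on the elementary test functions $e^{i(\langle p,\eta\rangle + \langle q,x\rangle)}$ for $(q,p) \in \Bbb Z^n \times \Bbb R^n$. By the identity \eqref{med5}, this reduces the whole statement to computing $\lim_{\hbar\to0^+}\widehat{W}_\hbar\varphi_\hbar^\pm(q,p) = \lim_{\hbar\to0^+}\langle \varphi_\hbar^\pm, U_\hbar(q,p)\varphi_\hbar^\pm\rangle_{L^2}$ and checking it equals $\int_{\Bbb T^n} e^{i(\langle p, P+\nabla_x v_\pm(P,x)\rangle + \langle q,x\rangle)}\,dm_P^\pm(x)$, which is exactly the Fourier coefficient of the right-hand side measure $d\widetilde m_P^\pm$. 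Tightness from Lemma \ref{L-T1} (together with the a priori bound $\|\varphi_\hbar^\pm\|_{L^2}=1$ and the estimate \eqref{estW76}) guarantees that a limit in $A'$ exists along a subsequence and is a finite measure, so the only real content is the pointwise-in-$(q,p)$ computation of the limit.

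First I would write out, using the definition of $U_\hbar(q,p)$ and \eqref{wf-d}--\eqref{wf-dp},
\begin{equation}
\langle \varphi_\hbar^\pm, U_\hbar(q,p)\varphi_\hbar^\pm\rangle_{L^2} = \int_{\Bbb T^n} e^{-\frac{i}{\hbar}S(x)}\,\overline{a_{\hbar,P}^\pm(x)}\; e^{i(q\cdot x + \hbar p\cdot q/2)}\; a_{\hbar,P}^\pm(x+\hbar p)\; e^{\frac{i}{\hbar}S(x+\hbar p)}\,dx,
\end{equation}
where $S(x)=P\cdot x + v_\pm(P,x)$. The phase difference is $\tfrac{1}{\hbar}[S(x+\hbar p)-S(x)] = p\cdot\nabla S(x) + o(1)$ wherever $\nabla v_\pm$ exists, by Lipschitz differentiability of $v_\pm$ along the direction $p$ at a.e.\ point (and $dm_P^\pm$ is supported in ${\rm dom}(\nabla_x v_\pm)$ by \eqref{def-inc}--\eqref{def-dm}). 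So the integrand converges pointwise a.e.\ (w.r.t.\ $dm_P^\pm$) to $e^{i(q\cdot x + p\cdot\nabla S(x))}\,|a_{\hbar,P}^\pm(x)|^2$-weighted, and since $a_{\hbar,P}^\pm(x+\hbar p)\,\overline{a_{\hbar,P}^\pm(x)}$ is close to $|a_{\hbar,P}^\pm(x)|^2$ in $L^1$ (controlled by $\hbar\|\nabla a_{\hbar,P}^\pm\|_{L^2}\to 0$ via the fundamental theorem of calculus and Cauchy--Schwarz), one gets
\begin{equation}
\lim_{\hbar\to0^+}\langle \varphi_\hbar^\pm, U_\hbar(q,p)\varphi_\hbar^\pm\rangle_{L^2} = \lim_{\hbar\to0^+}\int_{\Bbb T^n} e^{i(q\cdot x + p\cdot(P+\nabla_x v_\pm(P,x)))}|a_{\hbar,P}^\pm(x)|^2\,dx = \int_{\Bbb T^n} e^{i(q\cdot x + p\cdot(P+\nabla_x v_\pm(P,x)))}\,dm_P^\pm(x),
\end{equation}
the last equality by \eqref{def-dm} (using that $x\mapsto \nabla_x v_\pm(P,x)$ is continuous on its domain and $dm_P^\pm$ charges no boundary, as in property (iv) of Remark \ref{EX-a}). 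This identifies the limit's Fourier coefficients with those of $d\widetilde m_P^\pm$, hence the limit is $d\widetilde m_P^\pm$, and since every subsequential limit in $A'$ is the same measure, the full limit exists.

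The main obstacle is the low regularity of $v_\pm$: the phase $\tfrac{1}{\hbar}[S(x+\hbar p)-S(x)]$ must be controlled only using Lipschitz continuity, so the pointwise limit $p\cdot\nabla_x v_\pm(P,x)$ holds merely $dm_P^\pm$-a.e., and one must argue carefully that this a.e.\ convergence, combined with the uniform bound $|a_{\hbar,P}^\pm(x+\hbar p)\overline{a_{\hbar,P}^\pm(x)}|$ being uniformly integrable (which follows from $\|a_{\hbar,P}^\pm\|_{L^2}=1$ plus the $H^1$ control), suffices to pass to the limit — this is where the dominated/Vitali convergence theorem enters and where the assumption ${\rm dom}(a_{\hbar,P}^\pm)\subseteq{\rm dom}(\nabla_x v_\pm)$ is essential. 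A secondary technical point is justifying the interchange of the $\phi = F(\widehat\phi)$ integral over the compact support of $\widehat\phi$ with the $\hbar\to0$ limit, which is routine given the uniform bound \eqref{sup-W} and dominated convergence on the compact $\operatorname{supp}\widehat\phi$.
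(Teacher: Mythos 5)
Your proposal is correct and follows essentially the same route as the paper: reduce to the $(q,p)$ phase-space Fourier representation, absorb the amplitude shift $a_{\hbar,P}^\pm(\cdot+\hbar p)\mapsto a_{\hbar,P}^\pm(\cdot)$ via the $\hbar\|a_{\hbar,P}^\pm\|_{H^1}\to 0$ bound, and pass to the limit in the phase using the pointwise convergence of $\hbar^{-1}[v_\pm(P,y+\hbar p)-v_\pm(P,y)]$ on ${\rm dom}(\nabla_x v_\pm)$ together with the support inclusion for $dm_P^\pm$ and the continuity of $\nabla_x v_\pm$ on its domain. The only difference is that the simultaneous limit of the integrand and of the measures $|a_{\hbar,P}^\pm|^2dx\rightharpoonup dm_P^\pm$, which you handle by a Vitali-type argument, is formalized in the paper via Lemma \ref{LemmaYang}; this is the same idea.
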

\begin{proof}
The Wigner transform in the variables $(q,p) \in \Bbb Z^n \times \Bbb R^n$:
\begin{eqnarray}
\widehat{W}_\hbar  \varphi_{\hbar}^\pm (q,p)  &:=& 
\int_{\mathbb{T}^n} \varphi_{\hbar}^\pm (y)^\star   e^{i (q \cdot y + \hbar p \cdot q / 2) }  \varphi_{\hbar}^\pm  (y+\hbar p)   dy
\nonumber\\
&=&    \int_{\mathbb{T}^n}  e^{i [ \hbar p \cdot q / 2 + P \cdot  p] } \  e^{i q \cdot y}   e^{ \frac{i}{\hbar} [v_\pm (P ,y+ \hbar p) - v_\pm (P ,y)   ]   }  a_{\hbar,P}^\pm (y)  a_{\hbar,P}^\pm (y+\hbar p)   dy.
\label{W-qp}
\end{eqnarray}
Since  $a_{\hbar,P}^\pm \in L^{2}(\Bbb T^n; \Bbb R^+)$,  the integral in (\ref{W-qp}) is absolutely convergent and the function $\widehat{W}_\hbar \varphi_\hbar^\pm (\cdot)$ is Lebesgue measurable and uniformely bounded in both variables.\\ 
By the $H^1$-regularity we can write 
$
a_{\hbar,P}^\pm (y+\hbar p)  =   a_{\hbar,P}^\pm (y) + \hbar \int_{0}^1 p \cdot  \nabla_x a_{\hbar,P}^\pm (y + \lambda \hbar p)  d\lambda  
$
and hence 
\begin{eqnarray}
\label{bound-H1}
\|  a_{\hbar,P}^\pm  (\diamond +\hbar p) -   a_{\hbar,P}^\pm (\diamond) \|_{L^2}  \le   |p| \ \hbar \ \|  a_{\hbar,P}^\pm \|_{H^1}
\end{eqnarray}
Thus,
\begin{eqnarray}
\widehat{W}_\hbar  \varphi_{\hbar}^\pm (q,p)  =    \int_{\mathbb{T}^n}  e^{i [ \hbar p \cdot q / 2 + P \cdot  p] } \  e^{i q \cdot y}   e^{ \frac{i}{\hbar} [v_\pm (P ,y+ \hbar p) - v_\pm (P ,y)   ]   }  a_{\hbar,P}^\pm (y)^2    dy + R_\hbar (q,p)
\end{eqnarray}
where
$$
R_\hbar (q,p) :=  \int_{\mathbb{T}^n}  e^{i [ \hbar p \cdot q / 2 + P \cdot  p] } \  e^{i q \cdot y}   e^{ \frac{i}{\hbar} [v_\pm (P ,y+ \hbar p) - v_\pm (P ,y)   ]   }  a_{\hbar,P}^\pm (y)  [a_{\hbar,P}^\pm (y+\hbar p)  -  a_{\hbar,P}^\pm (y)  ]  dy
$$
and thus $\forall (q,p) \in \Bbb Z^n \times \Bbb R^n$
\begin{eqnarray}
\label{est-R}
|R_\hbar (q,p)| \le   {\rm vol}(\Bbb T^n)  \|  a_{\hbar,P} \|_{L^2} \|  a_{\hbar,P} (\diamond +\hbar p) -   a_{\hbar,P} (\diamond) \|_{L^2}     \le  (2\pi)^n \  |p| \ \hbar \ \|  a_{\hbar,P} \|_{H^1}.
\end{eqnarray}
For any $\phi \in A$ and ${\rm supp}(\phi)$ is compact, 
\begin{eqnarray}
\label{mean-Op}
&& \sum_{q \in \mathbb{Z}^n}  \int_{\mathbb{R}^n}  \widehat{\phi} (q,p)  \widehat{W}_\hbar  \varphi_{\hbar}^\pm (q,p)  (q,p) dp 
\\
&=& 
\sum_{q \in \mathbb{Z}^n}  \int_{\mathbb{R}^n}  \widehat{\phi} (q,p)   \int_{\mathbb{T}^n}  e^{i [ \hbar p \cdot q / 2 + P \cdot  p] } \  e^{i q \cdot y}   e^{ \frac{i}{\hbar}  [v_\pm (P ,y+ \hbar p) - v_\pm (P ,y)   ]     }  a_{\hbar,P} (y)^2    dy dp  
\nonumber
\\
&+& \sum_{q \in \mathbb{Z}^n}  \int_{\mathbb{R}^n}  \widehat{\phi} (q,p)  R_\hbar (q,p) dp.
\end{eqnarray}
An easy computation shows that
$$
\Big| \sum_{q \in \mathbb{Z}^n}  \int_{\mathbb{R}^n}  \widehat{\phi} (q,p)  R_\hbar (q,p) dp \Big| \le  \sum_{q \in \mathbb{Z}^n}  \int_{\mathbb{R}^n}  |\widehat{\phi} (q,p)| (2\pi)^n \  |p| \ \hbar \ \|  a_{\hbar,P} \|_{H^1} dp
$$
and hence, since ${\rm supp} ( \widehat{\phi})$ is compact and $\hbar \|  a_{\hbar,P}^\pm   \|_{H^{1}} \longrightarrow 0$ as $\hbar \longrightarrow 0^+$ (see Remark \ref{EX-a}) it follows
\begin{equation}
\label{est-R2}
(2\pi)^n \sum_{q \in \mathbb{Z}^n}  \int_{\mathbb{R}^n}  |\widehat{\phi} (q,p)|  \  |p|  dp  \ \hbar \ \|  a_{\hbar,P}^\pm \|_{H^1} \longrightarrow 0^+ \quad {\rm as } \quad \hbar \longrightarrow 0^+.
\end{equation}
In view of (\ref{est-R2}) and the compactness of ${\rm supp} ( \widehat{\phi})$, the (\ref{mean-Op}) reads
\begin{eqnarray} 
\label{4124}  
\sum_{q \in \mathbb{Z}^n}  \int_{\mathbb{R}^n}  \widehat{\phi} (q,p)   \lim_{\hbar \rightarrow 0^+}  \int_{\mathbb{T}^n}  e^{i [ \hbar p \cdot q / 2 + P \cdot  p] } \  e^{i q \cdot y}   e^{ \frac{i}{\hbar}  [v_\pm (P ,y+ \hbar p) - v_\pm (P ,y)   ]     }  |a_{\hbar,P}^\pm (y)|^2    dy dp  .
\nonumber
\label{cat5}
\end{eqnarray}
By looking at the integral
\begin{equation}
\label{410}
\int_{\mathbb{T}^n}  e^{i [ \hbar p \cdot q / 2 + P \cdot  p] } \  e^{i q \cdot y}   e^{ \frac{i}{\hbar}  [v_\pm (P ,y+ \hbar p) - v_\pm (P ,y)   ]     }  
|a_{\hbar,P}^\pm (y)|^2    dy
\end{equation}
we observe that $e^{i ( \hbar p \cdot q / 2) }  e^{ \frac{i}{\hbar}  [v_\pm (P ,y+ \hbar p) - v_\pm (P ,y)   ]     } $ is a family of uniformely bounded continuous functions on $\Bbb T^n$ such that 
\begin{equation}
\label{412}
\lim_{\hbar \rightarrow 0^+}e^{i ( \hbar p \cdot q / 2) }   e^{ \frac{i}{\hbar}  [v_\pm (P ,y+ \hbar p) - v_\pm (P ,y)   ]     }  = e^{ i p \cdot \nabla_x v_\pm (P,y)}
\end{equation}
$\forall (q,p) \in {\rm supp} ( \widehat{\phi})$ and $\forall$ $y \in {\rm dom}(\nabla_x v_\pm (P,\cdot))$, since any map $x \longmapsto \nabla_x v_{\pm}(P,x)$  is continuous on ${\rm dom}(\nabla_x v_{\pm} (P,\cdot))$ (as we recall in Section  \ref{sub-wHJ}). By the inclusions 
\begin{equation}
{\rm supp}(dm_{P}^\pm) \subseteq {\rm supp}(d\sigma_{P}) \subseteq {\rm dom}(\nabla_x v_\pm (P,\cdot))
\end{equation}
we deduce that (\ref{412}) is not fulfilled only for a set of zero $dm_{P}^\pm$ measure.\\		
Hence, we can apply   Lemma \ref{LemmaYang} for the semiclassical limits of the  integral (\ref{410}) to obtain
\begin{equation}
 \int_{\mathbb{T}^n}  e^{i  P \cdot  p } \  e^{i q \cdot y}   e^{ i p \cdot \nabla_x v_\pm (P,y)     }  dm_P^\pm (y) dp.
\end{equation}
We deduce that the semiclassical limits of the mean value  (\ref{4124}) read
\begin{eqnarray}
&& \sum_{q \in \mathbb{Z}^n}  \int_{\mathbb{R}^n}  \widehat{\phi} (q,p) \Big(  \int_{\mathbb{T}^n}  e^{i  P \cdot  p } \  e^{i q \cdot y}   e^{ i p \cdot \nabla_x v_\pm (P,y)     }  dm_P^\pm (y) \Big) dp.
\\
&& \ \ =  \int_{\mathbb{T}^n} \sum_{q \in \mathbb{Z}^n}  \int_{\mathbb{R}^n}  \widehat{\phi} (q,p)    e^{i  P \cdot  p } \  e^{i q \cdot y}   e^{ i p \cdot \nabla_x v_\pm (P,y)     }  dp \ dm_P^\pm (y) 
\end{eqnarray}
where we used again the compacteness of ${\rm supp}( \widehat{\phi})$.
Through the inverse phase-space Fourier transform  the above expression becomes
\begin{eqnarray}
\int_{\mathbb{T}^n}  \phi (y, P +  \nabla_x v_\pm (P,y) ) \,  dm_P^\pm (y).
\end{eqnarray}
\end{proof}

\begin{remark}
Let $P \in \ell \, \Bbb Z^n$ for some $\ell >0$ and $\varphi_\hbar^\pm$ as in Definition \ref{wave-d}.   Define the current 
\begin{equation}
\label{def-J}
J_\hbar^\pm (x) :=  \hbar \, {\rm Im} (  (\varphi_\hbar^\pm)^\star  \nabla_x \varphi_\hbar^\pm (x) ) = (P + \nabla_x v_\pm (P,x))  |a_{\hbar,P}^\pm (x)|^2 
\end{equation}
The (formal) free current equation ${\rm div}_x J_\hbar^\pm (x) = 0$ becomes well-posed in the weak sense: 
\begin{eqnarray}
\int_{\Bbb T^n} \nabla_x f (x) \cdot  J_\hbar^\pm (x) \ dx =  0 \quad \quad \forall f \in C^\infty (\Bbb T^n;\Bbb R).
\end{eqnarray}
In particular, we recall the inclusion  (\ref{inc-G}) which implies, together with the assumptions on $a_{\hbar,P}^\pm$,  the estimate $\sup_{0 < \hbar \le1} \| J_\hbar^\pm \|_{L^1} \le   \| P + \nabla_x v_\pm (P,\cdot)   \|_{L^\infty}    < + \infty$.  However, the low regularity $v_\pm (P,\cdot) \in C^{0,1} (\Bbb T^n;\Bbb R^n)$ do not guarantees the existence of some amplitude function satifying this equation, hence we  write the asymptotic condition 
\begin{eqnarray}
\Big|  \int_{\Bbb T^n} \nabla_x f (x) \cdot  J_{\hbar_j}^\pm (x) \ dx \Big|   \longrightarrow  0, \quad \forall f \in C^\infty (\Bbb T^n;\Bbb R)  
\end{eqnarray}
for a sequence $\{ \hbar_j^{-1} \}_{j \in \Bbb N} \in \ell^{-1} \Bbb N$ with $\hbar_j \longrightarrow 0^+ $ as $j \longrightarrow + \infty$. 
\end{remark}

The above observations become meaningful in view of the following result.
\begin{proposition}
\label{prop46}
Let $P \in \ell \, \Bbb Z^n$ for some $\ell >0$, $v_\pm (P,\cdot) \in C^{0,1} (\Bbb T^n;\Bbb R)$  be a weak KAM solution  for (\ref{def-eff0}). Then, there exist 
 $a_{\hbar,P}^\pm $ as  in Remark \ref{EX-a} such that 
 the (unique) weak-$\star$ limit $dm_P (x) := \lim_{j \rightarrow + \infty} |a_{\hbar_j ,P}^\pm (x)|^2dx$ equal $d\sigma_P := \pi_\star (dw_P) $ where $dw_P$ is the Legendre transform of a Mather $P$-minimal measure and 
\begin{equation}
\label{divJ1}
\Big| \int_{\Bbb T^n} \nabla_x f (x) \cdot  J_{\hbar_j}^\pm (x)   dx \Big| \longrightarrow  0 \quad {\rm as} \ j \longrightarrow + \infty \quad  \forall f \in C^\infty (\Bbb T^n;\Bbb R).
\end{equation}
\end{proposition}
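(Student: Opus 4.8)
The plan is to exhibit a specific choice of the amplitude functions $a_{\hbar,P}^\pm$ from Remark \ref{EX-a} by selecting the base measure in the construction \eqref{ampl} to be $dm_P^\pm = d\sigma_P := \pi_\star(dw_P)$ itself, where $dw_P$ is the Legendre transform of a Mather $P$-minimal measure. First I would check that this is an admissible choice: by the results in Section \ref{sec-M} and Section \ref{sec-Au} we have ${\rm supp}(d\sigma_P) \subseteq \pi(\mathcal{M}_P^\star) \subseteq \pi(\mathcal{A}_P^\star) \subseteq {\rm dom}(\nabla_x v_\pm(P,\cdot))$, so the support condition ${\rm supp}(dm_P^\pm) \subseteq {\rm dom}(\nabla_x v_\pm(P,\cdot))$ in Definition \ref{wave-d} and Remark \ref{EX-a} is satisfied, and properties (i)--(iv) of Remark \ref{EX-a} hold; in particular the weak-$\star$ limit of $|a_{\hbar_j,P}^\pm(x)|^2 dx$ is exactly $d\sigma_P$, which (being trivially absolutely continuous with respect to itself) fulfills the hypothesis $dm_P \ll d\sigma_P$, with Radon--Nikodym density $g^\pm \equiv 1$.

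The heart of the argument is \eqref{divJ1}. Writing $J_{\hbar}^\pm(x) = (P+\nabla_x v_\pm(P,x))\,|a_{\hbar,P}^\pm(x)|^2$ as in \eqref{def-J}, for fixed $f \in C^\infty(\Bbb T^n;\Bbb R)$ I would split
\begin{equation}
\int_{\Bbb T^n} \nabla_x f(x)\cdot J_\hbar^\pm(x)\,dx = \int_{\Bbb T^n} \nabla_x f(x)\cdot (P+\nabla_x v_\pm(P,x))\,|a_{\hbar,P}^\pm(x)|^2\,dx.
\end{equation}
The integrand is the product of the bounded Borel function $x \mapsto \nabla_x f(x)\cdot(P+\nabla_x v_\pm(P,x))$ (bounded by \eqref{inc-G}, continuous on ${\rm dom}(\nabla_x v_\pm)$, hence with discontinuity set of zero $d\sigma_P$-measure since ${\rm supp}(d\sigma_P)\subseteq {\rm dom}(\nabla_x v_\pm)$) against $|a_{\hbar,P}^\pm|^2$. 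By property (iv) of Remark \ref{EX-a}, as $\hbar_j\to 0^+$ this converges to
\begin{equation}
\int_{\Bbb T^n} \nabla_x f(x)\cdot (P+\nabla_x v_\pm(P,x))\,d\sigma_P(x).
\end{equation}
Now I invoke the continuity equation \eqref{cont-0} satisfied by $d\sigma_P$: precisely because $d\sigma_P = \pi_\star(dw_P) = \pi_\star(d\mu_P)$ is the projection of a Mather (invariant, closed) measure, and $P+\nabla_x v_\pm(P,x) = \nabla_\xi L(x,\xi)$ is the velocity read off the Aubry/Mather graph (using ${\rm supp}(dw_P)\subseteq \mathcal{A}_P^\star \subseteq {\rm Graph}(P+\nabla_x v_\pm)$ as in Lemma \ref{Lem-ac}), the closedness of $d\mu_P$ yields $\int_{\Bbb T^n}\nabla_x f(x)\cdot(P+\nabla_x v_\pm(P,x))\,d\sigma_P(x) = 0$ for all $f\in C^\infty(\Bbb T^n)$. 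Hence the limit is zero, which is \eqref{divJ1}.

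The main obstacle, and the step needing the most care, is the justification that the limiting integral vanishes, i.e. verifying that the closedness of the Mather measure $d\mu_P$ translates into the stated stationary current identity for $d\sigma_P$ against the weak KAM velocity field $P+\nabla_x v_\pm$. One must be careful that $\xi = P+\nabla_x v_\pm(P,x)$ holds $d\mu_P$-almost everywhere — this is exactly the content of the inclusion ${\rm supp}(dw_P) = \mathcal{L}({\rm supp}\,d\mu_P) \subseteq \mathcal{A}_P^\star \subseteq {\rm Graph}(P+\nabla_x v_\pm(P,\cdot))$ cited from \cite{F-G-S} in Lemma \ref{Lem-ac} — so that $\int \nabla_x f\cdot\xi\, d\mu_P = \int \nabla_x f(x)\cdot(P+\nabla_x v_\pm(P,x))\,d\mu_P$, and the left side is $0$ by the definition of a closed measure recalled in Section \ref{sec-M}. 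A secondary subtlety is ensuring property (iv) genuinely applies, which requires knowing the discontinuity set of $\nabla_x v_\pm$ (inside ${\rm dom}(\nabla_x v_\pm)$ the map is continuous, and its complement is $d\sigma_P$-null), so that the Borel test function has $d\sigma_P$-null discontinuity set; this is already recorded in Section \ref{sub-wHJ} and in the proof of Proposition \ref{MM}. Once these two points are in place the proof closes immediately.

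\endproof
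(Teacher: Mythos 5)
Your proposal is correct and follows essentially the same route as the paper: choose the base measure in the construction of Remark \ref{EX-a} to be $d\sigma_P$ itself, derive the stationary current identity $\int_{\Bbb T^n}\nabla_x f\cdot(P+\nabla_x v_\pm)\,d\sigma_P=0$ from the closedness of the Mather measure together with the support inclusion ${\rm supp}(dw_P)\subseteq\mathcal{A}_P^\star\subseteq{\rm Graph}(P+\nabla_x v_\pm)$ from \cite{F-G-S}, and pass to the limit against the bounded Borel function $\nabla_x f\cdot(P+\nabla_x v_\pm)$ whose discontinuity set is $d\sigma_P$-null. The only cosmetic difference is that you invoke property (iv) of Remark \ref{EX-a} where the paper cites Lemma \ref{LemmaYang} directly, but these amount to the same convergence argument here.
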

\begin{proof}
Let  $d\sigma_P := \pi_\star (dw_P) = d\mu_P$ with $dw_P$ as in (\ref{def-dwP}). Then, 
$d\sigma_P$ is a Borel probability measure $\Bbb T^n$ with 
\begin{equation}
\label{incl-s}
{\rm supp}(d\sigma_P) \subseteq \pi_\star (\mathcal{M}_P^\star)  \subseteq \pi_\star (\mathcal{A}_P^\star) \subseteq {\rm dom}(\nabla_x v_\pm (P,\cdot \,)) . 
\end{equation} 
Moreover,  it holds
\begin{equation}
\label{f-div}
\int_{\Bbb T^n} \nabla_x f (x) \cdot (P+ \nabla_x v_\pm (P,x)) \  d\sigma_P (x) = 0 \quad \forall f \in C^\infty (\Bbb T^n;\Bbb R). 
\end{equation}
Indeed, $dw_P := \mathcal{L}_\star (d\mu_P)$ and $d\mu_P$ is invariant under Lagrangian flow, hence closed, which means that
$$
\int_{\Bbb T^n \times \Bbb R^n} \nabla_x f (x) \cdot \xi  \ d\mu_P (x,\xi) = 0 \quad \forall f \in C^\infty (\Bbb T^n;\Bbb R). 
$$  
Here the Lagrangian reads $L(x,\xi) = \frac{1}{2} |\xi|^2 + V(x)$ and thus the Legendre transform  $\mathcal{L}(x,\xi) = (x,\xi)$, which gives 
$$
\int_{\Bbb T^n \times \Bbb R^n} \nabla_x f (x) \cdot \eta  \ dw_P (x,\eta) = 0 \quad \forall f \in C^\infty (\Bbb T^n;\Bbb R). 
$$ 
By Lemma 3.1 in \cite{F-G-S}, we have necessary ${\rm supp}(dw_P) \subseteq  \mathcal{A}_P^\star \subseteq {\rm Graph}(P+\nabla_x v_\pm (P,\cdot))$. Thus, we can restrict 
$dw_P|_{{\rm Graph}(P+\nabla_x v_\pm (P,\cdot))}$ since ${\rm Graph}(P+\nabla_x v_\pm (P,\cdot))$ are  Borel measurable subsets of $\Bbb T^n \times \Bbb R^n$ containing the support of this measure. Hence
$$
\int_{{\rm Graph}(P+\nabla_x v_\pm (P,\cdot))} \nabla_x f (x) \cdot \eta  \ dw_P (x,\eta) = 0 \quad \forall f \in C^\infty (\Bbb T^n;\Bbb R). 
$$ 
The canonical projection $\pi : {\rm Graph}(P+\nabla_x v_\pm (P,\cdot)) \rightarrow \Bbb T^n$ is a Borel measurable map, because of $\overline{ {\rm Graph}(P+\nabla_x v_\pm (P,\cdot))} = \Bbb T^n$. We can apply the change of variables and get (\ref{f-div}).\\
Now,  define the Borel probability measure $dm_P (x) := d\sigma_P (x)$ on $\Bbb T^n$. Recalling Remark \ref{EX-a}, there exists $a_{\hbar,P}^\pm   \in C^{k}(\Bbb T^n; \Bbb R^+)$ such that   $ \lim_{\hbar_j \rightarrow 0^+} |a_{\hbar_j ,P}^\pm (x)|^2 =  dm_P (x)$ in the weak-$\star$ convergence of Borel measures on $\Bbb T^n$. Notice that now we do not write $dm_P$  as $dm_P^\pm$ since in fact holds the inclusion (\ref{incl-s}).\\ 
Thus, we look at 
\begin{eqnarray}
\int_{\Bbb T^n} \nabla_x f (x) \cdot  J_\hbar^\pm (x)  dx  =  \int_{\Bbb T^n} \nabla_x f (x) \cdot (P + \nabla_x v_\pm (P ,x)) \  |a_{\hbar,P}^\pm (x)|^2 dx.
\label{J-dec}
\end{eqnarray}
and  observe that the function  
$$
x  \longmapsto \nabla_x f(x) \cdot (P + \nabla_x v_\pm (P,x)) 
$$
is a bounded Borel measurable function, and $x \mapsto \nabla_x v_\pm (P,x)$ is continuous on its domain of definition. Hence, the set of $x \in \Bbb T^n$ such that $\exists$ $\{x_k \}_{k \in \mathbb{N}} \subset \Bbb T^n,\,\,\lim_{k \to+\infty} x_{k} = x$ and 
$$
\lim_{k \rightarrow + \infty}   \nabla_x f (x_k) \cdot (P + \nabla_x v_\pm (P, x_k))\   \neq  \nabla_x f (x) \cdot (P + \nabla_x v_\pm (P,x))\  
$$
is a set of zero $dm_P$-measure.  We now apply Lemma \ref{LemmaYang} to get
\begin{eqnarray}
&&\lim_{j \rightarrow + \infty} \int_{\Bbb T^n} \nabla_x f (x) \cdot  (P + \nabla_x v_\pm (P, x))\   |a_{\hbar_j,P}^\pm (x)|^2  dx 
\\
&=&    \int_{\Bbb T^n} \nabla_x f (x) \cdot  (P + \nabla_x v_\pm (P, x))\   dm_P (x) = 0
\end{eqnarray}
where the last equality is given by the above setting of $dm_P (x) := d\sigma_P (x)$ and (\ref{f-div}).
\end{proof}

\section{Propagation of Wigner measures  on weak KAM tori}\label{pwt}   \markboth{Propagation of  Wigner measures  on weak KAM tori}{}
\subsection{The forward and backward propagation}
The main result of the section reads as 

\begin{theorem}
\label{th51}
Let $\varphi_{\hbar}^{\pm}$ be as in Def. \ref{wave-d} and $\psi_\hbar (t) := e^{-\frac{i}{\hbar}  Op_\hbar^w (H) t} \varphi_\hbar$. Let $\widetilde{m}_P^\pm (t)$
 be a limit of  $W_\hbar \psi_\hbar (t)$ in $L^\infty ([-T,+T]; A^\prime)$,  and $\widetilde{m}_P^\pm $, $g_\pm (P,x)$ be as in Proposition \ref{MM}. Then,  $\widetilde{m}_P^\pm (t) = (\varphi_H^t)_\star (\widetilde{m}_P^\pm) \in \mathcal{M}^{1+} (\Bbb T^n \times \Bbb R^n)$. Moreover, $\forall \phi \in A$ and  $\forall$ $t \ge 0$
\begin{eqnarray}
\int_{\Bbb T^n \times \Bbb R^n}  \phi (x,\eta) \  d\widetilde{m}_P^+ (t,x,\eta)  &=& \int_{\Bbb T^n}  \phi (x,P+ \nabla_x v_{+} (P,x))  \  {\bf g}_+ (t,P, x)    d\sigma_P(x)
\\
{\bf g}_+ (t,P, x) &:=&  g_+ (P, \pi \circ \varphi_H^{-t} (x,P+ \nabla_x v_{-} (P,x)))
\label{defg+}
\end{eqnarray}
Whereas $\forall$ $ t\le 0$
\begin{eqnarray}
\int_{\Bbb T^n \times \Bbb R^n}  \phi (x,\eta)  \  d\widetilde{m}_P^- (t,x,\eta)  &=& \int_{\Bbb T^n}  \phi (x,P+ \nabla_x v_{-} (P,x))  \  {\bf g}_- (t,P, x)    d\sigma_P(x)
\\
{\bf g}_- (t,P, x) &:=&  g_- (P, \pi \circ \varphi_H^{-t} (x,P+ \nabla_x v_{+} (P,x)))
\label{defg-}
\end{eqnarray}
\end{theorem}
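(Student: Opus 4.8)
The plan is to reduce the assertion to the uniqueness of the Liouville flow (Theorem~\ref{TH41}) and then to unwind the resulting push‑forward by means of the invariance properties of the Mather and Aubry objects recalled in Sections~\ref{sec-M}--\ref{sec-Au}.

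\emph{Identification of $\widetilde m_P^\pm(t)$.} Since $\|\psi_\hbar(t)\|_{L^2}=1$, Lemma~\ref{t-depW} extracts, from the subsequence $\hbar_j\to0^+$ fixed in Definition~\ref{wave-d}, a further subsequence along which $W_{\hbar_j}\psi_{\hbar_j}(t)\rightharpoonup\widetilde m_P^\pm(t)$ in $L^\infty([-T,T];A')$. Evaluating at $t=0$ and invoking Theorem~\ref{TH4} gives $\widetilde m_P^\pm(0)=\widetilde m_P^\pm$; Theorem~\ref{TH41} then shows $t\mapsto\widetilde m_P^\pm(t)$ solves the Liouville equation \eqref{eq-Li1}, whose unique solution is the push‑forward of its datum, so $\widetilde m_P^\pm(t)=(\varphi_H^t)_\star(\widetilde m_P^\pm)$ (in particular this is independent of the extracted subsequence). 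By Lemma~\ref{Lem-ac}, $\widetilde m_P^\pm=g_\pm(P,\cdot)\,dw_P$ is a nonnegative Borel measure of mass $\int g_\pm(P,x)\,d\sigma_P(x)=\int dm_P^\pm=1$, supported in the compact $\varphi_H^t$-invariant set $\mathcal A_P^\star$; hence its image under the diffeomorphism $\varphi_H^t$ lies in $\mathcal M^{1+}(\Bbb T^n\times\Bbb R^n)$. (The same follows, with narrow convergence against all of $C_b$, from tightness of $\mathbb P_\hbar^\pm(t)$ — Lemma~\ref{L-T1}, Proposition~\ref{Prop-T} — via Prokhorov's theorem.)

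\emph{Explicit form of the push‑forward.} Fix $t\ge0$ and $\phi\in A$. Using Lemma~\ref{Lem-ac} together with ${\rm supp}(d\sigma_P)\subseteq\pi(\mathcal A_P^\star)\subseteq{\rm dom}(\nabla_x v_+)$,
\[
\int\phi\,d\widetilde m_P^+(t)=\int(\phi\circ\varphi_H^t)\,d\widetilde m_P^+=\int_{\Bbb T^n}(\phi\circ\varphi_H^t)\bigl(x,P+\nabla_x v_+(P,x)\bigr)\,g_+(P,x)\,d\sigma_P(x).
\]
On $\pi(\mathcal A_P^\star)$ all weak KAM solutions have the same gradient and $\mathcal A_P^\star=\{(x,P+\nabla_x v_\pm(P,x)):x\in\pi(\mathcal A_P^\star)\}$ is $\varphi_H^t$-invariant with $\pi|_{\mathcal A_P^\star}$ a bi‑Lipschitz homeomorphism onto $\pi(\mathcal A_P^\star)$ (Section~\ref{sec-Au}); thus $\Psi_t:=\pi\circ\varphi_H^t|_{\pi(\mathcal A_P^\star)}$ is a homeomorphism, $\Psi_t^{-1}=\Psi_{-t}$, and $\varphi_H^t(x,P+\nabla_x v_+(P,x))=(\Psi_t(x),P+\nabla_x v_+(P,\Psi_t(x)))$. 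Moreover $dw_P$ is $\varphi_H^t$-invariant (Legendre image of the flow‑invariant Mather measure), hence $d\sigma_P=\pi_\star dw_P$ is $\Psi_t$-invariant. Substituting $y=\Psi_t(x)$, i.e. $x=\pi\circ\varphi_H^{-t}(y,P+\nabla_x v_+(P,y))$, yields
\[
\int\phi\,d\widetilde m_P^+(t)=\int_{\Bbb T^n}\phi\bigl(y,P+\nabla_x v_+(P,y)\bigr)\,g_+\bigl(P,\pi\circ\varphi_H^{-t}(y,P+\nabla_x v_+(P,y))\bigr)\,d\sigma_P(y),
\]
and on ${\rm supp}(d\sigma_P)$ we may replace $\nabla_x v_+$ by $\nabla_x v_-$ in the inner argument — the natural choice, since for $t\ge0$ the backward flow preserves ${\rm Graph}(P+\nabla_x v_-)$, so ${\bf g}_+(t,P,\cdot)$ is then well‑defined on all of ${\rm dom}(\nabla_x v_-)$. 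This is \eqref{defg+}. The case $t\le0$, $\varphi_\hbar^-$, is identical after exchanging $v_+\leftrightarrow v_-$ and using forward invariance of ${\rm Graph}(P+\nabla_x v_+)$ under $\varphi_H^{-t}$ (with $-t\ge0$), giving \eqref{defg-}.

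\emph{Main obstacle.} The delicate part is the last step: turning the Hamiltonian flow on $\Bbb T^n\times\Bbb R^n$ into a genuine homeomorphism $\Psi_t$ of the projected Aubry set and justifying the change of variables inside the $d\sigma_P$-integral — this rests on the invariance of $\mathcal A_P^\star$, the bi‑Lipschitz injectivity of $\pi|_{\mathcal A_P^\star}$, the coincidence of all weak KAM gradients on $\pi(\mathcal A_P^\star)$, and the $\varphi_H^t$-invariance of $dw_P$. One must also be careful with the bookkeeping of subsequences so that the Liouville datum is exactly $\widetilde m_P^\pm$, so that uniqueness both pins down $\widetilde m_P^\pm(t)$ and removes any apparent subsequence‑dependence.
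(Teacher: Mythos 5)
Your proposal is correct and follows essentially the same route as the paper: Theorem \ref{TH41} plus uniqueness of the Liouville/continuity equation to identify $\widetilde m_P^\pm(t)$ as the push-forward of the Theorem \ref{TH4} datum, tightness (Lemma \ref{L-T1}, Proposition \ref{Prop-T}) or the compact support in $\mathcal A_P^\star$ to land in $\mathcal M^{1+}$, and then the monokinetic form of $dw_P$, its flow-invariance, and the Aubry-set graph structure to perform the change of variables yielding \eqref{defg+}--\eqref{defg-}. The only cosmetic difference is that you carry out the change of variables through the projected homeomorphism $\pi\circ\varphi_H^t|_{\mathcal A_P^\star}$ on the base, while the paper does it directly in phase space before projecting; the content is identical.
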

\begin{proof}

By Theorem \ref{TH41},  any  limit $d{\rm w} (t)$ of the Wigner transform $W_\hbar
\psi_\hbar (t)$ in $L^\infty ([-T,+T]; A^\prime)$  solves the Liouville equation in
the distributional sense $L^\infty ([-T,+T]; A^\prime)$ and hence, thanks to the
uniqueness for the solutions of this continuity equation, it holds $d{\rm w}  (t) =
(\varphi_H^t)_\star (d {\rm w}  (0))$. This also implies that $d{\rm w}  (\cdot) \in
C ([-T,+T]; A^\prime)$. 
On the other hand, for our initial data $\varphi_{\hbar}^{\pm}$ we proved, within
Theorem \ref{TH4}, that the  Wigner transform $W_\hbar \varphi_\hbar^\pm$  is weak
converging (for test functions in A) to the monokinetic probability measures
$d\widetilde{m}_P^\pm \in \mathcal{M}^{1+} (\Bbb T^n \times \Bbb R^n)$. Moreover,
recalling Lemma \ref{L-T1}, the complex measures $\Bbb P_\hbar^\pm$ are tight  and
hence their time evolution $\Bbb P_\hbar^\pm (t)$ is tight as well (see Proposition
\ref{Prop-T}).  This implies that there exist semiclassical limits of  $\Bbb
P_\hbar^\pm(t)$ in the sense of (\ref{NC}), namely there exist weak limits of
$W_\hbar \psi_\hbar (t)$ with respect to test functions in $C_b (\Bbb T^n \times
\Bbb R^n) \supset A$ to some (a priori complex) Borel probability measures for any
fixed $t$. In fact, this means that it must be  $d{\rm w}  (t) = (\varphi_H^t)_\star
( d {\rm w} (0) = d\widetilde{m}_P^\pm ) \in \mathcal{M}^{1+} (\Bbb T^n \times \Bbb
R^n)$.
From now on, we write $d\widetilde{m}_P^\pm (t) := (\varphi_H^t)_\star
(d\widetilde{m}_P^\pm)$.\\ 
Next, we underline that $\forall \phi,\psi \in A$
\begin{eqnarray}
\int_{\Bbb T^n \times \Bbb R^n}  \phi (x,\eta) \  d\widetilde{m}_P^\pm (t,x,\eta)  &=&  \int_{\Bbb T^n \times \Bbb R^n}  \phi \circ \varphi_H^{t} (x,\eta) \  d\widetilde{m}_P^\pm (x,\eta)
\\
\int_{\Bbb T^n \times \Bbb R^n}  \psi (x,\eta) \  d\widetilde{m}_P^\pm  (x,\eta)  &=&   \int_{\Bbb T^n}  \psi (x,P+ \nabla_x v_{\pm} (P,x))  \  g_\pm (P,x)  d\sigma_P(x).
\end{eqnarray}
Hence
\begin{equation}
\label{108}
\int_{\Bbb T^n \times \Bbb R^n}  \phi (x,\eta) \  d\widetilde{m}_P^\pm  (t,x,\eta) = \int_{\Bbb T^n}  \phi \circ \varphi_H^t (x,P+ \nabla_x v_{\pm} (P,x)) \  g_\pm (P,x)  d\sigma_P(x).
\end{equation}
We now recall that $d\sigma_P := \pi_\star (dw_P)$  where $dw_P$ is the Legendre
transform of a Mather P-minimal measure, which takes the monokinetic form
\begin{equation}
\label{form-dw}
\int_{\Bbb T^n \times \Bbb R^n}  \phi (x,\eta) \  dw_P(x,\eta) = \int_{\Bbb T^n}  \phi (x,P+ \nabla_x v_{\pm} (P,x)) \   d\sigma_P(x)
\end{equation}
and $dw_P$ is invariant under the Hamiltonian flow.  This is a consequence of the Lemma 3.1 in \cite{F-G-S}, which gives ${\rm supp}(dw_P) \subseteq  \mathcal{A}_P^\star$ and thanks to the inclusion  $\mathcal{A}_P^\star \subseteq {\rm Graph}(P+\nabla_x v_\pm (P,\cdot))$.\\ 
Hence, we can rewrite
\begin{equation}
\label{1fg}
\int_{\Bbb T^n \times \Bbb R^n}  \phi (x,\eta) \  d\widetilde{m}_P^\pm  (t,x,\eta) = \int_{\Bbb T^n \times \Bbb R^n}  \phi \circ \varphi_H^t (x,\eta) \  g_\pm (P,\pi(x,\eta))  dw_P(x,\eta).
\end{equation}
By the generalized change of variables,
\begin{equation}
\label{13fg}
\int_{\Bbb T^n \times \Bbb R^n}  \phi (x,\eta) \  d\widetilde{m}_P^\pm  (t,x,\eta) = \int_{\Bbb T^n \times \Bbb R^n}  \phi (x,\eta) \  g_\pm (P,\pi \circ \varphi_H^{-t} (x,\eta))  (\varphi_H^{-t})_\star dw_P(x,\eta)
\end{equation}
and thanks to the invariance of $dw_P$,
\begin{equation}
\int_{\Bbb T^n \times \Bbb R^n}  \phi (x,\eta) \  d\widetilde{m}_P^\pm (t,x,\eta) = \int_{\Bbb T^n \times \Bbb R^n}  \phi (x,\eta) \  g_\pm (P,\pi \circ \varphi_H^{-t} (x,\eta))   dw_P(x,\eta).
\end{equation}
 By (\ref{form-dw})
 \begin{equation}
\label{rep-dwt} 
\int_{\Bbb T^n \times \Bbb R^n}  \phi (x,\eta) \  d\widetilde{m}_P^\pm (t,x,\eta) = \int_{\Bbb T^n}  \phi (x,P+ \nabla_x v_{\pm} (P,x)) \  g(P,\pi \circ \varphi_H^{-t} (x,P+ \nabla_x v_{\pm} (P,x)))  \ d\sigma_P(x). 
\end{equation}
Thus, we can define 
 \begin{equation}
 \label{g+1}
{\bf g}_+ (t,P, x) :=  g_+ (P, \pi \circ \varphi_H^{-t} (x,P+ \nabla_x v_{-} (P,x))) \quad  {\rm for} \ t\ge 0
 \end{equation}
and 
 \begin{equation}
{\bf g}_{-} (t,P, x) :=  g_- (P, \pi \circ \varphi_H^{-t} (x,P+ \nabla_x v_{+} (P,x))) \quad  {\rm for} \ t\le 0.
 \label{g-1}
 \end{equation}
\end{proof}

\begin{remark}
We notice that the supports of the measures $d\widetilde{m}_P^\pm (t)$ are contained, for any $t \in \Bbb R$, in the Mather set $\mathcal{M}_P^\star \subseteq \mathcal{A}_P^\star$ in the phase space  which is invariant under the Hamiltonian flow as well as $\mathcal{A}_P^\star$. Hence, these are also contained in any set ${\rm Graph}(P+\nabla_x v_\pm (P,\cdot))$ and this means that we could write   several possible equilvalent Borel measurable density functions ${\bf g}_\pm (t,P, x)$. However, within the next result we underline that the functions ${\bf g}_+$ solve a forward continuity equation with respect to the vector field  $P+\nabla_x v_+ (P,\cdot)$  and  ${\bf g}_-$ solve a backward equation with respect to $P+\nabla_x v_- (P,\cdot)$. 
\end{remark}

\begin{proposition}
 \label{prop53}
Let ${\bf g}_\pm$ and $d \sigma_P$ as in Theorem \ref{th51}. Then, $\forall f \in C^\infty ([0,t] \times \Bbb T^n;\Bbb R)$
\begin{equation}
\label{redc+}
\int_0^t \int_{\Bbb T^n} [\partial_s f(s,x)  +   \nabla_x f(s,x) \cdot (P+ \nabla_x v_{+} (P,x))] \ {\bf g}_+ (s,P,x) d \sigma_P(x) ds = 0 \quad {\rm for} \quad t \ge 0
\end{equation}
and 
\begin{equation}
\label{redc-}
\int_0^t \int_{\Bbb T^n} [\partial_s f(s,x)  +   \nabla_x f(s,x) \cdot (P+ \nabla_x v_{-} (P,x))] \ {\bf g}_- (s,P,x) d \sigma_P(x)  ds = 0 \quad {\rm for} \quad t \le 0
\end{equation}
\end{proposition}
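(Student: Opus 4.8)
The plan is to obtain both identities by specializing the weak Liouville equation \eqref{eq-Li22} to test functions that do not depend on the momentum variable, and then disintegrating the resulting measure-theoretic identity along the weak KAM graph. Indeed, by Theorem \ref{TH41} every $L^\infty([-T,T];A^\prime)$-limit of $W_\hbar\psi_\hbar(t)$ solves \eqref{eq-Li22}, and by Theorem \ref{th51} this limit is precisely $d\widetilde{m}_P^\pm(t)=(\varphi_H^t)_\star(d\widetilde{m}_P^\pm)$; so it suffices to work with that equation, in the same distributional sense (the boundary contributions at $s=0,t$ dropping out for admissible test functions).

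First I would fix a test function $f=f(s,x)$ on $[0,t]\times\Bbb T^n$ and promote it to a function of $(s,x,\eta)$. Since, by Theorem \ref{th51} and the remark preceding the proposition, each $d\widetilde{m}_P^\pm(s)$ is supported in the compact, $\varphi_H$-invariant Aubry set $\mathcal{A}_P^\star$, there is a fixed ball $B_\rho\subset\Bbb R^n$ containing the $\eta$-projections of all these supports; picking $\chi\in C_0^\infty(\Bbb R^n;[0,1])$ with $\chi\equiv1$ on $B_\rho$ and setting $\widetilde f(s,x,\eta):=f(s,x)\chi(\eta)$ we have $\widetilde f(s,\cdot)\in A$ and, because $H=\tfrac12|\eta|^2+V(x)$,
\begin{equation*}
\{\widetilde f,H\}=\chi(\eta)\,\eta\cdot\nabla_x f-f\,\nabla_x V\cdot\nabla_\eta\chi,
\end{equation*}
which on the support of $d\widetilde{m}_P^\pm(s)$ equals $\eta\cdot\nabla_x f$. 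Inserting $\widetilde f$ into \eqref{eq-Li22} then gives
\begin{equation*}
\int_0^t\!\int_{\Bbb T^n\times\Bbb R^n}\big[\partial_s f(s,x)+\eta\cdot\nabla_x f(s,x)\big]\,d\widetilde{m}_P^\pm(s,x,\eta)\,ds=0 .
\end{equation*}
Finally I would use the monokinetic representation of $d\widetilde{m}_P^\pm(s)$ from Theorem \ref{th51} (itself resting on Proposition \ref{MM} and Lemma \ref{Lem-ac}), which — the measures being bounded and, for each $s$, uniformly compactly supported in $\eta$ — extends from $\psi\in A$ to the integrand above and yields
\begin{equation*}
\int_{\Bbb T^n\times\Bbb R^n}\big[\partial_s f+\eta\cdot\nabla_x f\big]\,d\widetilde{m}_P^\pm(s)=\int_{\Bbb T^n}\big[\partial_s f(s,x)+(P+\nabla_x v_\pm(P,x))\cdot\nabla_x f(s,x)\big]\,{\bf g}_\pm(s,P,x)\,d\sigma_P(x);
\end{equation*}
integrating in $s$ over $[0,t]$ then gives \eqref{redc+} for ${\bf g}_+$ ($v_+$, $t\ge0$) and \eqref{redc-} for ${\bf g}_-$ ($v_-$, $t\le0$).

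The step I expect to require the most care is the identification of the momentum profile in the final equation: the Liouville equation only ever involves the true momentum $\eta$, so replacing $\eta\cdot\nabla_x f$ by $(P+\nabla_x v_+)\cdot\nabla_x f$ in the $+$ case and by $(P+\nabla_x v_-)\cdot\nabla_x f$ in the $-$ case must be justified on the support of the measures. This is exactly where Proposition \ref{MM} enters: $d\widetilde{m}_P^\pm$, hence each $d\widetilde{m}_P^\pm(s)$ by invariance of $\mathcal{A}_P^\star$ under $\varphi_H^t$, is concentrated on $\mathrm{Graph}(P+\nabla_x v_\pm(P,\cdot))$ independently of the chosen weak KAM solution, which makes the substitution legitimate and picks out $v_+$ (resp. $v_-$) as in the statement. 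One could alternatively differentiate the explicit formula ${\bf g}_\pm(t,P,x)=g_\pm\big(P,\pi\circ\varphi_H^{-t}(x,P+\nabla_x v_\mp(P,x))\big)$ in $t$, using that $\pi\circ\varphi_H^{t}$ restricted to the graph is the flow of $P+\nabla_x v_\pm$ and that $dw_P$ — hence $d\sigma_P$ under that projected flow — is $\varphi_H$-invariant; but this route is heavier on the low-regularity bookkeeping, so I would prefer the Liouville-equation argument above.
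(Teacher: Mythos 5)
Your proof is correct, but it follows a genuinely different (Eulerian) route from the paper's (Lagrangian) one. The paper identifies ${\bf g}_\pm \, d\sigma_P = \pi_\star d\widetilde{m}_P^\pm(t)$ as the push-forward of the initial measure under the projected flow $\pi(\varphi_H^t|_{\mathcal{A}_P^\star})$, observes that this projected flow is generated by the merely $L^\infty$ vector field ${\rm b}_\pm(x)=P+\nabla_x v_\pm(P,x)$ --- well-posed for $t\ge 0$ (resp.\ $t\le 0$) precisely because of the one-sided invariance of the graphs of $P+\nabla_x v_\pm$ --- and then invokes the proof of Proposition 2.1 in Ambrosio's notes \cite{A} to conclude that a push-forward along a flow solves the continuity equation weakly; the explicit formula for ${\bf g}_\pm$ then comes from the invariance of $d\sigma_P$. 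You instead test the already-established weak Liouville equation of Theorem \ref{TH41} with $\eta$-independent observables $f(s,x)\chi(\eta)$, use the uniform compact support of $d\widetilde{m}_P^\pm(s)$ in $\eta$ (supports confined to the invariant compact set $\mathcal{M}_P^\star$) to reduce $\{f\chi,H\}$ to $\eta\cdot\nabla_x f$, and then invoke the monokinetic representation of Theorem \ref{th51} to trade $\eta$ for $P+\nabla_x v_\pm(P,x)$ on the support. This is self-contained (no external ODE/transport result, no well-posedness discussion for the $L^\infty$ field), and it correctly locates the delicate point --- the substitution of the momentum profile --- in Proposition \ref{MM}/Lemma \ref{Lem-ac}; the restriction to $t\ge0$ for $v_+$ and $t\le0$ for $v_-$ is then inherited from Theorem \ref{th51} rather than from the one-sided solvability of the ODE. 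What the paper's route buys in exchange is the explicit formula \eqref{defg+}--\eqref{defg-} for ${\bf g}_\pm$, which your argument takes as given. One shared imprecision worth noting: as literally stated for all $f\in C^\infty([0,t]\times\Bbb T^n)$ the identity fails for $f$ depending only on $s$ (the left side equals $f(t)-f(0)$ by conservation of mass), so both your derivation and the paper's really prove the statement for $f$ vanishing at $s=0,t$, or with the corresponding boundary terms added; you at least flag this explicitly.
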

\begin{proof}
We recall  $ \varphi_H^{t}|_{\mathcal{A}_P^\star}  : \mathcal{A}_P^\star \rightarrow  \mathcal{A}_P^\star$ is a one parameter group of homeomorphisms on the closed invariant graph $\mathcal{A}_P^\star$ on $\Bbb T^n$, hence
 \begin{equation}
 {\bf g}_+ \ d \sigma_P = \pi_\star  d\widetilde{m}_P(t) =  \pi_\star  (  \varphi_H^{t}  )_\star   d\widetilde{m}_P(0)    = \pi_\star  \Big(  \varphi_H^{t}|_{\mathcal{A}_P^\star}  \Big)_\star   d\widetilde{m}_P(0) =   \Big( \pi( \varphi_H^{t}|_{\mathcal{A}_P^\star} ) \Big)_\star   d\widetilde{m}_P(0)
 \end{equation}
The map $\pi (\varphi_H^{t}|_{\mathcal{A}_P^\star})  : \pi (\mathcal{A}_P^\star) \rightarrow  \pi (\mathcal{A}_P^\star)$  is a one parameter group of homeomorphisms associated with the vector field
 \begin{equation}
{\rm b}_\pm (x) := \frac{d}{dt}  \pi ( \varphi_H^{t} (x,P+ \nabla_x v_{\pm} (P,x)) ) \Big|_{t=0} = \nabla_\eta H(x, P+ \nabla_x v_{\pm} (P,x)) 
 \end{equation}
defined for any $x \in  \pi (\mathcal{A}_P^\star)$ but also in the bigger sets ${\rm dom}(\nabla_x v_{\pm} (P,\cdot))$ defined a.e. $x \in \Bbb T^n$. Here $H (x,\eta)= \frac{1}{2} |\eta|^2 + V(x)$ and thus $\nabla_\eta H (x,\eta)= \eta$. About the regularity, we have  ${\rm b}_\pm \in L^\infty (\Bbb T^n;\Bbb R^n)$. 
Write down the ODE 
\begin{equation}
\label{ODE}
\dot{\gamma} = {\rm b_\pm }(\gamma)
 \end{equation}
with $\gamma(0)=x \in {\rm dom}(\nabla_x v_{\pm} (P,\cdot))$ but remind the inclusions (see Section \ref{sec-Au})
\begin{equation}
\varphi_H^t \Big( {\rm Graph}(P + \nabla_x v_+ (P,\cdot))  \Big) \subseteq {\rm Graph}(P + \nabla_x v_+ (P,\cdot)) \quad \forall t \ge 0
\end{equation}
\begin{equation}
\varphi_H^t \Big( {\rm Graph}(P + \nabla_x v_- (P,\cdot))  \Big) \subseteq {\rm Graph}(P + \nabla_x v_- (P,\cdot)) \quad \forall t \le 0.
\label{neg-g}
\end{equation}
Thus, even if  we have the low regularity   ${\rm b}_\pm \in L^\infty (\Bbb T^n;\Bbb R^n)$ and not (in general) in the larger $W^{1,\infty} (\Bbb T^n; \Bbb R^n)$,  the equation (\ref{ODE}) is well posed and solved for $t\ge 0$ and $\gamma(0)=x \in {\rm dom}(\nabla_x v_{+} (P,\cdot))$, or in the case $t\le 0$ and $\gamma(0)=x \in {\rm dom}(\nabla_x v_{-} (P,\cdot))$.  We are now in the position to apply the same proof of Proposition 2.1 in \cite{A} and get the statement.\\
About the explicit representation of the density $g_+$  for $t \ge 0$, 
\begin{equation}
\int_{\Bbb T^n} \phi(x) {\bf g}_+ (t,P,x) d\sigma_P (x) =  \int_{\Bbb T^n} \phi(\pi( \varphi_H^{t}|_{\mathcal{A}_P^\star} )(x)) g_+ (P,x) d\sigma_P (x) =  \int_{\Bbb T^n} \phi(x) g_+ (P,\pi( \varphi_H^{-t}|_{\mathcal{A}_P^\star} )(x)) d\sigma_P (x) 
\nonumber\\
\end{equation}
since $ d\sigma_P$ is invariant under $\pi( \varphi_H^{-t}|_{\mathcal{A}_P^\star} )$. We are now looking at the Hamiltonian flow for negative times, and we recall ${\rm supp}(d\sigma_P) \subseteq \mathcal{M}_P^\star \subseteq \mathcal{A}_P^\star \subseteq {\rm Graph}(P+\nabla_x v_\pm (P,\cdot))$, thus we can choose the solution
\begin{equation}
{\bf g}_+ (t,P, x) =  g_+ (P, \pi \circ \varphi_H^{-t} (x,P+ \nabla_x v_{-} (P,x))) \quad  {\rm for} \ t\ge 0
 \end{equation}   
as we have choosen in (\ref{g+1}). The same arguments for negative times lead to
 \begin{equation}
{\bf g}_{-} (t,P, x) =  g_- (P, \pi \circ \varphi_H^{-t} (x,P+ \nabla_x v_{+} (P,x))) \quad  {\rm for} \ t\le 0.
 \end{equation}
as we have choosen  in (\ref{g-1}).
\end{proof}

\begin{remark}
Let $\psi_\hbar^\pm (s,x) := e^{-\frac{i}{\hbar}  \mathrm{Op}^w_{\hbar} (H)  s} \varphi_\hbar^\pm (x)$, define the position density  $ \rho_\hbar^\pm (s,x) := | \psi_\hbar^\pm (s,x) |^2$  and the (formal) current  density $J_\hbar^\pm (s,x) :=  \hbar \, {\rm Im} (  (\psi_\hbar^\pm)^\star  \nabla_x \psi_\hbar^\pm (s,x) )$.
The (formal) conservation law reads
\begin{equation}
\partial_t   \rho_\hbar^\pm (t,x) +  {\rm div}_x J_\hbar^\pm (t,x)= 0.
\end{equation}
In the next result we exhibit the well-posed setting.
\end{remark}

\begin{proposition}
Let $\psi_\hbar^\pm (s,x) := e^{-\frac{i}{\hbar}  \mathrm{Op}^w_{\hbar} (H)  s} \varphi_\hbar^\pm (x)$,  $\rho_\hbar^\pm (s,x) := | \psi_\hbar^\pm (s,x) |^2$. Let  $\varphi_{\hbar,\varepsilon}^\pm \in C^\infty (\Bbb T^n;\Bbb C)$ such that $\| \varphi_{\hbar,\varepsilon}^\pm - \varphi_{\hbar}^\pm  \|_{H^1} \rightarrow 0$ as $\varepsilon \rightarrow 0^+$. Define $J_{\hbar,\varepsilon}^\pm (s,x) :=  \hbar \, {\rm Im} (  (\psi_{\hbar,\varepsilon}^\pm)^\star  \nabla_x \psi_{\hbar,\varepsilon}^\pm (s,x) )$ and take a distributional limit $J_{\hbar}^\pm := \lim_{\varepsilon \rightarrow 0^+} J_{\hbar,\varepsilon}^\pm$ in $\mathcal{D}^\prime ([0,T] \times \Bbb T^n)$.
Then,
\begin{equation}
\label{cont-w}
\int_0^t \int_{\Bbb T^n} \partial_s  f(s,x)  \rho_\hbar^\pm (s,x) +   \nabla_x f(s,x) \cdot J_\hbar^\pm (s,x) \ dxds = 0 \quad \forall f \in C^\infty ([0,t] \times \Bbb T^n;\Bbb R).    
\end{equation}
\end{proposition}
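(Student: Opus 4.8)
The plan is to reduce (\ref{cont-w}) to the classical pointwise continuity equation via a regularization, using crucially that $\hbar>0$ is \emph{fixed} and that, by Lemma \ref{equi-op}, ${\rm Op}^w_\hbar(H)=-\frac{\hbar^2}{2}\Delta_x+V$ is self-adjoint on $L^2(\Bbb T^n)$ with form domain $H^1(\Bbb T^n)$. First I would record the $H^1$-regularity of the flow: since $U_\hbar(s):=e^{-\frac{i}{\hbar}{\rm Op}^w_\hbar(H)s}$ commutes with ${\rm Op}^w_\hbar(H)$ and $V$ is bounded, the energy identity $\|\nabla_x U_\hbar(s)\phi\|_{L^2}^2=\frac{2}{\hbar^2}\big(\langle\phi,{\rm Op}^w_\hbar(H)\phi\rangle-\langle U_\hbar(s)\phi,V\,U_\hbar(s)\phi\rangle\big)$ yields $\|U_\hbar(s)\phi\|_{H^1}\le C_\hbar\|\phi\|_{H^1}$ uniformly in $s$. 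Hence $\psi_\hbar^\pm\in C([0,T];H^1(\Bbb T^n))$ (recall $\varphi_\hbar^\pm\in H^1$ was established above), and from $\|\varphi_{\hbar,\varepsilon}^\pm-\varphi_\hbar^\pm\|_{H^1}\to0$ one gets $\psi_{\hbar,\varepsilon}^\pm(s):=U_\hbar(s)\varphi_{\hbar,\varepsilon}^\pm\to\psi_\hbar^\pm(s)$ in $H^1(\Bbb T^n)$, uniformly on $[0,T]$.

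Next I would prove the conservation law for the regularized data. Since $\varphi_{\hbar,\varepsilon}^\pm\in C^\infty(\Bbb T^n)=\bigcap_{k}D\big(({\rm Op}^w_\hbar(H))^k\big)$ and $U_\hbar(s)$ preserves each such domain, $\psi_{\hbar,\varepsilon}^\pm\in C^\infty([0,T]\times\Bbb T^n)$, and differentiating $\rho_{\hbar,\varepsilon}^\pm=|\psi_{\hbar,\varepsilon}^\pm|^2$ using $i\hbar\partial_s\psi_{\hbar,\varepsilon}^\pm=-\frac{\hbar^2}{2}\Delta_x\psi_{\hbar,\varepsilon}^\pm+V\psi_{\hbar,\varepsilon}^\pm$ gives the pointwise identity $\partial_s\rho_{\hbar,\varepsilon}^\pm+{\rm div}_x J_{\hbar,\varepsilon}^\pm=0$: the potential contributes nothing because $V|\psi_{\hbar,\varepsilon}^\pm|^2$ is real, and the kinetic term equals $-{\rm div}_xJ_{\hbar,\varepsilon}^\pm$ because $|\nabla_x\psi_{\hbar,\varepsilon}^\pm|^2$ is real. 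Pairing with $f\in C^\infty([0,t]\times\Bbb T^n;\Bbb R)$ and integrating by parts in $x$ (no boundary terms on $\Bbb T^n$) and in $s$, the bulk cancels and one is left with $\int_0^t\int_{\Bbb T^n}\big(\partial_s f\,\rho_{\hbar,\varepsilon}^\pm+\nabla_x f\cdot J_{\hbar,\varepsilon}^\pm\big)dxds=\int_{\Bbb T^n}\big(f\rho_{\hbar,\varepsilon}^\pm\big)\big|_{s=0}^{s=t}dx$, which reduces to $0$ precisely when $f$ vanishes at $s=0$ and $s=t$ — the convention implicit in the weak formulations (\ref{eq-Li22}), (\ref{redc+}), (\ref{redc-}) used above.

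Finally I would let $\varepsilon\to0^+$. The bilinear splittings $\rho_{\hbar,\varepsilon}^\pm-\rho_\hbar^\pm=(\psi_{\hbar,\varepsilon}^\pm-\psi_\hbar^\pm)\overline{\psi_{\hbar,\varepsilon}^\pm}+\psi_\hbar^\pm\overline{(\psi_{\hbar,\varepsilon}^\pm-\psi_\hbar^\pm)}$ and $J_{\hbar,\varepsilon}^\pm-J_\hbar^\pm=\hbar\,{\rm Im}\big((\psi_{\hbar,\varepsilon}^\pm-\psi_\hbar^\pm)^\star\nabla_x\psi_{\hbar,\varepsilon}^\pm+(\psi_\hbar^\pm)^\star\nabla_x(\psi_{\hbar,\varepsilon}^\pm-\psi_\hbar^\pm)\big)$, together with the uniform $H^1$-convergence and Cauchy--Schwarz, show $\rho_{\hbar,\varepsilon}^\pm\to\rho_\hbar^\pm$ and $J_{\hbar,\varepsilon}^\pm\to\hbar\,{\rm Im}\big((\psi_\hbar^\pm)^\star\nabla_x\psi_\hbar^\pm\big)$ in $L^1([0,t]\times\Bbb T^n)$; in particular the distributional limit $J_\hbar^\pm:=\lim_{\varepsilon\to0^+}J_{\hbar,\varepsilon}^\pm$ in $\mathcal{D}^\prime$ is this $L^1$-function and is independent of the approximating sequence. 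Since $\partial_s f,\nabla_x f$ are bounded, passing to the limit in the regularized identity yields (\ref{cont-w}).

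The main obstacle — and the reason the mollification step is unavoidable — is the low regularity of $\varphi_\hbar^\pm$: being only in $H^1$ (because the phase $v_\pm$ is merely Lipschitz), neither $\rho_\hbar^\pm$ nor $J_\hbar^\pm$ is classically differentiable and the pointwise continuity equation is meaningless for $\psi_\hbar^\pm$ itself, so one cannot integrate by parts directly. The enabling fact is the uniform-in-$s$ $H^1$-boundedness of the propagator at fixed $\hbar$, which transports the classical identity for $C^\infty$ data to the $H^1$-setting in the limit; the $\hbar$-dependence of all constants is irrelevant since $\hbar$ is held fixed throughout.
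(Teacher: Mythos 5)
Your proposal is correct and follows the same overall strategy as the paper: establish the pointwise continuity equation for the mollified data $\varphi_{\hbar,\varepsilon}^\pm$ and pass to the limit $\varepsilon\to 0^+$ at fixed $\hbar$. The difference lies in how the limit is taken. The paper only uses conservation of the energy $E[\psi_{\hbar,\varepsilon}^\pm(s)]$ to get an $L^1$ bound on $J_{\hbar,\varepsilon}^\pm$ uniform in $(\varepsilon,s)$, and then invokes the distributional limit $J_\hbar^\pm$ (which is part of the hypothesis) together with the weak-$\star$ convergence of $\rho_{\hbar,\varepsilon}^\pm$; it does not identify $J_\hbar^\pm$ beyond being an element of $\mathcal D'$. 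You instead prove that $U_\hbar(s)$ is bounded on $H^1(\Bbb T^n)$ uniformly in $s$ at fixed $\hbar$ (again via energy conservation), deduce $\psi_{\hbar,\varepsilon}^\pm(s)\to\psi_\hbar^\pm(s)$ in $H^1$ uniformly on $[0,T]$, and obtain \emph{strong} $L^1$ convergence of both $\rho_{\hbar,\varepsilon}^\pm$ and $J_{\hbar,\varepsilon}^\pm$; this buys the stronger conclusion that $J_\hbar^\pm$ is the $L^1$ function $\hbar\,{\rm Im}\big((\psi_\hbar^\pm)^\star\nabla_x\psi_\hbar^\pm\big)$ and is independent of the mollifying sequence, which the paper's argument does not establish. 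You are also right to flag that pairing the pointwise identity with $f\in C^\infty([0,t]\times\Bbb T^n)$ leaves the boundary term $\int_{\Bbb T^n}\big(f\rho_{\hbar,\varepsilon}^\pm\big)\big|_{s=0}^{s=t}dx$, so \eqref{cont-w} as stated implicitly requires $f$ to vanish at $s=0,t$ (the convention of \eqref{eq-Li22}); the paper glosses over this point.
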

\begin{proof}
This equation well posed. Indeed,
\begin{eqnarray}
E [\psi_{\hbar,\varepsilon}^\pm (s) ]  &:=&   \int_{\Bbb T^n} \frac{\hbar^2}{2} |\nabla_x \psi_{\hbar,\varepsilon}^\pm (s,x) |^2 + V(x) | \psi_{\hbar,\varepsilon} (s,x) |^2 dx 
\\
&=& \int_{\Bbb T^n} \frac{\hbar^2}{2} |\nabla_x \psi_{\hbar,\varepsilon}^\pm (0,x) |^2 + V(x) | \psi_{\hbar,0} (s,x) |^2 dx 
\\
&\rightarrow& \int_{\Bbb T^n} \frac{\hbar^2}{2} |\nabla_x \varphi_\hbar^\pm (x) |^2 + V(x) |\varphi_\hbar^\pm (x) |^2 dx  \quad {\rm as} \quad \varepsilon \rightarrow 0^+
\\ 
&=&   \int_{\Bbb T^n} \Big( \frac{1}{2} |P + \nabla_x v_\pm (P ,x)|^2 + V(x) \Big) |a_{\hbar,P}^\pm(x)|^2 dx 
+ \int_{\Bbb T^n}  \frac{\hbar^2}{2}   |\nabla_x a_{\hbar,P}^\pm(x)|^2 dx
\nonumber\\ 
&=&  \bar{H}(P) + \int_{\Bbb T^n}  \frac{\hbar^2}{2}   |\nabla_x a_{\hbar,P}^\pm(x)|^2 dx < +\infty \quad \forall \ 0 < \hbar < 1
\end{eqnarray}
since  $\hbar \| \nabla_x a_{\hbar,P}^\pm \|_{L^2} \rightarrow 0$  thanks to the setting of $a_{\hbar,P}^\pm$.\\ 
Hence $\| J_{\hbar,\varepsilon}^\pm (s,\cdot) \|_{L^1} \le  \| \psi_{\hbar,\varepsilon} (s,\cdot)  \|_{L^2} \| \hbar  \nabla_x \psi_{\hbar,\varepsilon} (s,\cdot)\|_{L^2} \le c \, \| \hbar  \nabla_x \psi_{\hbar,\varepsilon}^\pm (s,\cdot)\|_{L^2} < + \infty$ uniformly in $(\varepsilon, s) \in (0,1] \times [0,t]$.  We can take a distributional limit  $J_{\hbar}^\pm := \lim_{\varepsilon \rightarrow 0^+} J_{\hbar,\varepsilon}^\pm$ in $\mathcal{D}^\prime ([0,T] \times \Bbb T^n)$ and this gives
$$
\lim_{\varepsilon \rightarrow 0^+}  \int_{\Bbb T^n}   \nabla_x f(s,x) \cdot J_{\hbar,\varepsilon}^\pm (s,x) \ dx =  \int_{\Bbb T^n}   \nabla_x f(s,x) \cdot J_\hbar^\pm (s,x) \ dx \quad \forall s \in [0,t] 
$$
Since $\rho_{\hbar,\varepsilon}^\pm$ is weak-$\star$ converging to the unique $\rho_{\hbar}^\pm \in L^1 ([0,T] \times \Bbb T^n;\Bbb R^+)$, we deduce that  the equation (\ref{cont-w}) solved by $(\rho_{\hbar,\varepsilon}^\pm (s,x), J_{\hbar,\varepsilon}^\pm(s,x))$ (in the distributional and in the strong sense) is also fulfilled by $(\rho_{\hbar}^\pm (s,x), J_{\hbar}^\pm (s,x))$ in the distributional sense. 
\end{proof}

The last result of the section reads
\begin{corollary}
Fix $P \in \Bbb R^n$, suppose that  $v_{+} (P,\cdot) = v_{-} (P,\cdot) \in C^{2}(\Bbb T^n;\Bbb R)$ and $g(P,\cdot) \in W^{1,\infty} (\Bbb T^n; \Bbb R^+)$.  Then,  ${\bf g}_\pm$ as in Theorem \ref{th51} fulfill ${\bf g}_+ = {\bf g}_-$, 
${\bf g}_\pm \in L^1 ([0,T]; W^{1,\infty}  (\Bbb T^n; \Bbb R^+) )$  and solves the transport equation
\begin{equation}
\label{tr11}
\partial_t   {\bf g}_\pm (t,P,x)  +  (P+ \nabla_x v_{\pm} (P,x)) \cdot \nabla_x {\bf g}_\pm (t,P,x) = 0 \quad {\rm for} \quad t \in \Bbb R
\end{equation}
with inital datum ${\bf g}_\pm (0,P,x) := g(P,x)$.  
\end{corollary}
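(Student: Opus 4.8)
The plan is to reduce everything to the classical method of characteristics once the extra regularity is exploited. Since $v := v_{+}(P,\cdot) = v_{-}(P,\cdot) \in C^{2}(\Bbb T^n;\Bbb R)$, the vector field ${\rm b}(x) := P + \nabla_x v(P,x)$ lies in $C^{1}(\Bbb T^n;\Bbb R^n)$, so by the standard existence--uniqueness theory for ODEs it generates a genuine $C^{1}$ one-parameter group of diffeomorphisms $\Phi^{t} : \Bbb T^n \to \Bbb T^n$, $t\in\Bbb R$. Moreover, because $v_{+} = v_{-}$, both graph-invariance properties recalled in Section \ref{sec-Au} apply, so ${\rm Graph}(P+\nabla_x v(P,\cdot))$ is invariant under $\varphi_H^{t}$ for all $t\in\Bbb R$ and $\Phi^{t}(x) = \pi\circ\varphi_H^{t}(x,P+\nabla_x v(P,x))$ (indeed $\tfrac{d}{dt}\Phi^{t}(x) = \nabla_\eta H(\Phi^{t}(x),P+\nabla_x v(P,\Phi^{t}(x))) = {\rm b}(\Phi^{t}(x))$, exactly the ODE (\ref{ODE})). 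Differentiating the variational equation $\tfrac{d}{dt}D\Phi^{t} = D{\rm b}(\Phi^{t})\,D\Phi^{t}$ and applying Gronwall gives $\sup_{|t|\le T}\bigl(\|D\Phi^{t}\|_{L^\infty} + \|D\Phi^{-t}\|_{L^\infty}\bigr) \le e^{\|\nabla^2 v\|_{L^\infty}T}$.

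Next I would observe that, because $v_{+} = v_{-}$ and the common datum $g(P,\cdot)$ plays the role of both $g_{+}$ and $g_{-}$, the formulas (\ref{defg+}) and (\ref{defg-}) collapse to the single expression ${\bf g}_{+}(t,P,x) = {\bf g}_{-}(t,P,x) = g(P,\Phi^{-t}(x))$, now legitimately defined for every $x\in\Bbb T^n$ and every $t\in\Bbb R$; this is the asserted equality ${\bf g}_{+} = {\bf g}_{-}$, and evaluating at $t=0$ gives ${\bf g}_{\pm}(0,P,x) = g(P,x)$. For the regularity, ${\bf g}_{\pm}(t,\cdot) = g(P,\cdot)\circ\Phi^{-t}$ is the composition of a Lipschitz function with a $C^{1}$ diffeomorphism, hence Lipschitz, with $\|{\bf g}_{\pm}(t,\cdot)\|_{W^{1,\infty}} \le \|g(P,\cdot)\|_{W^{1,\infty}}\max(1,\|D\Phi^{-t}\|_{L^\infty}) \le \|g(P,\cdot)\|_{W^{1,\infty}}e^{\|\nabla^2 v\|_{L^\infty}T}$ for $|t|\le T$; in particular ${\bf g}_{\pm} \in L^\infty([-T,T];W^{1,\infty}(\Bbb T^n;\Bbb R^+)) \subset L^{1}([0,T];W^{1,\infty}(\Bbb T^n;\Bbb R^+))$.

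Finally, for the transport equation I would run the characteristics argument directly. The map $(t,x)\mapsto\Phi^{-t}(x)$ is $C^{1}$, so ${\bf g}_{\pm}$ is Lipschitz jointly in $(t,x)$ and hence differentiable a.e.; moreover, if $N\subset\Bbb T^n$ is the Lebesgue-null set where $g(P,\cdot)$ fails to be differentiable, then $\{(t,x) : \Phi^{-t}(x)\in N\}$ is null in $[-T,T]\times\Bbb T^n$ since each $\Phi^{t}$ is bi-Lipschitz (Fubini). Off this null set the chain rule gives $\partial_t{\bf g}_{\pm}(t,x) = -\nabla g(P,\Phi^{-t}x)\cdot{\rm b}(\Phi^{-t}x)$ and $\nabla_x{\bf g}_{\pm}(t,x) = D\Phi^{-t}(x)^{\mathrm{T}}\nabla g(P,\Phi^{-t}x)$, while differentiating the group identity $\Phi^{-t}\circ\Phi^{s} = \Phi^{s-t}$ in $s$ at $s=0$ yields $D\Phi^{-t}(x)\,{\rm b}(x) = {\rm b}(\Phi^{-t}x)$; combining these three identities makes the left-hand side of (\ref{tr11}) equal to $\nabla g(P,\Phi^{-t}x)\cdot\bigl[D\Phi^{-t}(x)\,{\rm b}(x) - {\rm b}(\Phi^{-t}x)\bigr] = 0$ a.e. (Alternatively, mollify $g(P,\cdot)$ on $\Bbb T^n$, for which the identity is classical, and pass to the limit in the distributional formulation using the uniform $W^{1,\infty}$ bound just obtained together with ${\rm div}\,{\rm b} = \Delta v \in C^{0}$.) The only point requiring genuine care is the measure-theoretic bookkeeping in this last step — checking that the chain rule holds on a full-measure set and that the resulting pointwise identity is consistent with the weaker $d\sigma_P$-tested equation of Proposition \ref{prop53}; once the explicit formula ${\bf g}_{\pm} = g(P,\cdot)\circ\Phi^{-t}$ and the $C^{1}$-regularity of $\Phi$ are in hand, everything else is routine.
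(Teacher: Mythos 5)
Your proof is correct and follows essentially the same route as the paper: the paper's entire argument is the observation that $v_{\pm}\in C^{2}$ makes $P+\nabla_x v_{\pm}$ a $C^{1}$ vector field, followed by an appeal to ``standard transport PDE arguments'' (citing Ambrosio), which is exactly the method-of-characteristics computation you carry out. The only difference is that you supply the details the paper delegates to the reference — the identification $\Phi^{t}(x)=\pi\circ\varphi_H^{t}(x,P+\nabla_x v(P,x))$ via the two-sided graph invariance, the Gronwall bound on $D\Phi^{t}$, and the a.e.\ chain rule for the Lipschitz density — all of which are handled correctly.
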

\begin{proof}
The regularity $v_{\pm} (P,\cdot) \in C^{2}(\Bbb T^n;\Bbb R)$ implies the $C^1$-regularity of the vector field $P+ \nabla_x v_{\pm} (P,\cdot)$ on $\Bbb T^n$. By standard transport PDE arguments (see for example \cite{A}) we get the above equations.
\end{proof}

\section{Appendix}      \markboth{Appendix}{}
\begin{lemma}
\label{equi-op}
Let $\hat{H}_\hbar := - \frac{1}{2} \hbar^2 \Delta_x + V(x)$, $H:=\frac{1}{2} |\eta|^2 + V(x)$  and ${\rm Op}^w_\hbar (H)$ as in  (\ref{weyl}). Then, 
\begin{equation}
\label{equi-H}
  {\rm Op}^w_\hbar (H) \psi = \hat{H}_\hbar   \psi, \quad \forall \, \psi \in C^\infty (\mathbb{T}^n;\Bbb C).
\end{equation} 
\end{lemma}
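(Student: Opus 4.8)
The plan is to check the identity first on the Fourier basis $e_\alpha(x):=e^{i\langle x,\alpha\rangle}$, $\alpha\in\Bbb Z^n$, and then to pass to an arbitrary $\psi\in C^\infty(\Bbb T^n)$ by expanding $\psi=\sum_{\alpha\in\Bbb Z^n}\widehat\psi_\alpha e_\alpha$. Since $H\in S^2(\Bbb T^n\times\Bbb R^n)$, the toroidal calculus of \cite{R-T} makes $\mathrm{Op}^w_\hbar(H)$ a continuous operator from $C^\infty(\Bbb T^n)$ to itself, and $\hat H_\hbar$ is trivially continuous there; as the partial sums of the Fourier series of a smooth function converge in the $C^\infty$ topology, it will be enough to establish $\mathrm{Op}^w_\hbar(H)e_\alpha=\hat H_\hbar e_\alpha$ for every $\alpha$, the general case following by linearity and continuity (the series $\sum_\alpha\widehat\psi_\alpha\frac{\hbar^2}{2}|\alpha|^2e_\alpha$ converging absolutely because $\widehat\psi_\alpha$ is rapidly decreasing).

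To treat the single mode I would split $H=\frac12|\eta|^2+V(x)$ and use (\ref{weyl}) together with $e_\alpha(2y-x)=e^{2i\langle y,\alpha\rangle}e^{-i\langle x,\alpha\rangle}$. For the kinetic part the relevant symbol value $\frac12|\frac{\hbar}{2}\kappa|^2=\frac{\hbar^2}{8}|\kappa|^2$ does not depend on $y$, so carrying out the $y$-integration first and using $\int_{\Bbb T^n}e^{i\langle y,m\rangle}\,dy=(2\pi)^n$ if $m=0$ and $0$ otherwise, only the index $\kappa=2\alpha$ survives, which yields $\mathrm{Op}^w_\hbar(\frac12|\eta|^2)e_\alpha=\frac{\hbar^2}{8}|2\alpha|^2e_\alpha=\frac{\hbar^2}{2}|\alpha|^2e_\alpha$. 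For the potential part the symbol is $\eta$-independent; writing the Fourier expansion $V(y)=\sum_\omega V_\omega e^{i\langle y,\omega\rangle}$ with $V_\omega$ rapidly decreasing (since $V\in C^\infty$) and again integrating in $y$, for each $\omega$ only $\kappa=\omega+2\alpha$ survives, and summing over $\omega$ gives $\mathrm{Op}^w_\hbar(V)e_\alpha=V\,e_\alpha$. Comparing with $\hat H_\hbar e_\alpha=-\frac12\hbar^2\Delta_x e_\alpha+V e_\alpha=\frac{\hbar^2}{2}|\alpha|^2e_\alpha+V e_\alpha$ closes the mode-by-mode step, and the reassembly described above then gives (\ref{equi-H}).

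The only genuinely delicate point — and the one I would flag as the main obstacle — is the legitimacy of exchanging the sum over $\kappa$ in (\ref{weyl}) with the $y$-integration (and, for the potential, with the Fourier expansion of $V$), since for the kinetic symbol $\frac{\hbar^2}{8}|\kappa|^2$ this sum is only conditionally convergent and in fact only makes sense as a distribution (a derivative of the periodic delta). Restricting to one Fourier mode $e_\alpha$ is exactly what removes the difficulty: pairing that distribution in $y$ against the smooth function $y\mapsto e^{2i\langle y,\alpha\rangle}$ (for the kinetic piece) or $y\mapsto V(y)e^{2i\langle y,\alpha\rangle}$ (for the potential piece, whose Fourier series converges in $C^\infty_y$) is well defined and picks out a single $\kappa$, after which the rapid decay of $V_\omega$ and of $\widehat\psi_\alpha$ makes all remaining interchanges absolutely convergent. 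One should also record, as in \cite{R-T}, the continuity of $\mathrm{Op}^w_\hbar(H)$ on $C^\infty(\Bbb T^n)$ used in the passage from $e_\alpha$ to a general $\psi$.
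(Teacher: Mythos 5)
Your argument is correct, but it proceeds along a different route from the paper. The paper's proof never touches the Fourier basis: it uses the factorization $\mathrm{Op}^w_\hbar(b)\psi(x)=\bigl(b(X,\tfrac{\hbar}{2}D)\circ T_x\psi\bigr)(x)$ from (\ref{eq-O}), identifies $H(X,\tfrac{\hbar}{2}D)$ as the differential operator $-\tfrac{\hbar^2}{8}\Delta+V$, and recovers the missing factor $4$ from the chain rule applied to $(T_x\psi)(y)=\psi(2y-x)$ before evaluating at $y=x$. Your mode-by-mode computation reaches the same place: the surviving index $\kappa=2\alpha$ in the kinetic part produces $\tfrac12|\hbar\alpha|^2=\tfrac{\hbar^2}{2}|\alpha|^2$, which is exactly the same factor-of-$4$ bookkeeping in Fourier language, and the potential part correctly collapses to multiplication by $V$. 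What your approach buys is transparency about convergence: on a single exponential the $\kappa$-sum has exactly one nonzero term, so the "delicate point" you flag essentially evaporates (note that in (\ref{weyl}) the $y$-integral already sits inside the $\kappa$-sum, so no interchange is needed there; the only real issue is summability of the resulting terms, which rapid decay of $V_\omega$ and $\widehat\psi_\alpha$ settles). What it costs is the reliance on continuity of $\mathrm{Op}^w_\hbar(H)$ on $C^\infty(\Bbb T^n)$ to pass from $e_\alpha$ to general $\psi$; the paper only records $b(X,D):C^\infty\to\mathcal{D}'$, so you should either cite the $C^\infty\to C^\infty$ continuity from \cite{R-T} explicitly, as you propose, or observe that convergence of $\mathrm{Op}^w_\hbar(H)\psi_N$ in $\mathcal{D}'$ together with convergence of $\hat H_\hbar\psi_N$ in $C^\infty$ already identifies the two limits. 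Either fix is routine, and the proof stands.
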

\begin{proof}
To begin,  we recall that 
\begin{equation}
\label{eq-O2}
\mathrm{Op}^w_{\hbar} (b)  \psi (x) = (b(X,\frac{\hbar}{2}D) \circ T_x \, \psi )(x).
\end{equation}
where $(T_x \psi) (y) := \psi (2y-x)$, see Section \ref{Weyl}. Morever, it is easily proved that when $H=\frac{1}{2} |\eta|^2 + V(x)$ 
\begin{equation}
\label{eq-O2}
H \Big(X,\frac{\hbar}{2}D\Big) \psi = \hat{H}_\hbar \psi
\end{equation}
for $\psi \in C^\infty (\mathbb{T}^n; \Bbb C)$ and that $(\hat{H}_\hbar   T_x \psi) (x) = \hat{H}_\hbar  \psi (x)$. Thus, by (\ref{eq-O}) and (\ref{eq-O2}) we get the statement.
\end{proof}

\begin{remark}
\label{L-reg}
The operator  $\hat{H}_\hbar  :  H^{2}  (\mathbb{T}^n; \Bbb C) \rightarrow L^2 (\mathbb{T}^n; \Bbb C)$ is linear, selfadjoint and continuous. Hence, by standard results of evolution equations in Banach spaces, the solution of the Schr\"odinger equation (\ref{eqSch}) fulfills $\psi_\hbar \in C^0 (\Bbb R; H^{2} (\Bbb T^n;\Bbb C)) \cap C^1(\Bbb R; L^2(\Bbb T^n;\Bbb C))$. The one parameter group of unitary operators $e^{-\frac{i}{\hbar}  \mathrm{Op}^w_{\hbar} (H)  t}$ can be defined on $L^{2}  (\mathbb{T}^n; \Bbb C)$ and $e^{-\frac{i}{\hbar}  \mathrm{Op}^w_{\hbar} (H)  t} \varphi \in  C^0 (\Bbb R; L^{2} (\Bbb T^n;\Bbb C))$  (see for example \cite{R-S}).
\end{remark}

\begin{remark}
We recall that  ${\bf b} \in S^m (\mathbb{R}^n \times \mathbb{R}^n)$ consist of ${\bf b} \in C^\infty (\mathbb{R}^n \times \mathbb{R}^n;\Bbb R)$ satisfying (\ref{symb00}). 
The Weyl quantization on $\Bbb R^n$ of these simbols  reads
\begin{equation}
\label{W-Rn}
{\bf b}^w (X,\hbar D) \psi (x) := (2\pi\hbar)^{-n} \int_{\mathbb{R}^{n}}  \int_{\mathbb{R}^{n}}    e^{\frac{i}{\hbar} \langle x-y,\eta\rangle}  b \Big(\frac{x+y}{2},\eta\Big)\psi(y)dyd\eta, \quad  \psi  \in \mathcal{S}  (\mathbb{R}^n;\Bbb C).
\end{equation}
Notice that in the case of $H(x,\eta) = \frac{1}{2} |\eta|^2 + V(x)$ with $V \in C^\infty (\Bbb R^n;\Bbb R)$,  it holds the equivalence $H^w (X,\hbar D) = \hat{H}_\hbar$ on the domain $\mathcal{S}  (\mathbb{R}^n;\Bbb C)$ (see for example Section 2.7 in \cite{Mar}). By the  identification $\Bbb T^n \equiv (\Bbb R / 2\pi \Bbb Z)^n$, if $V$ is $2 \pi \Bbb Z^n$-periodic then we could restrict $H^w (X,\hbar D) : C^\infty (\mathbb{T}^n; \Bbb C) \rightarrow C^\infty (\mathbb{T}^n; \Bbb C)$. Obviously,  this restriction cannot be done for all simbols in  $S^m (\mathbb{R}^n \times \mathbb{R}^n)$ which are $2 \pi \Bbb Z^n$-periodic in $x$-variables. For a more detailed and general discussion about the link between Pseudodifferential Operators on $\Bbb T^n$ and Pseudodifferential Operators on $\Bbb R^n$ which are $2 \pi \Bbb Z^n$-periodic in $x$-variables, we address the reader to Section 6 in  \cite{R-T}.
\end{remark}

The following result is shown in \cite{Yang}. 
\begin{lemma}
\label{LemmaYang}
Let $X$ be a metric space. Let $d\mu_j$ $j\in\mathbb{N}$ 
and $d\mu$ Borel probability measures on $X$ such that $d\mu_j \stackrel{\mathrm{w-}\star}{\longrightarrow}d\mu$ as $j\to+\infty.$ Let $f_k,f \colon X\longrightarrow\mathbb{R}$ ($k\in\mathbb{N}$) 
be Borel measurable functions such that
\begin{equation}
\lim_{\lambda \to+\infty} \ \sup_{k\in\mathbb{N}}  \ \int_{\{x\in X;\,\,|f_k(x)|>\lambda \}}|f_k(x)| \ d\mu_k (x)=0.
\label{eqYang1}\end{equation}
Let
\begin{equation}
E:=\Big\{  x\in X;\,\,\exists\{x_k\}_{k\in\mathbb{N}}\subset X,\,\,\lim_{k\to+\infty}x_k=x,\,\,\,\lim_{k\to+\infty}f_k(x_k)\not=f(x) \Big\}.
\label{eqYang2}\end{equation}
If $\mu(E)=0$ then
$$
\lim_{j\to+\infty}\int_Xf_j(x)d\mu_j (x) =\int_Xf(x)d\mu(x).
$$
\end{lemma}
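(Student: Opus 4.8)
The plan is to derive the statement from Skorokhod's representation theorem combined with Vitali's convergence theorem. In every application of this lemma in the present paper the space $X$ is Polish ($\Bbb T^n$ or $\Bbb T^n\times\Bbb R^n$), so this route is available, and I will indicate at the end the truncation-and-envelope argument that replaces it on a genuinely arbitrary metric space. First I would transport everything onto a common probability space: from the weak-$\star$ convergence $d\mu_j\to d\mu$ of Borel probability measures, Skorokhod's theorem furnishes a probability space $(\Omega,\mathcal{F},\Bbb P)$ and Borel maps $Y_j,Y\colon\Omega\to X$ with $(Y_j)_\star\Bbb P=d\mu_j$, $Y_\star\Bbb P=d\mu$, and $Y_j(\omega)\to Y(\omega)$ for $\Bbb P$-a.e.\ $\omega$. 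By the image-measure formula $\int_X f_j\,d\mu_j=\Bbb E[f_j\circ Y_j]$ and $\int_X f\,d\mu=\Bbb E[f\circ Y]$ (here $f_j\in L^1(d\mu_j)$, because (\ref{eqYang1}) forces $\sup_j\int_X|f_j|\,d\mu_j<+\infty$), so it suffices to prove $\Bbb E[f_j\circ Y_j]\to\Bbb E[f\circ Y]$.

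The second step is almost-sure convergence of the integrands. Set $N:=\{\omega:\ Y_j(\omega)\not\to Y(\omega)\}\cup\{\omega:\ Y(\omega)\in E\}$; then $\Bbb P(N)\le\mu(E)=0$. For $\omega\notin N$ one has $Y_j(\omega)\to Y(\omega)$ with $Y(\omega)\notin E$, and reading (\ref{eqYang2}) for the complement of $E$ says exactly that $f_j(Y_j(\omega))\to f(Y(\omega))$; hence $f_j\circ Y_j\to f\circ Y$ $\Bbb P$-almost surely. This is the point where the somewhat unusual shape of $E$ --- built from the joint behaviour $f_k(x_k)$ as $x_k\to x$, not from $f_k(x)$ versus $f(x)$ alone --- is precisely what is needed.

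The third step upgrades this to convergence of expectations through Vitali's theorem, whose hypothesis is uniform integrability of $\{f_j\circ Y_j\}_j$. For every $\lambda>0$,
\[
\sup_j\,\Bbb E\big[\,|f_j\circ Y_j|\,\mathbf 1_{\{|f_j\circ Y_j|>\lambda\}}\big]=\sup_j\,\int_{\{x\in X:\ |f_j(x)|>\lambda\}}|f_j(x)|\,d\mu_j(x),
\]
and the right-hand side tends to $0$ as $\lambda\to+\infty$ by (\ref{eqYang1}). Thus $\{f_j\circ Y_j\}_j$ is uniformly integrable, and together with the almost-sure convergence just established Vitali's convergence theorem yields $f\circ Y\in L^1(\Bbb P)$ and $\Bbb E[f_j\circ Y_j]\to\Bbb E[f\circ Y]$, that is, $\int_X f_j\,d\mu_j\to\int_X f\,d\mu$.

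The main obstacle, if one prefers not to invoke Skorokhod (or works on a non-separable $X$), is a direct bounded-integrand argument. One truncates by $T_\lambda(s):=\max(-\lambda,\min(\lambda,s))$, uses (\ref{eqYang1}) to get $\sup_j\|f_j-T_\lambda\circ f_j\|_{L^1(d\mu_j)}\to0$ and, similarly, $\|f-T_\lambda\circ f\|_{L^1(d\mu)}\to0$ (the latter needs $f\in L^1(d\mu)$, which follows a posteriori from the bounded case applied to $|f_j|\wedge\lambda$ together with monotone convergence); the bounded sequence $T_\lambda\circ f_j$ still converges continuously to $T_\lambda\circ f$ off $E$ because $T_\lambda$ is continuous, and for such uniformly bounded families one proves $\limsup_j\int(T_\lambda\circ f_j)\,d\mu_j\le\int u\,d\mu$ with $u(x):=\inf_m\sup\{(T_\lambda\circ f_k)(y):\ k\ge m,\ d(y,x)<1/m\}$ an upper semicontinuous function equal to $T_\lambda\circ f$ for $\mu$-a.e.\ $x$, via the portmanteau inequality for upper semicontinuous integrands; the reverse bound comes from applying this to $-f_j$, and one then lets $\lambda\to+\infty$. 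I expect checking the upper semicontinuity of $u$ and its $\mu$-a.e.\ identification with $T_\lambda\circ f$ to be the fiddliest part of this route; with Skorokhod available it is bypassed entirely.
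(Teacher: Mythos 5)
The paper does not actually prove this lemma: it is quoted verbatim from the reference [Yang] with the single sentence ``The following result is shown in \cite{Yang}'', so there is no internal proof to compare against. Judged on its own, your Skorokhod--plus--Vitali argument is correct and complete in the setting where Skorokhod's representation theorem applies (limit measure with separable support, in particular $X$ Polish), which you rightly note covers every use of the lemma in this paper ($X=\Bbb T^n$ or $\Bbb T^n\times\Bbb R^n$). The three steps are sound: the identity $\int_X f_j\,d\mu_j=\Bbb E[f_j\circ Y_j]$ is the image-measure formula; the complement of $E$ is exactly the set of continuous-convergence points of $(f_k)$ to $f$, so $Y_j(\omega)\to Y(\omega)\notin E$ does give $f_j(Y_j(\omega))\to f(Y(\omega))$ a.s.; and the tail identity $\Bbb E[|f_j\circ Y_j|\mathbf 1_{\{|f_j\circ Y_j|>\lambda\}}]=\int_{\{|f_j|>\lambda\}}|f_j|\,d\mu_j$ converts \eqref{eqYang1} into uniform integrability, so Vitali applies. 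This is a clean, conceptually different route from the direct truncation/envelope arguments one would expect in [Yang]; what it buys is brevity, at the price of the separability hypothesis and of invoking two black boxes.

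Two small caveats you should make explicit. First, $E$ as defined by \eqref{eqYang2} need not be Borel, so ``$\mu(E)=0$'' must be read as outer-measure zero (i.e.\ $E$ is contained in a Borel $\mu$-null set $E'$); then $\{Y\in E\}\subset\{Y\in E'\}$ is $\Bbb P$-null and your set $N$ lives in the completed $\sigma$-algebra, after which the argument is unaffected. Second, in your fallback sketch the inequality $\limsup_j\int (T_\lambda\circ f_j)\,d\mu_j\le\int u\,d\mu$ is not the plain portmanteau bound for a fixed upper semicontinuous integrand: the auxiliary functions $u_m(x)=\sup\{(T_\lambda\circ f_k)(y):k\ge m,\ d(y,x)<1/m\}$ dominating $T_\lambda\circ f_j$ for $j\ge m$ are lower (not upper) semicontinuous, so the naive interchange of $\lim_j$ and $\inf_m$ fails and one needs the genuine ``Fatou lemma for weakly converging measures with varying integrands''. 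That statement is true and standard, but it is a lemma in its own right; as written, the sketch should not be presented as a routine consequence of portmanteau. Since you offer it only as an alternative and your main proof stands, this does not affect the verdict.
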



\begin{thebibliography}{9999}    \markboth{References}{}


\bibitem{A}
L. Ambrosio: Transport Equation and Cauchy Problem for Non-Smooth Vector Fields. Calculus of Variations and Nonlinear Partial Differential Equations
Lecture Notes in Mathematics Volume 1927, 2008, pp 1-41.  


\bibitem{A-F-G}
L. Ambrosio, G. Friesecke,  J. Giannoulis:  Passage from Quantum to Classical Molecular Dynamics in the Presence of Coulomb Interactions. 
Communications in Partial Differential Equations, 35: 1490--1515, 2010.







\bibitem{A-F-P}
L. Ambrosio, A. Figalli, G. Friesecke, J. Giannoulis, T. Paul:
 Semiclassical limit of quantum dynamics with rough potentials and well posedness of transport equations with measure initial data, 
arXiv:1006.5388. CPAM

\bibitem{bgmp} C. Bardos, F. Golse, P. Markowich and T. Paul, Hamiltonian Evolution of Monokinetic Measures with Rough Momentum Profile, preprint (2012).

\bibitem{B}
P. Bernard: On the number of Mather measures of Lagrangian systems. Arch. Ration. Mech. Anal. 197(3), 1011"1¤71031 (2010)





\bibitem{C-I-P}
G. Contreras, R. Iturriaga, G. P. Paternain, M. Paternain: {\it Lagrangian graphs, minimizing measures and Mane critical values}, 
Geom. Funct. Anal. {\bf 8} (1998), no. 5, 788--809.

\bibitem{C-L}
M. G. Crandall, P. L. Lions: {\it Viscosity solutions of Hamilton-Jacobi equations}, Trans. Amer. Math. Soc. 277 (1983), 1--42. 


\bibitem{D-K}
J.J. Duistermaat, J.A.C. Kolk, Distributions: Theory and Applications. Birkh\"aser 2010. 


\bibitem{E1}
L.~C.~Evans: {\it Effective Hamiltonians and quantum states}, Seminaire: \'Equations aux D\'eriv\'ees Partielles, 2000-2001, Exp. No. XXII, 13 pp, 
\'Ecole Polytech., Palaiseau, 2001.

\bibitem{E2}
L.~C.~Evans: {\it Towards a quantum analog of weak KAM theory}, Comm. Math. Phys. {\bf 244} (2004), no. 2, 311--334.



\bibitem{E}
L-C. Evans: Further PDE methods for weak KAM theory, Calc. Var. Partial Differential Equations, 35 (2009), no. 4, 435--462

\bibitem{E-G}
L-C. Evans, D. Gomes:  Effective Hamiltonians and Averaging for Hamiltonian Dynamics I,  Arch. Rational Mech. Anal. 157 (2001) 1--33.

\bibitem{F}
A. Fathi: Weak KAM Theorem in Lagrangian Dynamics, Preliminary Version, Number 10 (2008). 

\bibitem{F-G-S}
A. Fathi, A. Giuliani, A. Sorrentino: 
Uniqueness of invariant Lagrangian graphs in a homology or a cohomology class. (English summary) 
Ann. Sc. Norm. Super. Pisa Cl. Sci. (5) 8 (2009), no. 4, 659--680. 


\bibitem{Fo}
G. Folland: Harmonic Analysis in Phase Space, Annals of Mathematics Studies 122, Princeton University
Press. 1989.


\bibitem{G-P}
S. Graffi and T. Paul:  Convergence of a quantum normal form and an exact quantization formula,  Journal of Functional Analysis, 
Volume 262, Issue 7, 1 April 2012, Pages 3340--3393.

\bibitem{Ho}
L. H\"ormander:  The Analysis of Linear Partial Differential Operators, vol I, Springer Verlag (Second Edition). 




\bibitem{L-P}
P-L. Lions; T. Paul:  Sur les mesures de Wigner. (French) [On Wigner measures] Rev. Mat. Iberoamericana 9 (1993), no. 3, 553"1¤7618.


\bibitem{Ma1}
R. Man\`e: {\it On the minimizing measures of Lagrangian dynamical systems}, Nonlinearity {\bf 5}(3) (1992), 623--638. 



\bibitem{M-P-S} 
P. Markowich, T. Paul and C. Sparber, On the dynamics of Bohmian measures. To appear on Archive for Rational Mechanics and Analysis. 




\bibitem{Mar}
A. Martinez: An Introduction to Semiclassical and Microlocal Analysis, Springer Verlag (2002).



\bibitem{M1}
J.N. Mather: {\it Action minimizing invariant measures for positive definite Lagrangian systems}, Math. Z. {\bf 207} (1991), 169--207. 


\bibitem{P-Z}
A. Parmeggiani, L. Zanelli: Wigner measures supported on weak KAM tori. To appear on Journal d'Analyse Mathematiques.

\bibitem{R-S}
M. Reed, B. Simon, Methods of Modern Mathematical Physics, 1: Functional Analysis, Revised and Enlarged version, Academic Press.


\bibitem{R-T} 
M. Ruzhansky, V. Turunen: Quantization of pseudo-differential operators on the torus. J. Fourier Anal. Appl. 16 (2010), no. 6, 943--982

\bibitem{So}
A. Sorrentino:  Lecture notes on Mather's theory for Lagrangian systems. ArXiv: 1011.0590


\bibitem{Yang}
X. Yang:   Integral Convergence Related to Weak Convergence of Measures, Applied Mathematical Sciences,  5 (2011), 2775--2779.


\end{thebibliography}
\end{document}